\theoremstyle{plain}
\newtheorem{prop}{Proposition}[section]
\newtheorem{theorem}[prop] {Theorem}
\theoremstyle{definition}
\newtheorem{lemma}[prop]{Lemma}
\newtheorem{definition}[prop]{Definition}
\newtheorem{cor}[prop]{Corollary}
\newtheorem{assumption}[prop]{Assumption} 
\theoremstyle{remark}
\newtheorem*{remark}{Remark}
\newtheorem{example}{Example}[section]
\newcommand{\N}{\mathbb{N}} 
\newcommand{\R}{\mathbb{R}} 
\newcommand{\Z}{\mathbb{Z}} 
\newcommand{\C}{\mathbb{C}} 
\renewcommand{\P}{\mathbb{P}} 
\newcommand{\E}{\mathbb{E}} 
\newcommand{\Hi}{\mathcal{H}} 
\newcommand{\meas}{\mathfrak{M}} 
\newcommand{\prob}{\mathfrak{M}_{1,+}} 
\newcommand{\dd}{\mathrm{d}} 
\newcommand{\eps}{\varepsilon}
\newcommand{\la}{\langle} 
\newcommand{\ra}{\rangle} 
\newcommand{\trans}{\mathrm{T}} 
\newcommand{\id}{\mathrm{id}} 
\newcommand{\vect}[1]{\boldsymbol{#1}} 
\DeclareMathOperator{\supp}{supp} 
 \DeclareMathOperator {\ran}{ran} 
\DeclareMathOperator{\ex}{ex} 
\newcommand{\noemi}[1]{\textcolor{black}{\textrm{#1}}}
\title{On the notion(s) of duality for Markov processes}
\date{\today}
\author{Sabine Jansen, Ruhr-Universit{\"a}t Bochum, and Noemi Kurt, TU Berlin }
\begin{document}

\maketitle

\begin{abstract}
	We provide a systematic study of the notion of duality of Markov processes with respect to a function. We discuss the relation of this notion with duality with respect to a measure as studied in Markov process theory and potential theory and give functional analytic results including existence and uniqueness criteria and a comparison of the spectra of dual semi-groups. The analytic framework builds on the notion of dual pairs, convex geometry, and Hilbert spaces. In addition, we formalize the notion of pathwise duality as it appears in population genetics and interacting particle systems. We discuss the relation of duality with rescalings, stochastic monotonicity, intertwining, symmetries, and quantum many-body theory, reviewing known results and establishing some new connections. \\
	
{\small \noindent \textit{Keywords}:  Markov processes, duality, semi-groups, cone duality, time reversal, interacting particle systems\\
	\noindent
\emph{MSC 2010 Subject classification:} Primary: 60J25. Secondary: 46N30, 47D07, 60J05.}

\end{abstract}

\tableofcontents
\section{Introduction}
Duality of Markov processes with respect to a duality function has first appeared in the literature in the late 40s and early 50s \cite{levy48, KarlinMcGregor, Lindley}, and has been formalized and generalized over the following decades \cite{Siegmund76, holley-stroock79, CoxRoesler84, CliffordSudbury85, EthierKurtz86, Vervaat88, SudburyLloyd95}. It has since been applied in a variety of fields ranging from interacting particle systems, queueing theory, SPDEs, and superprocesses to mathematical population genetics. In spite of this wide interest and applicability, there are so far few attempts at developing a systematic theory of duality of Markov processes with respect to a function, unlike for the different although related notion of duality with respect to a \emph{measure}, for which there exists a rather complete theory, see \cite{chung-walsh, getoor} for recent surveys. Overviews of the method of duality with respect to a function generally focus on certain aspects or applications to particular fields \cite{DawsonGreven, Liggett05, EthierKurtz86, DiaconisFill, Moehle99, Sudbury, asmussen}, and presentations of the manifold connections to fundamental structures or properties of Markov processes, such as time reversal, stochastic monotonicity, symmetries, or conserved quantities, are often restricted to specific problems. The interest in a general theory of duality has further increased in recent years, but even basic questions such as giving necessary and sufficient conditions for the existence of a dual process of a given Markov process have not yet been fully resolved -- ``finding dual processes is something of a black art'' \cite[p. 519]{etheridge_leshouches}. It has however seen substantial development for example in the work by M\"ohle, illuminating relations with symmetries and conserved quantities \cite{Moehle99, moehle-cones}, and by Giardina, Redig, Kurchan and Vafayi \cite{RedigEtAl09} which presents a deep connection with symmetries and representations of Lie algebras, using quantum mechanics formalisms. On the other side, the lookdown construction \cite{DonnellyKurtz1, DonnellyKurtz2} has triggered new and powerful applications. However, a unified treatment of the theory presenting fundamental connections is still missing. 

The present paper is on the one hand a survey of the notion of duality of Markov processes with respect to a function, and on the other hand also presents new results in this field.  A particular focus is on the question of existence and uniqueness of dual processes (Section~\ref{sec:fa}). These are formulated in functional analytic language on the level of Markovian semi-groups, and relate the problem to the invariance of certain (convex) sets and to the existence of certain unique integral representations via the concept of cone duality. We also formalize the notion of pathwise duality, which is of particular importance in applications (Section~\ref{sec:pathwise}). Moreover, connections with time reversal (duality with respect to a measure, Section~\ref{sec:diagonal}), stochastic monotonicity (Section~\ref{sec:monotonicity}), intertwinings and symmetries (Section~\ref{sec:hilbert}) are discussed.

The aim of the paper is to give an overview of the theoretical background of the concept of duality of Markov processes, and to present fundamental connections in a unified way. We hope that it provides a reference for probabilists applying duality techniques to various situations, who might be interested in the fundamental principles of this theory. We also try to assist understanding of certain results from mathematical physics, Hilbert space theory or quantum mechanics for researchers who might not be familiar with the jargon of these fields. Last but not least, we hope that this article triggers new research in this multifaceted and widely applicable area of probability theory.

\subsection{Setting and definitions}

In the following, $X$ and $Y$ 
are Markov processes with stationary transition probabilities and state spaces $E$ and $F$. 
The state spaces are assumed to be Polish and are endowed with the Borel $\sigma$-algebras $\mathcal{B}(E)$ and $\mathcal{B}(F)$.  
For our purpose a Markov process  with state space $E$ is a collection $X = (\Omega,\mathcal{F},(X_t)_{t\geq 0}, \{\P_x\}_{x\in E})$ consisting of a measure space $(\Omega,\mathcal{F})$, measurable maps $X_t: (\Omega,\mathcal{F})  \to (E ,\mathcal{B}(E))$, and probability measures $\P_x$ on $(\Omega, \mathcal{F})$ such that: 
\begin{itemize} \itemsep0pt 
	\item for all $x\in E$, $X_0 = x$, $\P_x$-a.s.; 
	\item for every Borel-measurable bounded function $f:E\to \R$, the map $x\mapsto \P_x f(X_t)$ is Borel-measurable as well; 
	\item the process satisfies the Markov property with respect to the natural filtration \\
	$\mathcal{F}_t^0 = \sigma( X_s,\, 0 \leq s\leq t)$. 
\end{itemize} 
We do not assume that the strong Markov property holds, and unless explicitly stated otherwise, we do not assume any regularity of the sample paths. Thus our processes have less structure than commonly assumed in the theory of Markov processes, as exposed in classical textbooks such as \cite{dynkin, EthierKurtz86, RogersWilliams}. The reason is that we will only need properties determined by the finite-dimensional distributions. 
The basic concept we are interested in is the following: 

\begin{definition}[Duality with respect to a duality function] \label{def:duality1} 
	Let $X = (\Omega_1,\mathcal{F}_1,(X_t)_{t\geq 0}, \{ \P_x\}_{x\in E} )$ and 
	$Y = (\Omega_2,\mathcal{F}_2,(X_t)_{t\geq 0}, \{ \P^y\}_{y\in F} )$
	 be two Markov processes with respective 
	state spaces $E$ and $F,$ and $H:E\times F\to \R$ a bounded, measurable function. Then $X$ and $Y$ are \emph{dual with respect to $H$} 
	if and only if 
	 for all $x\in E$, $y\in F$ and $t\geq 0$ 
	\begin{equation} \label{eq:duality-def}
		\E_x H(X_t,y) = \E^y H(x,Y_t).
	\end{equation}
\end{definition}
\begin{remark}
Here and throughout this paper we assume boundedness of the duality function $H$ for the sake of simplicity of the exposition. Of course duality can in principle be defined for suitable unbounded functions as well.
\end{remark}
From now on we drop the explicit mention of the underlying measure spaces and simply speak of Markov processes $(X_t)$ and $(Y_t)$. Throughout $\P_x$ refers to the law of $X$ started in $x$ and $\P^y$ to the law of the dual process started in $y$. 

Let  $(P_t)_{t\geq 0}$ and $(Q_t)_{t\geq 0}$ denote the semi-groups of $(X_t)$ and $(Y_t),$ respectively, that is, $P_tf(x)=\E_xf(X_t),$ and similarly for $Q_t.$
A different way of writing the duality formula is 
\begin{equation}\label{eq:duality_semigroup}
P_tH(\cdot,y)(x)=Q_tH(x,\cdot)(y).\end{equation}
Note that duality implies that for every $t,$
\begin{equation}P_tH(\cdot,y)(x)=P_s[P_{t-s}H(\cdot,y)](x)=P_s [Q_{t-s}H(\cdot,\cdot)(y)](x),\quad 0\leq s\leq t,\end{equation}
where in the first equality we have used the Chapman-Kolmogorov equation, and in the second the duality property. Note that $P_t$ always acts of $H$ as a function of the first, $Q_t$ as a function of the second variable. Assume now that $(X_t)$ and $(Y_t)$ have generators $L^X$ and $L^Y$ with domains $D(L^X)$ and $D(L^Y)$ respectively, and that $H(x,\cdot)\in D(L^Y), H(\cdot, y)\in D(L^X).$  Equation \eqref{eq:duality_semigroup} then implies
$$L^X H(\cdot, y)(x)=L^YH(x,\cdot)(y)\;\;x\in E, y\in F.$$ The converse is true as well, under certain conditions:

\begin{prop}\label{prop:duality_generator}
Let $(X_t),(Y_t)$ be Markov processes with generators $L^X, L^Y,$ let $H:E\times F\to\R$ be bounded and continuous. If $H(x,\cdot),P_tH(x,\cdot)\in D(L^Y)$ for all $x\in E, t\geq 0$ and $H,(\cdot, y), Q_tH(\cdot,y)\in D(L^X)$ for all $y\in F, t\geq 0,$ and if
\begin{equation}\label{eq:duality_generators}
L^XH(\cdot, y)(x)=L^YH(x,\cdot)(y)\quad \forall x\in E, y\in F,\end{equation}
then $(X_t)$ and $(Y_t)$ are dual with respect to $H.$
\end{prop}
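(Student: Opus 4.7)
My plan is a standard interpolation argument between the two semigroups. For fixed $x\in E$, $y\in F$, and $t>0$, I would introduce the auxiliary function
$$f(s) := P_s\bigl[x'\mapsto Q_{t-s}H(x',\cdot)(y)\bigr](x), \qquad 0\le s\le t,$$
whose boundary values are $f(0)=Q_tH(x,\cdot)(y)=\E^y H(x,Y_t)$ and $f(t)=P_tH(\cdot,y)(x)=\E_x H(X_t,y)$. Establishing the duality relation \eqref{eq:duality-def} thus reduces to showing that $f$ is constant on $[0,t]$.

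To this end I would differentiate $f$ using the product rule combined with the forward and backward equations for $(Q_s)$ and $(P_s)$. Writing $g_s(x'):=Q_{t-s}H(x',\cdot)(y)$, the hypotheses $H(x',\cdot),\,Q_uH(x',\cdot)\in D(L^Y)$ imply $\partial_s g_s(x')=-Q_{t-s}L^YH(x',\cdot)(y)$, while the hypothesis $Q_uH(\cdot,y)\in D(L^X)$ together with $\tfrac{d}{ds}P_sh=P_sL^Xh$ for $h\in D(L^X)$ gives
$$f'(s)=P_s\bigl[L^Xg_s+\partial_s g_s\bigr](x).$$
Since $L^X$ and $Q_{t-s}$ act on disjoint variables, pulling $L^X$ inside the expectation $\E^y$ and applying the pointwise hypothesis \eqref{eq:duality_generators} yields
$$L^Xg_s(x')=\E^y\bigl[L^XH(\cdot,Y_{t-s})(x')\bigr]=\E^y\bigl[L^YH(x',\cdot)(Y_{t-s})\bigr]=Q_{t-s}L^YH(x',\cdot)(y),$$
which precisely cancels $\partial_s g_s(x')$. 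Hence $f'\equiv 0$, and integrating gives $f(0)=f(t)$, which is exactly \eqref{eq:duality-def}.

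The main obstacle I anticipate is rigorously justifying the interchange $L^X\circ Q_{t-s}=Q_{t-s}\circ L^X$, i.e.\ pulling the generator $L^X$ inside the expectation $\E^y$ defining $Q_{t-s}$. Although this is formally transparent because $L^X$ and $Q_{t-s}$ act on different variables, a careful argument requires either boundedness and continuity of $L^XH(\cdot,y)$ combined with dominated convergence in the difference-quotient representation of $L^X$, or an appeal to closedness of $L^X$ together with the fact that $g_s\in D(L^X)$ by hypothesis. Once this interchange is secured, the measurability and differentiability issues for $f$, as well as the validity of the product rule for the time-dependent integrand, are routine technicalities.
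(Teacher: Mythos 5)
Your proposal is correct, but it follows a genuinely different route from the paper. You run the classical interpolation argument: set $f(s)=P_s\bigl[Q_{t-s}H(\cdot,\cdot)(y)\bigr](x)$, differentiate, and show that the two contributions $P_sL^Xg_s$ and $P_s\partial_sg_s$ cancel thanks to the generator identity \eqref{eq:duality_generators}, so that $f$ is constant and $f(0)=f(t)$ is exactly \eqref{eq:duality-def}. The paper instead shows that $u_1(t)=P_tH(\cdot,y)(x)$ and $u_2(t)=Q_tH(x,\cdot)(y)$ both solve the same abstract Cauchy problem $\tfrac{d}{dt}u=L^Yu$, $u(0)=H$, and concludes by citing the uniqueness theorem for the initial value problem associated with $L^Y$ (Dynkin, Thm.~1.3). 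The trade-off is clear: your argument is self-contained and avoids invoking an external uniqueness theorem, at the price of justifying the differentiability of $s\mapsto P_sg_s$ and the product rule for a time-dependent integrand; the paper's argument is shorter but outsources the real work to the uniqueness result. Both proofs hinge on the same commutation lemma --- a generator acting on one variable commutes with a semi-group acting on the other (the paper needs $P_tL^Y=L^YP_t$, justified by Fubini on $P_tQ_s=Q_sP_t$; you need $L^XQ_{t-s}=Q_{t-s}L^X$) --- and you correctly single this out as the only non-routine technical point. Your use of the hypotheses is also consistent with the statement: $H(x,\cdot)\in D(L^Y)$ gives $\partial_sg_s=-Q_{t-s}L^YH(x',\cdot)(y)$, and $Q_uH(\cdot,y)\in D(L^X)$ gives $g_s\in D(L^X)$, which is what the forward differentiation of $P_s$ requires.
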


\begin{proof}
Let $u_1(t,x,y):= \E_x(X_t,y)=P_tH(x,y),$ and $u_2(t,x,y):= \E^y(x,Y_t)=Q_tH(x,y).$ Note first that by Fubini $P_t Q_sH(x,y)=Q_sP_t H(x,y)$ for $s,t\geq 0, x\in E, y\in F,$ therefore, by our assumptions, $P_t L^Y H(x,\cdot)(y)=L^YP_tH(x,\cdot)(y).$  Using the Kolmogorov forward equation and \eqref{eq:duality_generators}, we get
\begin{equation*}
\begin{split}
\frac{d}{dt}u_1(t,x,y)=&P_t L^X H(\cdot,y)(x)=P_t L^YH(x,\cdot)(y)
=L^Y u_1(t,x,y).
\end{split}
\end{equation*}
Since also $\frac{d}{dt}u_2(t,x,y)=L^Yu_2(t,x,y)$ and $u_1(0,x,y)=H(x,y)=u_2(0,x,y)$ for all $x\in E, y\in F,$ the claim follows from the uniqueness of the solution of the initial value problem associated with $L^Y$ (see \cite[Thm. 1.3]{dynkin}).
\end{proof}

There is another notion of duality. In Section~\ref{sec:diagonal} we shall 
see that we can think of it, roughly, as a specialisation of the previous definition to 
diagonal duality functions. 
The definition is usually given in terms of the \emph{resolvent} $R_\lambda(x,A):=\int_0^\infty \exp(-\lambda t) P_t(x,A) \dd t$, and allows processes with a finite life-time corresponding to sub-Markov semi-groups. 

\begin{definition}[Duality with respect to a measure] \label{def:duality2}
	Let $(X_t)$, $(Y_t)$ be two (sub)-Markov processes  with common  
	state space $E$, semi-groups $(P_t)$, $(Q_t)$, and resolvents $R_\lambda$, $\hat R_\lambda$.
	Then $(X_t)$ and $(Y_t)$ 	are said to be \emph{in duality} with respect to the $\sigma$-finite measure $\mu$ 
	if  (i) for all $\lambda >0$ and all non-negative $f,g\in L^\infty(E)$, 
	\begin{equation} \label{eq:meas-duality}
		\int (R_\lambda f)(x) g(x) \mu(\dd x) 
				= \int f(x) (\hat R_\lambda  g)(x) \mu (\dd x),
	\end{equation}
	and (ii) the resolvents are absolutely continuous with respect to $\mu$, $R_\lambda(x,\cdot) \ll \mu(\cdot)$,$\hat R_\lambda(y,\cdot)\ll \mu(\cdot)$ for all $\lambda>0$ and $x,y\in E$. If only (i) holds, then $(X_t)$ and $(Y_t)$ are said to be \emph{in weak duality} with respect to $\mu$. 
\end{definition}

If $(P_t) = (Q_t)$ and Eq.~\eqref{eq:meas-duality} holds,  then $\mu$ is called a \emph{symmetrizing measure} for $(X_t)$. When $(X_t)$ and $(Y_t)$ have right-continuous paths, we can replace the resolvents in eq.~\eqref{eq:meas-duality} by the semi-groups. 

This type of duality will not be in the focus of this paper, and we refer the reader to \cite{blumenthal-getoor, chung-walsh, getoor} and references therein for detailed accounts of this theory.

\begin{remark}[Time reversal] When $\mu$ is a probability measure and $(X_t)$ and $(Y_t)$ are Markov processes (not only sub-Markov), the previous notion coincides with the usual notion of time reversal with respect to a probability measure $\mu$. Similarly, in this case a symmetrizing probability measure is the same as a reversible measure. 
\end{remark}

\begin{remark}[Feynman-Kac corrections]
Definition \ref{def:duality1} can be generalized in the following way. Let $H:E_1\times E_2\to \R, F:E_1\to\R, G:E_2\to\R$ be bounded, measurable such that $\int_0^T|F(X_t)|\dd t<\infty, \int_0^T |G(Y_t)|\dd t<\infty,$ for $T>0.$ We say that $(X_t), (Y_t)$ are dual with respect to $(H,F,G),$ if for every $x\in E_1, y\in E_2,$
$$\E_x\left[\left| H(X_T,y)\exp\left(\int_0^T F(X_t)\dd t\right)\right|\right]<\infty,$$
$$\E^y\left[\left| H(x,Y_T)\exp\left(\int_0^T G(Y_t)\dd t\right)\right|\right]<\infty,$$
and
$$\E_x\left[ H(X_T,y)\exp\left(\int_0^T F(X_t)\dd t\right)\right]=\E^y\left[ H(x,Y_T)\exp\left(\int_0^T G(Y_t)\dd t\right)\right]$$
(see \cite{EthierKurtz86} for a discussion of such dualities in the context of Martingale problems). In the present article, however, we will not use this more general definition, and restrict ourselves to duality in the sense of \eqref{def:duality1}.
\end{remark}

\subsection{Examples}

We now give some examples of dual processes and typical duality functions, and hint at some applications of this concept. The list of examples is very far from being exhaustive. It is meant as a motivating illustration of the wide use of duality before addressing more theoretical questions. In order to keep the exposition short, we will restrict ourselves to simple examples that don't necessitate much notation, as for example interacting particle systems, and we don't try to find the most general setting for the various types of duality functions.

\begin{example}[Siegmund duality]
Assume $E=F$, and let $\leq$ be a partial order on $E.$ For $x,y\in E$ let $H(x,y):=\mathbf{1}_{\{x\leq y\}}.$ Two processes $(X_t), (Y_t)$ on $E$ are dual with respect to this duality function if and only if 
\begin{equation}
\label{def:siegmund_dual}
\P_x(X_0\leq Y_t)=\P^y(X_t\leq Y_0).
\end{equation}
Of course, exchanging the roles of $X$ and $Y$ we could equivalently choose $H(x,y)=\mathbf{1}_{\{x\geq y\}}.$ This is a classical duality and occurs in many contexts. For example, it was observed by L\'evy (1948) \cite{levy48}, that Brownian motion reflected at 0 and Brownian motion absorbed at 0 are dual with respect to this duality function. It was applied in different fields such as of queuing theory, \cite{Lindley, asmussen}, birth and death processes \cite{KarlinMcGregor}, or interacting particle systems \cite{CliffordSudbury85}. Siegmund \cite{Siegmund76} proved that it holds for stochastically monotone Markov processes (cf. Section \ref{sec:monotonicity}). It is therefore sometimes called ``Siegmund duality'', a name that we will also adopt throughout this paper for duality with respect to $\mathbf{1}_{\{x\leq y\}}$ or $\mathbf{1}_{\{x\geq y\}}.$ This type of duality is related to time reversal in a sense that it reverses the role of entrance and exit laws \cite{CoxRoesler84}, and to other forms of duality such as Wall duality \cite{Dette_etal}, or strong stationary duality \cite{DiaconisFill, Fill92} that will not be discussed here in detail.
\end{example}

\begin{example}[Coalescing dual, interacting particle systems] Duality has found a particularly wide application in the field of interacting particle systems \cite{Spitzer70, HolleyLiggett, Liggett05, Griffeath, Harris}. We focus here on a simple setup.
 Consider $E=\{0,1\}^G$ for some graph $G$.  A partial order on $E$ is given by 
$x_1\leq x_2\Leftrightarrow x_1(i)\leq x_2(i)\forall i\in G.$ Let $(X_t)$ denote the voter model on $G,$ that is the interacting particle system where a particle at site $i$ flips from 0 to 1 with rate given by the number of type 1 neighbours, and from 1 to 0 at rate given by the number of type 0 neighbours. It is well-known that the voter model is dual to a system of coalescing random walks in the following way: Let $F=\{A: A \mbox{ is a finite subset of } G\}.$ We consider the Markov process $(A_t)$ with values in $F$ with dynamics such that each $i\in A$ is removed at rate 1 and replaced by one of its neighbours $j\in G$ if $j\notin A.$ If $j\in A$, then the particles coalesce, that means, $i$ is removed from $A,$ and $j$ remains. Then $(X_t)$ and $(A_t)$ are dual with respect to
$$H(x,A)=\prod_{i\in A}(1-x_i),$$
see \cite{holley-stroock79}. Identifying $(X_t)$ and $(B_t)$ with $B_t=\{i\in G: X_t(i)=1\},$ this can be written as $H(A,B)=\mathbf{1}_{\{A\cap B=\emptyset\}}.$
Conversely $(A_t)$ can as well be  interpreted as a particle system $(Y_t),$ taking values in $E,$ by setting $Y_t(i)=1_{\{i\in A_t\}}.$ Then the duality function becomes
$$H(x,y)=\prod_{i:x_i=1}(1-y_i).$$
This is equivalent to 
\begin{equation}
\label{def:coal_dual}H(x,y)=\mathbf{1}_{\{x\wedge y=\mathbf{0}\}},
\end{equation}
where $x\wedge y$ denotes the component-wise minimum. All of these forms are usually referred to as \emph{coalescing duals}, cf. \cite{Liggett05, SudburyLloyd95}.\\
Note that this special case is actually equivalent to a Siegmund duality:
We have $x\wedge y=\mathbf{0}$ if and only if $x\leq \mathbf 1-y$ componentwise, since $x,y\in\{0,1\}^G.$ Therefore, for spin systems, $(X_t)_t$ and $(Y_t)_t$ are dual with respect to the function (\ref{def:siegmund_dual}) if and only if $(X_t)_t$ and $(\mathbf 1-Y_t)_t$ are dual with respect to (\ref{def:coal_dual}). 
\end{example}

\begin{example}[Moment duality]
 Take $E= [0,1]$ and $F=\N_0,$ and let 
$$H(x,n):=x^n.$$
For obvious reasons, dualities with respect to this function are called moment dualities. There are many examples, such as the classical duality between the Wright-Fisher diffusion and the block-counting process of Kingman's coalescent, which can be easily checked by applying Propostion \ref{prop:duality_generator} the generators given by $L^{WF} f(x) =\frac{1}{2}x(1-x) f''(x), x\in [0,1]$ (generator of the Wright-Fisher diffusion) and $L^{KC} f(n)= {n\choose 2}(f(n-1)-f(n)), n\in\N$ (generator of the block-counting process).\\
The importance of this kind of duality stems of course from the fact that a probability measure on $[0,1]$ is uniquely determined by its moments (Hausdorff moment problem). The notion of moment duality clearly makes sense for any finite subset $E$ of $\R.$  In Appendix B we provide an example of a moment duality with $E=[-1,1],$ compare also Definition \ref{def:q-moment}. Moment duality can, at least formally, be defined for any $E\subseteq \R.$ However, in general this duality might not determine the one-dimensional distributions of the process completely.\\
There are many connections between moment and coalescing duality. The coalescing duality of interacting particle systems can be cast in the form of a moment duality by writing, for $x=(x^i)_{i=1,...,N}, y=(y^i)_{i=1,...,N}\in\{0,1\}^N,$
\begin{equation}1_{\{x\wedge y=0\}}=\prod_{i=1}^N(1-x^i)^{y^i}.\end{equation}
In order to generalize this, we note that 
\begin{equation}\label{eq:coal_moment}\prod_{i=1}^N(1-x^i)^{y^i}=\prod_{i=1}^N(1-\mathbf 1_{\{x^i=y^i=1\}})=\prod_{i=1}^N\left(1-\int_{\{0,1\}}x^i\delta_{y^i}(\dd x^i)\right).\end{equation}
Let now
\begin{equation}
Z_t:=\frac{1}{N}\sum_{i=1}^N\delta_{ Y_t^i},
\end{equation}
and assume that both $(X_t)$ and $(Y_t)$ are \emph{exchangeable} for all $t\geq 0,$ that is, $\mathcal L (X_t^1,...,X_t^N)=\mathcal L(X^{\pi(1)}_t,...,X^{\pi(N)}_t)$ for all permutations $\pi$ of $\{1,...,N\}.$ Then we see from \eqref{eq:coal_moment} that the duality with respect to $H(x,y)=1_{\{x\wedge y=0\}}$ is equivalent to duality with respect to 
\begin{equation}
\tilde{H}(x,z):=\prod_{i=1}^N \int_{\{0,1\}}x^iz(\dd x^i)
\end{equation}
if $z=\frac{1}{N}\sum_{i=1}^N\delta_{y^i}.$ This generalizes moment dualities to the context of measure-valued processes, see for example \cite{DawsonKurtz, BertoinLeGall, DonnellyKurtz1, DawsonGreven, EthierKrone}. Moment dualities can be used to show uniqueness of solutions of Martingale problems, see \cite{EthierKurtz86}, section 4, which has been applied for example in the context of SPDEs, \cite{AthreyaTribe}.
\end{example}

\begin{example}[Annihilating dual]
Going back to the situation $E=F=\{0,1\}^G,$ we can choose 
$$H(x,y):=\mathbf{1}_{\{|x\wedge y|\mbox{ is odd}\}}=\frac{1}{2}(1-(-1)^{|x\wedge y|}),$$
or in the set-valued notation, $H(A,B)=\mathbf 1_{\{|A\cap B|\mbox{ odd }\}}.$
It is well-known that the voter model and annihilating random walk are dual with respect to this duality function \cite{Griffeath, SudburyLloyd95}. Related dualities are used for example in \cite{AthreyaSwart,JansenKurt}.
\end{example}

\begin{example}[Laplace dual] Here, the duality function is of the form $e^{-\langle x,y\rangle},$ where $\langle\cdot,\cdot\rangle$ denotes a scalar product, or more generally just a bilinear form. This duality is related to moment dualities, variants of it have been used in order to prove uniqueness of a solution of certain SPDE \cite{Mytnik1}. 
\end{example}

\section{Duality with respect to a measure} \label{sec:diagonal}

In this section we elucidate the relationship between duality in the sense of Def.~\ref{def:duality1} and duality in the sense of Def.~\ref{def:duality2}. 
The starting point is the following observation: if $(X_t)$ is a Markov chain with discrete state space and a reversible probability measure $\mu$, then $(X_t)$ is self-dual with duality function $H(x,y):= \mu(x)^{-1} \delta_{x,y}$, provided $\mu(x) >0$ for all $x\in E$.  This is the ``cheap'' self-duality function of~\cite{RedigEtAl09}, where it serves as the starting point for more interesting dualities. Propositions~\ref{prop:diagonal-duality} and \ref{prop:diagonal-polish} address the situation when the measure $\mu$  does not have full support or the state space is not discrete. We refer the reader to \cite[Section 5]{DiaconisFill}, \cite[Section 5.1]{carmona-petit-yor98}, and \cite{krs07} for further comparisons of dualities.

We start with an example in discrete state spaces. Fix $L\in \N$ and consider a simple random walk on $\{0,1,\ldots,L+1\}$ with absorption at $0$ and $L+1$. Any reversible measure $\mu$ must be invariant, hence satisfies $\mu(x) = 0$ for $x=1,\ldots,L$. On the other hand, a straight-forward computation shows that the diagonal function 
$H(x,x)=1$ if $x=1,\ldots,L$, $H(0,0)=H(L+1,L+1)=0$, and $H(x,y) =0$ if $x\neq y$, is a self-duality function. Clearly $H(x,x)$ is not of the form $1/\mu_\mathrm{rev}(x)$ for a reversible measure $\mu_\mathrm{rev}$. Nevertheless, the measure $\mu(x) = 1/H(x,x) =1$ on $\{1,\ldots,L\}$ is a duality measure for a sub-Markov process, namely, the simple random walk killed at the boundary. Note that a duality measure for strictly sub-Markov kernels need not be invariant, but only \emph{excessive}, $(\mu P)(y) \leq \mu(y)$. 

\begin{remark}
The previous example should be seen in the original context of the notion of duality with respect to a measure~\cite{hunt57}: the initial motivation came from potential theory, where   Markov processes  killed upon reaching the boundary of the domain play an important role.
\end{remark}

The next proposition generalizes the relation explained in the previous example. 
The key subtlety concerns the invariance of subsets of the state space: if $\mu$ is a reversible measure of a Markov process $(X_t)$, then $(X_t)$ cannot leave $\supp \mu$ -- it can, however, jump from $E\backslash \supp \mu$ into $\supp \mu$. The situation is the other way round if $H(x,x)$ is a diagonal self-duality function of $(X_t)$: in this case $(X_t)$ may very well leave $\supp H$, but it cannot leave $E\backslash \supp H$, which is therefore a trap for $(X_t)$. 

\begin{prop}\label{prop:diagonal-duality}
	Let $(X_t)$ and $(Y_t)$ be Markov processes with identical discrete state space $E$. 
	\begin{enumerate} 
		\item Suppose that $(X_t)$ and $(Y_t)$ are dual with respect to a diagonal duality function $H(x,x)$. Then $|H(x,x)|$ is a duality function too. Define the measure 
		\begin{equation} \label{eq:muh}
			\mu(x):= \begin{cases} 
					1/|H(x,x)|, &\quad H(x,x)\neq 0,\\ 
					0, & \quad H(x,x) =0.
				\end{cases}
		\end{equation} 
		Then $E\setminus \supp \mu$ is a trap for $(X_t)$ and $(Y_t)$, and the sub-Markov processes 
		\begin{equation} \label{eq:hatxt}
			\hat X_t:= \begin{cases} 
					X_t,&\quad X_t \in \supp \mu,\\ 
					\dagger, &\quad X_t \notin \supp \mu, 
				\end{cases}, \qquad 
		\hat Y_t:= \begin{cases} 
					Y_t,&\quad Y_t \in \supp \mu,\\ 
					\dagger, &\quad Y_t \notin \supp \mu.  
				\end{cases} 
		\end{equation} 
		are in duality with respect to the measure $\mu$. 

		\item 	Conversely, let $\mu$ be a measure on $E$ and let $H(x,x) \geq 0$ be the diagonal function given by Eq.~\eqref{eq:muh}. Suppose that $E\setminus \supp \mu$ is a trap for $(X_t)$ and $(Y_t)$, and $(\hat X_t)$ and $(\hat Y_t)$ defined as in Eq.~\eqref{eq:hatxt} are in duality with respect to $\mu$. Then $(X_t)$ and $(Y_t)$ are dual with respect to $H$. 
	\end{enumerate} 
\end{prop}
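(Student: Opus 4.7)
The plan is to reduce the abstract duality identity to an elementary pointwise identity for the transition kernels. In the discrete setting, writing the diagonal function as $H(x,y) = H(x,x)\delta_{x,y}$, the duality $\E_x H(X_t,y) = \E^y H(x,Y_t)$ of $(X_t)$ and $(Y_t)$ with respect to $H$ collapses to the pointwise identity
\[
P_t(x,y)\, H(y,y) \;=\; Q_t(y,x)\, H(x,x), \qquad x,y \in E, \; t \geq 0,
\]
and both parts of the proposition will follow from this identity together with the non-negativity of the transition kernels.

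For part (1), the claim that $|H|$ is also a duality function is immediate by taking absolute values in the pointwise identity. To show that $E \setminus \supp \mu = \{z : H(z,z) = 0\}$ is a trap for $(X_t)$, I fix $y \in E \setminus \supp \mu$ and $x \in \supp \mu$ and apply the identity at the pair $(y,x)$, obtaining $P_t(y,x)\, H(x,x) = Q_t(x,y)\, H(y,y) = 0$. Since $H(x,x) \neq 0$, this forces $P_t(y,x) = 0$, so $(X_t)$ cannot escape $\{H = 0\}$; the analogous argument handles $(Y_t)$. Inside $\supp \mu$ the killed processes agree with the original ones, so dividing the identity (applied to $|H|$) through by $|H(x,x)\, H(y,y)| > 0$ on $\supp \mu \times \supp \mu$ produces the detailed-balance-like relation $\hat P_t(x,y)\, \mu(x) = \hat Q_t(y,x)\, \mu(y)$. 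Multiplying by $e^{-\lambda t}$, integrating in $t$, and pairing with bounded non-negative $f, g$ recovers Eq.~\eqref{eq:meas-duality}.

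For part (2), I verify the pointwise identity case by case. When $x, y \in \supp \mu$, the trap assumption gives $\hat P_t(x,y) = P_t(x,y)$ and $\hat Q_t(y,x) = Q_t(y,x)$, and the assumed duality with respect to $\mu$ reads $P_t(x,y)\, \mu(x) = Q_t(y,x)\, \mu(y)$, which rearranges into the desired identity via $H(z,z) = 1/\mu(z)$. If exactly one of $x, y$ lies outside $\supp \mu$, one side of the identity vanishes because $H$ is zero there, while the other side vanishes because the trap property forces the relevant transition probability to be zero. If both points are outside $\supp \mu$, both sides are zero.

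The main obstacle is the trap statement in part (1): seeing it requires applying the pointwise duality identity at $(y,x)$ rather than $(x,y)$, exploiting the diagonal structure to convert the vanishing of $H$ on one side of the equation into the vanishing of a forward transition probability on the other. Once the trap structure is isolated, the remaining steps are routine bookkeeping, and the passage between the resolvent form of Eq.~\eqref{eq:meas-duality} and the pointwise relation $\hat P_t(x,y)\,\mu(x) = \hat Q_t(y,x)\,\mu(y)$ is handled by the standard Laplace-uniqueness argument on the discrete state space.
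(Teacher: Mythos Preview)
Your proposal is correct and follows essentially the same approach as the paper: both reduce everything to the pointwise identity $P_t(x,y)\,H(y,y)=Q_t(y,x)\,H(x,x)$, derive the trap property from it, and handle part~2 by the same four-case analysis on whether $x,y$ lie in $\supp\mu$. Your argument that $|H|$ is a duality function---simply take absolute values and use $P_t,Q_t\geq 0$---is in fact more direct than the paper's, which instead first argues that if $H(x,x)H(y,y)<0$ then both transition probabilities must vanish before concluding the same thing.
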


\begin{proof}
	We start with the proof of 2. 
	We need to check that for all $x,y\in E$ and all $t\geq 0$, 
	\begin{equation} \label{eq:hdidu}
		P_t(x,y)H(y,y) = Q_t(y,x) H(x,x).
	\end{equation}
	The weak duality of $(\hat X_t)$ and $(\hat Y_t)$ with respect to the measure $\mu$ tells us that 
	\begin{equation*} 
		\forall x,y\in \supp \mu:\ \mu(x)P_t(x,y) = \mu(y) Q_t(y,x). 
	\end{equation*} 
	Dividing by $\mu(x)$ and $\mu(y)$ on both sides, we find that Eq.~\eqref{eq:hdidu} holds when $x$ and $y$ are both in $\supp \mu$. If $x$ and $y$ are both outside $\supp \mu$, Eq.~\eqref{eq:hdidu} obviously holds since in this case $H(x,x) = H(y,y)=0$. 
	If $x\in \supp \mu$ but $y\notin \supp \mu$, then $H(y,y)=0$ and, because $(Y_t)$ cannot leave $E\setminus \supp \mu$, $Q_t(y,x) =0$. Thus Eq.~\eqref{eq:hdidu} holds. The symmetric case $x\notin \supp \mu$, $y \in \supp \mu$ can be treated in a similar way. 
	
	Proof of 1.: 
	We know that Eq.~\eqref{eq:hdidu} holds for all $x,y\in E$. Therefore, if $H(y,y) = 0$ but $H(x,x) \neq 0$, we have $Q_t(y,x) =0$. Thus $(Y_t)$ cannot leave $E\setminus \supp \mu = \{y\mid H(y,y) =0\}$ and $(\hat Y_t)$ gives a well-defined Sub-Markovian process; the analogous statement for $(X_t)$ follows in a similar way. Moreover, when $x$ and $y$ are both in $\supp \mu$, we obtain from Eq.~\eqref{eq:hdidu} that 
	\begin{equation} \label{eq:hdu}
		\forall x,y\in \supp \mu:\  H(x,x)^{-1} P_t(x,y) = H(y,y)^{-1} Q_t(y,x)
	\end{equation}
	which proves the claim when $H$ is non-negative. 
	Next, suppose that there are $x$ and $y$ such that $H(x,x)H(y,y)<0$, i.e., $H(x,x)$ and $H(y,y)$ have opposite signs. Then Eq.~\eqref{eq:hdidu} shows that for all $t$, $P_t(x,y)=0$ and $Q_t(y,x) =0$. 
	Put differently, in Eq.~\eqref{eq:hdidu} either both sides have the same sign or both sides vanish; as a consequence we can take absolute values and deduce that $|H(x,x)|$ is a diagonal duality function. 
\end{proof} 

Next let us look at non-discrete state spaces. In this case expressions such as $1/\mu(x)$ do not make sense and it is not clear what a good diagonal function $H$ would be. A way around this issue is to replace the candidate diagonal duality function by a family of functions $H_\lambda$ such that, formally, as $\lambda\to \infty$, the family converges to the correct object -- much in the same way as a Dirac measure can be approximated by a family of Gaussian functions with variances going to~$0$. To this aim recall the following: in finite state spaces, for strongly continuous Markov chains, the resolvent $R_\lambda$ satisfies $\lambda R_\lambda \to \id$ as $\lambda \to \infty$. Thus if we set $H_\lambda(x,y) = \lambda R_\lambda(x,y)/\mu(y)$, then $H_\lambda \to \mathrm{diag}(1/\mu(x))$. For non-discrete state spaces, $1/\mu(x)$ does not make sense as a function, but $R_\lambda(x,y)/\mu(y)$ can be interpreted as a Radon-Nikod{\'y}m derivative: if $(X_t)$ and $(\hat X_t)$ are in duality with respect to $\mu$, there is a function $r_\lambda(x,y)$ such that the resolvents satisfy
\begin{equation} \label{eq:rlambda} 
	R_\lambda(x,\dd y) = r_\lambda(x,y) \mu(\dd y),\quad \hat R_\lambda(x,\dd y) = 	 r_\lambda(y,x) \mu(\dd y). 
\end{equation} 
for all $x,y \in E$. Moreover we may assume that for all $\lambda$ the functions $x\mapsto r_\lambda(x,y)$ 
and $y\mapsto r_\lambda(x,y)$ are $\lambda$-excessive for the resolvents $(R_\alpha)_{\alpha>0}$ and $(\hat R_\alpha)_{\alpha>0}$ respectively,\footnote{A non-negative measurable function $f$ is $\lambda$-excessive for $(R_\alpha)$ if (i) for all $\beta>0$, $\beta R_{\lambda + \beta} f \leq f$, and (ii) $\beta R_{\lambda+ \beta} f \to f$ as $\beta \to \infty$ pointwise on $E$, see e.g.~\cite{blumenthal-getoor}[Chapter III.4].} and under this assumption the functions $r_\lambda(x,y)$ are uniquely determined by the resolvents \cite{blumenthal-getoor}[Chapter VI.1].


\begin{prop} \label{prop:diagonal-polish}
	Let $\mu$ be a $\sigma$-finite measure on $E$. Let $(X_t)$ and $(\hat X_t)$ be Markov processes with c{\`a}dl{\`a}g paths. Suppose that $(X_t)$ and $(\hat X_t)$ are in duality with respect to $\mu$ and let $r_\lambda:E\times E\to [0,\infty)$ be the unique function that satisfies Eq.~\eqref{eq:rlambda} and turns $x\mapsto r_\lambda(x,y)$ and $y\mapsto r_\lambda(x,y)$ into $\lambda$-excessive functions for $(R_\alpha)$ and $(\hat R_\alpha)$ respectively. Then 
	\begin{equation} \label{eq:rlambda-dual}  
		\E_x [r_\lambda(X_t,y)] = \E^y[ r_\lambda(x,\hat X_t)]
	\end{equation} 
	for all $\lambda>0$, $x,y \in E$ and $t>0$, i.e., every $r_\lambda$ is a duality function for $(X_t)$ and $(Y_t)$. 
\end{prop}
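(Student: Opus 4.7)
The plan is to reduce the desired identity~\eqref{eq:rlambda-dual} to an identity between Laplace transforms in $t$, which in turn will follow from the resolvent equation applied at the level of densities. Since the resolvent densities are non-negative, Fubini together with the density representation~\eqref{eq:rlambda} gives, for any $\alpha>0$,
\begin{equation*}
\int_0^\infty e^{-\alpha t}\, \E_x[r_\lambda(X_t,y)]\, \dd t \;=\; R_\alpha r_\lambda(\cdot,y)(x) \;=\; \int_E r_\alpha(x,z)\, r_\lambda(z,y)\, \mu(\dd z),
\end{equation*}
and symmetrically
\begin{equation*}
\int_0^\infty e^{-\alpha t}\, \E^y[r_\lambda(x,\hat X_t)]\, \dd t \;=\; \hat R_\alpha r_\lambda(x,\cdot)(y) \;=\; \int_E r_\alpha(z,y)\, r_\lambda(x,z)\, \mu(\dd z).
\end{equation*}
The whole problem is thus to show that these two integrals coincide.

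Next, for $\alpha\neq\lambda$ I would invoke the resolvent equation $(\lambda-\alpha)R_\alpha R_\lambda = R_\alpha - R_\lambda$, tested against indicators of sets of finite $\mu$-measure, to identify densities:
\begin{equation*}
\int_E r_\alpha(x,z)\, r_\lambda(z,y)\, \mu(\dd z) \;=\; \frac{r_\alpha(x,y) - r_\lambda(x,y)}{\lambda - \alpha},
\end{equation*}
initially only $\mu$-a.e.\ in $y$, and then pointwise by the uniqueness of $\lambda$-excessive versions which is built into the hypothesis on $r_\lambda$. Running the analogous computation for the dual pair $(\hat R_\alpha,\hat R_\lambda)$ produces exactly the same right-hand side for the second Laplace transform, so both Laplace transforms agree for every $\alpha>0$ with $\alpha\neq\lambda$.

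Finally, to recover~\eqref{eq:rlambda-dual} pointwise in $t$, I would observe that both maps $t\mapsto \E_x[r_\lambda(X_t,y)]$ and $t\mapsto \E^y[r_\lambda(x,\hat X_t)]$ are non-negative and finite (using the $\lambda$-supermedian bound $e^{-\lambda t} P_t r_\lambda(\cdot,y)\le r_\lambda(\cdot,y)$ that follows from excessiveness, and analogously for the dual) and right-continuous in $t$ thanks to the c{\`a}dl{\`a}g paths of $(X_t)$ and $(\hat X_t)$. Uniqueness of the Laplace transform on such functions then forces the two maps to coincide for every $t>0$, which is exactly~\eqref{eq:rlambda-dual}.

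I expect the genuinely subtle point to be the promotion of the density identity in the second paragraph from $\mu$-almost-everywhere to pointwise in $(x,y)$; this is precisely why the statement singles out the distinguished $\lambda$-excessive versions of the densities, and it is the only place in the argument where one truly needs the Blumenthal--Getoor uniqueness theorem. The Fubini manipulations, the algebraic application of the resolvent equation, and the Laplace inversion using right-continuity are by contrast routine.
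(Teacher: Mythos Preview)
Your argument is correct and follows a genuinely different route from the paper's. The paper first upgrades the resolvent duality to semi-group duality $\int (P_t f) g\, \dd\mu = \int f (\hat P_t g)\, \dd\mu$ (this is where the c{\`a}dl{\`a}g assumption enters), then fixes $t$ and tests Eq.~\eqref{eq:rlambda-dual} against $f(x)g(y)\,\mu(\dd x)\mu(\dd y)$: both sides become $\la f, P_t R_\lambda g\ra_{L^2(\mu)} = \la f, R_\lambda P_t g\ra_{L^2(\mu)}$, giving equality $\mu\otimes\mu$-a.e., which is then promoted to pointwise via excessiveness in each variable separately. You instead take Laplace transforms in $t$ and stay entirely at the resolvent level, reducing everything to the pointwise density resolvent equation from Blumenthal--Getoor~VI.1; this is more in the spirit of classical potential theory and arguably cleaner.

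One small remark on your final step: the right-continuity of $t\mapsto P_t r_\lambda(\cdot,y)(x)$ does not really come from the c{\`a}dl{\`a}g paths (indeed $r_\lambda(\cdot,y)$ need not be continuous, so composing with a c{\`a}dl{\`a}g path does not directly help). It follows instead from $\lambda$-excessiveness alone: the map $t\mapsto e^{-\lambda t}P_t r_\lambda(\cdot,y)(x)$ is non-increasing by the supermedian property, and the regularization condition $\beta R_{\lambda+\beta}r_\lambda(\cdot,y)\to r_\lambda(\cdot,y)$ forces its right limit at each $t$ to be attained (by monotone convergence through the semi-group identity). A pleasant consequence is that your proof in fact does not use the c{\`a}dl{\`a}g hypothesis at all, whereas the paper's argument genuinely needs it to pass from resolvent duality to semi-group duality.
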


The proof uses that  two $\lambda$-excessive functions that are equal $\mu$-almost everywhere are in fact equal everywhere~\cite{blumenthal-getoor}\footnote{We thank the anonymous referee for pointing out the relevance of the arguments in~\cite{blumenthal-getoor}[Chapter VI].}. It is analogous to the proof of Theorem~1.16 in~\cite{blumenthal-getoor}[Chapter VI.1], which shows that  under additional regularity assumptions, that in Eq.~\eqref{eq:rlambda-dual} we may replace $t$ by the hitting times $T_B$ and $\hat T_B$ of Borel sets $B\subset E$.

\begin{proof}[Proof of Prop.~\ref{prop:diagonal-polish}]
	We note first that if the processes have c{\`a}dl{\`a}g paths, then for all $t>0$ and all non-negative, bounded functions $f$ and $g$, 
	\begin{equation} \label{eq:pt-dual} 
		\int_E (P_t f)(x) g(x) \mu(\dd x) = \int_E f(x) (P_t g)(x) \mu(\dd x). 
	\end{equation} 
	Indeed, let $L(t)$ and $R(t)$ be the left and right-hand sides of this equation. The definition of duality with respect to $\mu$ says that $L$ and $R$ have the same Laplace transforms. As a consequence, $L(t) = R(t)$ for Lebesgue-almost all $t>0$. Now, since $(X_t)$ has c{\`a}dl{\`a}g paths, dominated convergence shows that for bounded continuous  functions $f$, and every $x\in E$, the map $[0,\infty) \ni t \mapsto (P_t f)(x) = \E_x f(X_t)$ is c{\`a}dl{\`a}g as well; similarly for $(\hat P_t g)(x)$. Thus for non-negative, bounded, continuous $f$ and $g$, $L(t)$ and $R(t)$ are c{\`a}dl{\`a}g, and equality Lebesgue-almost everywhere implies equality for all $t>0$.  Eq.~\eqref{eq:pt-dual} holds for continuous $f$ and $g$ and extends to bounded non-negative $f$ and $g$ by a density argument. 
	
	Now let $f,g\in L^\infty(E)$, non-negative. We integrate Eq.~\eqref{eq:rlambda-dual} against $f(x) g(y) \mu(\dd x) \mu(\dd y)$. The left-hand side becomes, using $r_\lambda(x,y) \mu(\dd y) = R_\lambda(x,\dd y)$, 
	\begin{align*} 
		\int P_t(x,\dd x') r_\lambda(x',y) f(x) g(y) \mu(\dd y) 
			& = \int\mu(\dd x) f(x)  P_t(x,\dd x') R_\lambda( x',\dd y) g(y)  \\
			& = \int  \mu(\dd x) f(x) \bigl( P_t R_\lambda g)(x)=\la f, P_t R_\lambda g\ra_{L^2(E,\mu)}. 
	\end{align*} 
	Similarly, the right-hand side becomes, using $r_\lambda(x,y) \mu(\dd x) = \hat R_\lambda(y, \dd x)$ and the duality of the semi-groups with respect to $\mu$, 
	\begin{equation*} 
		\int \mu( \dd y) g(y) \bigl( \hat P_t \hat R_\lambda f)(y) = \la \hat P_t \hat R_\lambda f, g \ra_{L^2(E,\mu)} = \la f, R_\lambda P_t g\ra_{L^2(E,\dd \mu)}. 
	\end{equation*}
	Thus the left-hand side and the right-hand side of Eq.~\eqref{eq:rlambda-dual}, integrated against $f(x) g(y) \mu(\dd x) \mu(\dd y)$, are equal, for all non-negative $f,g\in L^\infty(E)$. It follows that for all non-negative $f$ and $\mu$-almost all $x\in E$, 
\begin{equation*}
	 \int_E \Bigl( \int_E  P_t(x,\dd x') r_\lambda(x',y)\Bigr)  g(y) \mu(\dd y) 
	= \int_E \Bigl( \int_E P_t(y,\dd y') r_\lambda(x,y')  \Bigr) g(y) \mu(\dd y).
\end{equation*}
Both sides are $\lambda$-excessive functions of $x$, therefore the functions agree for all $x$. It follows that Eq.~\eqref{eq:rlambda-dual} holds for all $x$ and $\mu$-almost all $y$; the identity extends to all $y\in E$ by using again $\lambda$-excessivity. 
\end{proof}

\section{Functional analytic theory} \label{sec:fa}

This section presents an analytic framework for duality of Markov processes. In Prop.~\ref{prop:bilin}, duality is recast as duality of operators with respect to a bilinear form; convex geometry enters in Section \ref{sec:cones}. As an application, we present abstract criteria for the existence and uniqueness of a dual to a given process with respect to a given duality function $H$. The main results are Propositions ~\ref{prop:uniqueness} and~\ref{prop:gen-form}, and Theorems~\ref{prop:non-deg} and  \ref{prop:lift}. We also give a  criterion under which the dual of a Feller semi-group is a Feller semi-group (Theorem~\ref{thm:feller}) and show that reversible processes with non-degenerate duality functions have the same spectrum (Theorem~\ref{thm:unitary}).

All results of this section can be formulated in terms of the semi-groups only, and all Markov semi-groups will be on Polish spaces equipped with their Borel $\sigma$-algebra. By Markov semi-group we mean a family of kernels $P_t(x,A)$, $t\geq 0$, such that for all $x$, $P_0(x,\cdot) = \delta_x$,  $x\mapsto P_t(x,A)$ is Borel-measurable for all measurable $A$, $P_t(x,\cdot) =1$ is a probability measure, and the Chapman-Kolmogorov equations hold. 
We do not assume any additional regularity such as existence of realizations with c{\`a}dl{\`a}g paths. We write $\meas(E)$ for bounded signed measures and $\prob(E)$ for probability measures on $E$. 

\subsection{Duality of semi-groups with respect to a bilinear form. Uniqueness}

The natural setting for duality of Markov processes are  dual pairs as encountered in the treatment of weak topologies and locally convex spaces \cite[Chapter V.7]{reed-simon}.

\begin{definition} [Left and right null spaces, non-degenerate bilinear form]
	Let $V,W$ be vector spaces and $B:V\times W \to \R$ a bilinear form. 
	The \emph{left} and \emph{right null spaces} of $B$ are the subspaces  
	\begin{align*}
		\mathcal{N}_\mathrm{L} = \{ x \in V \mid B(x,\cdot) = 0 \}, \quad 
		\mathcal{N}_\mathrm{R} = \{ y \in W \mid B(\cdot,y) = 0 \}. 
	\end{align*}
	The form $B$ is called \emph{non-degenerate} if $\mathcal{N}_\mathrm{L} = \{0\}$ 
	and $\mathcal{N}_\mathrm{R}=\{0\}$. 
\end{definition} 

When $B(\cdot,\cdot)$ is non-degenerate, it is common to use a scalar product notation $B(\cdot,\cdot) = \la \cdot,\cdot\ra$, and the triple $(V,W,\la\cdot,\cdot\ra)$ is referred to as a \emph{dual pair}. 

\begin{definition}[Duality of operators with respect to a bilinear form] 
	Let $V,W$ be vector spaces, $B:V\times W \to \R$ a  bilinear form 
	and $T:V \to V$, $S: W\to W$ linear operators. 
	Then $T$ is called \emph{dual} to $S$ with respect to $B$ if 
	\begin{equation*}
		\forall f \in V,\, g\in W:\ B(Tf, g) = B(f,S g). 
	\end{equation*}
\end{definition}


The dual with respect to a non-degenerate form, if it exists, is unique. Standard examples include the usual dual spaces, Hilbert spaces, and the pairing between measures and functions.

\begin{example}[``The'' dual] Let $X$ be a Banach space and $X'$ the dual space, i.e., the space of continuous linear functionals from 
	$X$ to $\R$. Then $\la \varphi,x\ra := \varphi(x)$ defines a non-degenerate bilinear form on $X'\times X$. Every bounded operator $S:W\to W$ has a dual operator $S'$, often simply called ``the'' dual, sometimes also the \emph{Banach space adjoint}. 
\end{example}

\begin{example}[Adjoint operator in a Hilbert space] 
	Let $\mathcal{H}$ some real Hilbert space. The scalar product $\la \cdot,\cdot\ra$ is a non-degenerate bilinear form on $\Hi \times \Hi$, and every bounded operator $A$ has a unique dual operator $A^*$, the (Hilbert space) \emph{adjoint} of $A$. 
\end{example}

\begin{example}[Pairing of measures and functions]
	Let $E$ be a Polish space, endowed with its Borel $\sigma$-algebra, $L^\infty(E)$ the bounded measurable functions  and $\meas(E)$ the space of finite signed measures. We equip $L^\infty(E)$ with the supremum norm $||\cdot||_\infty$ and the signed measures with the total variation norm 
	\begin{equation} \label{eq:totvar}
		||\mu||:= \sup_A \Bigl( |\mu(A)| +|\mu(E\backslash A)|\Bigr) .
	\end{equation}
	 Then $\la \mu, f\ra:= \int_E f\dd \mu$ defines a non-degenerate bilinear form on $\meas(E)\times L^\infty(E)$. Let $P(x,A)$ be a transition kernel, acting on functions via $(Pf)(x) = \int_E P(x,\dd x') f(x')$ and on measures via $(P^* \mu)(A) = (\mu P)(A) = \int_E \mu(\dd x) P(x,A)$. $P$ and $P^*$ are dual with respect to $\la \cdot,\cdot\ra$. 
\end{example}


\begin{prop}[Duality of Markov processes as duality of operators] \label{prop:bilin}
	Let  $(X_t)$, $(Y_t)$ be Markov processes with Polish state spaces $E$,$F$, and 
	semi-groups 	$(P_t)$, $(Q_t)$. Let $H:E\times F \to \R$ be a bounded measurable function 
	and 
	\begin{equation*}
		B_H(\mu,\nu):= \int_{E\times F} H(x,y) \mu(\dd x) \nu(\dd y).  
	\end{equation*}
	Then $(X_t)$ and $(Y_t)$ are dual with respect to  $H$ if and only if for all 
	$t >0$, 
	\begin{equation*}
		\forall \mu \in \mathfrak{M}(E),\ \forall \nu \in \mathfrak{M}(F):\ 
			B_H(P_t^* \mu,\nu) = B_H(\mu, Q_t^* \nu). 
	\end{equation*}
	i.e., if and only if for all $t>0$, $P_t^*$ and $Q_t^*$ are dual with respect to $B_H$. 
\end{prop}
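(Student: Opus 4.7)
The plan is to show that both sides of the claimed identity, when written out, reduce to the pairwise duality equation $(P_t H(\cdot,y))(x) = (Q_t H(x,\cdot))(y)$ tested against measures. Unfolding the definitions and applying Fubini's theorem (which is justified because $H$ is bounded and $\mu,\nu$ are finite signed measures on Polish spaces), I would compute
\begin{equation*}
B_H(P_t^* \mu,\nu) = \int_F \nu(\dd y) \int_E (P_t^*\mu)(\dd x)\, H(x,y) = \int_{E\times F} (P_t H(\cdot,y))(x)\, \mu(\dd x)\, \nu(\dd y),
\end{equation*}
and symmetrically
\begin{equation*}
B_H(\mu, Q_t^*\nu) = \int_{E\times F} (Q_t H(x,\cdot))(y)\, \mu(\dd x)\, \nu(\dd y).
\end{equation*}

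For the forward direction, assume $(X_t)$ and $(Y_t)$ are dual with respect to $H$. Then the integrands in the two displays above agree pointwise for every $x \in E$, $y \in F$, so integrating against the (finite, by boundedness of $H$) product measure gives $B_H(P_t^*\mu,\nu) = B_H(\mu, Q_t^*\nu)$ for all $\mu \in \meas(E)$ and $\nu \in \meas(F)$.

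For the reverse direction, I would test the bilinear identity against Dirac measures $\mu = \delta_x$ and $\nu = \delta_y$, which both lie in the respective spaces of bounded signed measures. Since $P_t^* \delta_x = P_t(x,\cdot)$ and $Q_t^* \delta_y = Q_t(y,\cdot)$, the two computations above specialise to $(P_t H(\cdot,y))(x)$ and $(Q_t H(x,\cdot))(y)$, respectively, thereby recovering Definition~\ref{def:duality1} in the semi-group form~\eqref{eq:duality_semigroup}.

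There is no real obstacle here; the statement is essentially a bookkeeping exercise. The only thing to check carefully is the applicability of Fubini, which is immediate because $H$ is bounded and measurable and $\mu \otimes \nu$ is a finite signed measure on the product space $E \times F$ equipped with the product Borel $\sigma$-algebra (using that $E,F$ are Polish, so the Borel $\sigma$-algebra on $E\times F$ coincides with the product $\sigma$-algebra).
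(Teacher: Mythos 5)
Your proposal is correct and follows essentially the same route as the paper: unfold $B_H(P_t^*\mu,\nu)$ via Fubini into an integral of $\E_x H(X_t,y)$ against $\mu\otimes\nu$, use pointwise duality for the forward direction, and test against Dirac masses $\mu=\delta_x$, $\nu=\delta_y$ for the converse. Your explicit justification of Fubini (boundedness of $H$, finiteness of the signed measures, coincidence of the Borel and product $\sigma$-algebras on $E\times F$ for Polish spaces) is a welcome addition that the paper leaves implicit.
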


\begin{proof}
	By definition, $\int f(x) (P_t^*\mu)(\dd x) = \E_\mu f(X_t)= \int [\E_x f(X_t)] \mu(\dd x)$. 
	If the processes are dual with respect to $H$, then 
	\begin{align*}
		B_H(P_t^*\mu,\nu) & =\int H(x,y) (P_t^*\mu) (\dd x) \nu(\dd y) \\
			& = \int \Bigl( \E_x H(X_t,y) \Bigr)  \mu(\dd x) \nu (\dd y) \\
			& = \int \Bigl( \E^y H(x, Y_t) \Bigr) \mu (\dd x) \nu (\dd y) \quad \text{by 
				Eq.~\eqref{eq:duality-def} }\\
			& = \int H(x,y) \mu(\dd x) (Q_t^* \nu) (\dd y) = B_H(\mu, Q_t^* \nu).  
	\end{align*}
	Conversely, if the semi-groups are dual with respect to the bilinear form $B$, 
	then for $\mu = \delta_x$ , $\nu = \delta_y$, 
	\begin{equation*} 
		\E_x H(X_t, y) = B_H( P_t^* \delta_x, \delta_y) = B_H(\delta_x,Q_t^* \delta_y) 
			= \E^y H(x,Y_t). \qedhere
	\end{equation*}
\end{proof}

Most of the standard duality functions have non-degenerate associated bilinear forms. 

\begin{prop} \label{lem:non-degenerate}
The following duality functions $H:E\times F\to \R$ give rise to non-degenerate bilinear forms $B_H$: 
	\begin{enumerate}\itemsep0pt
		\item Siegmund duality: $E = F = \R$, $H(x,y):= \mathbf{1}_{\{x\leq y\}}$.  
		\item Laplace duality: $E= F = [0,\infty)$, $H(x,\lambda):= \exp(-\lambda x)$. 
		\item Moment duality: $E=[0,1]$, $F=\N_0$, 	$H(x,n) = x^n$.  
		\item Coalescing dual: $\Lambda$ a finite set, $E=F=\mathcal{P}(\Lambda)$ power set, 
		 $H(A,B) = \mathbf{1}_{\{A\cap B= \emptyset\}}$. 
		\item Annihilating dual: $E=F= \mathcal{P}(\Lambda)$, $H(A,B) = (-1)^{|A\cap B|}$. 
	\end{enumerate}	 
\end{prop}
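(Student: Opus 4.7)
The plan is to reduce non-degeneracy of $B_H$ to two separation statements. Since $B_H(\mu, \delta_y) = \int_E H(x,y)\,\mu(\dd x)$, the left null space $\mathcal{N}_\mathrm{L}$ is trivial iff the family $\{H(\cdot, y)\}_{y\in F}$ is a determining class for $\meas(E)$, and symmetrically $\mathcal{N}_\mathrm{R}=\{0\}$ iff $\{H(x,\cdot)\}_{x\in E}$ separates $\meas(F)$. I will verify these two separation properties in each of the five cases, using the Hahn--Jordan decomposition to extend classical uniqueness theorems from probability measures to bounded signed measures.

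Cases 1--3 then reduce to classical uniqueness results. For Siegmund duality, $\int \mathbf{1}_{\{x\le y\}}\mu(\dd x) = \mu((-\infty,y])$ is the cumulative distribution function of $\mu$ and hence determines $\mu\in \meas(\R)$; the right null space is handled in the same way via $\nu([x,\infty))$. For Laplace duality, injectivity of the Laplace transform on $\meas([0,\infty))$ gives both directions, by the symmetry of $H$ in its arguments. For moment duality, the $E$-side uses uniqueness in the Hausdorff moment problem; on the $F$-side a finite signed measure on $\N_0$ is a summable sequence $(a_n)_{n\ge 0}$, and $\sum_n a_n x^n = 0$ on $[0,1]$ forces $a_n = 0$ for all $n$ by the identity theorem for power series.

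Cases 4 and 5 are finite-dimensional: $B_H$ is encoded by the $2^{|\Lambda|}\times 2^{|\Lambda|}$ matrix $M=[H(A,B)]_{A,B\in \mathcal{P}(\Lambda)}$, and non-degeneracy is equivalent to invertibility of $M$. For the coalescing dual, $H(A,B) = \mathbf{1}_{\{A\subseteq \Lambda\setminus B\}}$, so after relabelling columns via the bijection $B\mapsto C:=\Lambda\setminus B$ the matrix becomes the zeta matrix $[\mathbf{1}_{\{A\subseteq C\}}]_{A,C}$ of the Boolean lattice, which is invertible with inverse given by the Möbius function. For the annihilating dual, identifying $\mathcal{P}(\Lambda)$ with $\{0,1\}^\Lambda$ via indicator functions yields $H(A,B) = \prod_{i\in \Lambda}(-1)^{\mathbf{1}_A(i)\mathbf{1}_B(i)}$; hence $M$ is the $|\Lambda|$-fold tensor product of the $2\times 2$ Hadamard matrix with rows $(1,1)$ and $(1,-1)$, so $M^2 = 2^{|\Lambda|}\,\id$ and $M$ is invertible.

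The main obstacle is bookkeeping rather than depth: for each case one must check that the relevant separation statement holds for bounded signed measures (not only probability measures) and treat both variables of $H$ in turn. I expect the coalescing case to be the most delicate, because the matrix has no convenient symmetry and its invertibility is cleanest via the Möbius inversion formula on the Boolean lattice rather than a direct linear-algebra computation.
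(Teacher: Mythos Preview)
Your proposal is correct and, for cases 1--3 and 5, essentially identical to the paper's proof in Appendix~A: Siegmund via the cumulative distribution function, Laplace via injectivity of the Laplace transform, moment via Hausdorff uniqueness on the $E$-side and the identity theorem for power series on the $F$-side, and the annihilating dual via the tensor-product Hadamard matrix.

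The one genuine difference is case~4. You relabel columns by $B\mapsto \Lambda\setminus B$ to obtain the zeta matrix of the Boolean lattice and invoke M\"obius inversion. The paper instead observes that the coalescing matrix \emph{already} has the same tensor structure as the annihilating one: $H(A,B)=\prod_{x\in\Lambda} h_{n_A(x),n_B(x)}$ with $h=\begin{pmatrix}1&1\\1&0\end{pmatrix}$, so $H=\bigotimes_{x\in\Lambda} h$ and invertibility follows from $\det h\neq 0$. Both arguments are short and correct; the paper's has the advantage of treating cases~4 and~5 uniformly, which undercuts your closing remark that the coalescing case is the most delicate for lack of ``convenient symmetry'' --- it has exactly the same tensor symmetry as the annihilating case, just with a different $2\times 2$ block.
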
 

Prop~\ref{lem:non-degenerate} is proven in Appendix~\ref{app:standard}. The proposition is of interest because non-degeneracy implies uniqueness of the dual. Another closely related condition for uniqueness is that the family of functions $H(x,\cdot)$, $x\in E$ separates $\prob(F)$, see \cite{EthierKurtz86}; in \cite{Swart}, the duality is called \emph{informative} if both $H(x,\cdot)$, $x\in E$ and $H(\cdot,y)$, $y \in F$ are separating (in $\prob(F)$ and $\prob(E)$, respectively).

\begin{definition} 
	Let $E$ be a Polish space and $\mathcal{X} \subset L^\infty(E)$. We call $\mathcal{X}$ \emph{separating} if $\mathcal{X}$ \emph{separates} $\prob(E)$, i.e., 
	if two probability measures $P,Q\in \prob(E)$ satisfy $\int_E g \dd P = \int_E g \dd Q$ for all $g \in \mathcal{X}$, then $P=Q$. 	
\end{definition} 

We have the following implications. 

\begin{prop} \label{prop:uniqueness} 
	Let $(P_t)$ be a Markov semi-group in $E$. Suppose that one of the following conditions holds:
	\begin{enumerate} \itemsep0pt
		\item The family of functions $H(x,\cdot)$, $x\in E$, is separating.
		\item The family of functions $\int_E \mu(\dd x) H(x,\cdot)$, $\mu \in \prob(E)$, is separating.
		\item The right null space of $B_H$ is $\{0\}$. 
		\item $B_H$ is non-degenerate. 
	\end{enumerate} 
	Then the dual semi-group, if it exists, is unique. 
\end{prop}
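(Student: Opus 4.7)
The plan is to reduce the four conditions to two essentially different ones and then extract uniqueness in each case from the identity that two competing duals would impose on $H$. Suppose $(Q_t)$ and $(\tilde Q_t)$ are both dual to $(P_t)$ with respect to $H$. Using \eqref{eq:duality_semigroup} twice gives, for every $x\in E$, $y\in F$, $t\geq 0$,
\begin{equation*}
(Q_t H(x,\cdot))(y) \;=\; (P_t H(\cdot,y))(x) \;=\; (\tilde Q_t H(x,\cdot))(y),
\end{equation*}
and uniqueness of the semi-group means $Q_t(y,\cdot) = \tilde Q_t(y,\cdot)$ in $\prob(F)$ for all $y$ and $t$. Now condition 1 is a special case of condition 2 because $H(x,\cdot) = \varphi_{\delta_x}$, where $\varphi_\mu := \int_E \mu(\dd x) H(x,\cdot)$, so the family in 1 is contained in that of 2; and condition 4 trivially implies condition 3 by definition of non-degeneracy. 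Hence it suffices to derive uniqueness from each of conditions 2 and 3 separately.

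Assume first condition 2. Fix $y$, $t$ and write $\nu_1 := Q_t(y,\cdot)$, $\nu_2 := \tilde Q_t(y,\cdot) \in \prob(F)$. Integrating the displayed identity against an arbitrary $\mu \in \prob(E)$ and exchanging the order of integration (Fubini is licit since $H$ is bounded and all three measures are finite) produces
\begin{equation*}
\int_F \varphi_\mu(z)\, \nu_1(\dd z) \;=\; \int_F \varphi_\mu(z)\, \nu_2(\dd z)
\end{equation*}
for every $\mu \in \prob(E)$. Since $\{\varphi_\mu : \mu \in \prob(E)\}$ is a separating family for $\prob(F)$, this forces $\nu_1 = \nu_2$.

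Assume now condition 3. Set $\nu := Q_t(y,\cdot) - \tilde Q_t(y,\cdot) \in \meas(F)$. The displayed identity yields $\int_F H(x,z)\, \nu(\dd z) = 0$ for every $x\in E$, and integrating once more against an arbitrary $\mu \in \meas(E)$ (by Fubini) gives $B_H(\mu,\nu) = 0$ for all $\mu$. Thus $\nu$ lies in the right null space of $B_H$, which is trivial by hypothesis, so $\nu = 0$. There is no genuine obstacle in either case; the argument is essentially bookkeeping, translating the single identity $(Q_t H(x,\cdot))(y) = (\tilde Q_t H(x,\cdot))(y)$ into either \emph{``two probability measures agree on a separating class of test functions''} or \emph{``a signed measure lies in the right null space''}. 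One could moreover derive $2 \Rightarrow 3$ directly via a Hahn decomposition of $\nu$ (separately normalizing its positive and negative parts, which must have equal total mass once one tests against the constant-valued choice of $\mu$), but this is not needed for the proposition as stated.
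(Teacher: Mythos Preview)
Your main argument is correct and matches the paper's: the paper simply records the implications $(4)\Rightarrow(3)\Rightarrow(2)$, $(1)\Rightarrow(2)$, $(2)\Rightarrow$ uniqueness and leaves the details to the reader, and you have supplied those details (organizing slightly differently by proving uniqueness from $(3)$ directly rather than via $(3)\Rightarrow(2)$). One caveat: your final aside claiming $(2)\Rightarrow(3)$ via Hahn decomposition is not right---there is no reason a ``constant-valued'' $\varphi_\mu$ should exist, and indeed $(2)$ does not imply $(3)$ in general (take $E=\{c\}$, $F=\{a,b\}$, $H(c,a)=1$, $H(c,b)=2$: the single function $\varphi_{\delta_c}$ separates $\prob(F)$ but $2\delta_a-\delta_b$ lies in the right null space); since you correctly note this is not needed, the proof itself stands.
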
 
\noindent We note the implications $(4)\Rightarrow (3) \Rightarrow (2)$,  $(1) \Rightarrow (2)$, $(2)\Rightarrow$ uniqueness, and leave the elementary proof to the reader.

Despite Proposition~\ref{lem:non-degenerate}, not all duality functions are associated with non-degenerate bilinear forms. The following proposition describes a situation which is typical for pathwise duality, where processes often initially live on bigger spaces and we construct $(X_t)$ and $(Y_t)$ by forgetting a part of the information, see Section \ref{sec:pathwise}. 

\begin{prop} \label{lem:lift-deg}
	Let $(X_t)$ and $(Y_t)$ be two Markov processes with Polish state spaces $E$ and $F$, dual with respect to  $H:E\times F\to \R$. 
	Suppose that $(Y_t)$ can be lifted to a bigger space, i.e., there is a Polish space $G$, a measurable map $\pi:G\to F$ that is surjective but not injective, and a Markov process $(Z_t)$ on $G$ such that 
	$(\pi(Z_t))_{t\geq 0}$ is equivalent to  $(Y_t)$. Let $\tilde H(x,z):= H(x,\pi(z))$.  Then $(Z_t)$ is dual to $(X_t)$ with duality function $\tilde H$, and $B_{\tilde H}$ is degenerate. 
\end{prop}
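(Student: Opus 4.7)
The plan is to verify the duality identity directly from the definitions, and separately to exhibit a non-zero element in the right null space of $B_{\tilde H}$ to establish its degeneracy. Both parts are essentially bookkeeping and I do not expect any serious obstacle; the only delicate point is interpreting the word ``equivalent'' in the statement, which I take to mean that $(\pi(Z_t))_{t\geq 0}$ under $\P^z$ has the same finite-dimensional distributions as $(Y_t)_{t\geq 0}$ under $\P^{\pi(z)}$ --- this is exactly the regularity that Definition~\ref{def:duality1} uses.

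For the duality, I would fix $x \in E$, $z \in G$, and $t \geq 0$, and compute each side. On the left,
$$\E_x \tilde H(X_t, z) = \E_x H(X_t, \pi(z)) = \E^{\pi(z)} H(x, Y_t),$$
where the first equality is the definition of $\tilde H$ and the second applies the duality of $(X_t)$ and $(Y_t)$ with respect to $H$ at the point $(x, \pi(z))$. On the right, the equivalence of $(\pi(Z_t))$ (under $\P^z$) and $(Y_t)$ (under $\P^{\pi(z)}$) gives
$$\E^z \tilde H(x, Z_t) = \E^z H(x, \pi(Z_t)) = \E^{\pi(z)} H(x, Y_t),$$
and comparing the two lines yields $\E_x \tilde H(X_t, z) = \E^z \tilde H(x, Z_t)$, which is the required duality.

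For degeneracy, I would exploit the hypothesis that $\pi$ fails to be injective. Pick any pair of distinct points $z_1, z_2 \in G$ with $\pi(z_1) = \pi(z_2)$ and set $\nu := \delta_{z_1} - \delta_{z_2}$, a non-zero signed measure in $\meas(G)$. For every $\mu \in \meas(E)$,
$$B_{\tilde H}(\mu, \nu) = \int_E \bigl( H(x, \pi(z_1)) - H(x, \pi(z_2)) \bigr) \mu(\dd x) = 0,$$
so $\nu$ is a non-trivial element of the right null space of $B_{\tilde H}$, which proves degeneracy. The argument is symmetric in the sense that one could just as well choose a test measure on $E$, but the asymmetry of the construction (only $F$ is ``lifted'' to $G$) makes the right null space the natural place to look.
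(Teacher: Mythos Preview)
Your proof is correct and essentially identical to the paper's own argument: both establish duality via the chain $\E_x \tilde H(X_t,z) = \E_x H(X_t,\pi(z)) = \E^{\pi(z)} H(x,Y_t) = \E^z \tilde H(x,Z_t)$, and both exhibit $\delta_{z_1} - \delta_{z_2}$ (for $z_1\neq z_2$ with $\pi(z_1)=\pi(z_2)$) as a non-zero element of the right null space of $B_{\tilde H}$.
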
 
Here ``equivalence'' means that whenever $\pi(Z_0)$ and $Y_0$ have the same distribution, all finite-dimensional distributions of $\pi(Z_t)$ and $(Y_t)$ agree.

\begin{proof}
	Since $\pi$ is not injective, there are $z$ and $z'$ such that $z\neq z'$ and  $\pi(z) =\pi(z')$. It follows that $\delta_z - \delta_{z'}$ is in the right null space of $B_{\tilde H}$, and $B_{\tilde H}$ is degenerate. 
	Moreover, for every $z\in G$ and $x\in E$, we have 
	\begin{equation*}
		\E_x \tilde H(X_t,z) = \E_x H(X_t,\pi(z)) = \E^{\pi(z)} H(x,Y_t) = \E^z \tilde H(x,Z_t),
	\end{equation*}
	hence $(X_t)$ and $(Z_t)$ are dual with duality function $\tilde H$. 
\end{proof} 

We will come back to this situation in Section~\ref{sec:cones}, where we will characterize dualities that can be obtained as stochastic lifts of non-degenerate dualities. 

\subsection{Existence. Feller semi-groups} 

As mentioned in the previous section, any bounded operator $T$ in a Banach space has a unique dual operator $T'$, and any bounded operator in a Hilbert space has a unique adjoint operator $T^*$. For general dual pairs $(V,W,\la \cdot,\cdot\ra)$, however, it might be difficult to determine whether a given concrete operator has a dual.

The  existence of a dual Markov process adds a layer of difficulty, as it is not enough to ask whether $P_t^*$ has a $B_H$-dual \emph{operator} $T_t$ in $\meas(F)$: we also need to know whether the dual operator is of the form $T_t \mu = \int \mu(\dd x) Q_t(x,\cdot)$ with $Q_t(x,A)$ the transition kernels of some Markov process.
As it turns out, the existence of a dual \emph{operator} semi-group is tied to the invariance of some \emph{linear} subspace $\mathcal{V}$; the existence of a dual \emph{Markov} semi-group is tied to the stronger requirement that some \emph{convex} subset $\mathcal{V}_{1,+} \subset \mathcal{V}$ be invariant under $(P_t)$ \cite{krs07,moehle-cones}. 

\begin{definition}  \label{def:vwspaces}
	Fix $H:E\times F\to \R$ bounded and measurable. We define 
	\begin{align*}
		 \mathcal{V}_{1,+} & := \bigl \{ f \in L^\infty (E) \big|\,  \exists \nu \in \prob(F):\,  \forall x\in E\  f(x) = \int_E H(x,y) \nu(\dd y) \bigr \} \\
		\mathcal{W}_{1,+} & := \bigl\{ g \in L^\infty(F)\big|\, \exists \mu \in \prob(E):\ \forall y\in F\  g(y) = \int_E H(x,y) \mu(\dd x)  \}.
	\end{align*} 
	The spaces $\mathcal{V}$ and $\mathcal{W}$ are defined in a similar way, replacing $\prob(E)$ and $\prob(F)$ by $\meas (E)$ and $\meas(F)$ respectively. 
\end{definition} 
Thus $\mathcal{V}_{1,+} \subset \mathcal{V}\subset L^\infty(E)$ and $\mathcal{W}_{1,+} \subset \mathcal{W}\subset L^\infty(F)$. Clearly $\mathcal{V}_{1,+}$ and $\mathcal{W}_{1,+}$ are convex, and $\mathcal{V}$ and $\mathcal{W}$ are linear subspaces that arise as the linear hulls of $\mathcal{V}_{1,+}$ and $\mathcal{W}_{1,+}$. When $E$ and $F$ are finite state spaces, we may think of $H$ as a matrix, and the four spaces correspond to linear and convex combinations of the rows and columns of the matrix. 

We have the following necessary condition for the existence of a dual. 
\begin{prop}\label{prop:gen-form}
	Let $E$ and $F$ be Polish state spaces, $H:E\times F\to \R$ measurable and bounded, and $(P_t)$ a Markov semi-group in $E$. Suppose that $(P_t)$ has a Markov semi-group in $F$ dual with respect to $H$. Then $(P_t)$ leaves $\mathcal{V}_{1,+}$ and $\mathcal{V}$ invariant, and any dual semi-group leaves $\mathcal{W}_{1,+}$ and $\mathcal{W}$ invariant. 
\end{prop}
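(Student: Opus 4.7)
The plan is to prove invariance directly by computing $P_t f$ for $f \in \mathcal{V}_{1,+}$ and exhibiting an explicit probability measure on $F$ that represents it, via the dual semi-group. Fix $f \in \mathcal{V}_{1,+}$ and pick $\nu \in \prob(F)$ with $f(x) = \int_F H(x,y) \nu(\dd y)$. I would then compute
\begin{equation*}
    (P_t f)(x) = \E_x\Bigl[ \int_F H(X_t, y)\nu(\dd y)\Bigr] = \int_F \E_x H(X_t,y) \nu(\dd y) = \int_F \E^y H(x,Y_t) \nu(\dd y),
\end{equation*}
using Fubini in the middle step (legitimate since $H$ is bounded and $\nu$ is finite) and the duality relation \eqref{eq:duality-def} at the end. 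Rewriting $\E^y H(x,Y_t) = \int_F H(x,y') Q_t(y,\dd y')$ and applying Fubini once more yields $(P_t f)(x) = \int_F H(x,y') \nu'(\dd y')$ where $\nu' := \nu Q_t \in \prob(F)$ (a probability measure because $Q_t$ is a Markov kernel). Hence $P_t f \in \mathcal{V}_{1,+}$.

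For the linear space $\mathcal{V}$, I would repeat the same calculation with $\nu$ replaced by a signed measure $\mu \in \meas(F)$; the total variation of $\mu Q_t$ is bounded by that of $\mu$ (Markov kernels are total-variation contractions), so $\mu Q_t \in \meas(F)$ and $P_t f \in \mathcal{V}$. The claims for $(Q_t)$, $\mathcal{W}_{1,+}$, and $\mathcal{W}$ follow by the symmetric argument: for $g(y) = \int_E H(x,y) \mu(\dd x)$ with $\mu \in \prob(E)$ (resp.\ $\meas(E)$), the same chain of equalities with the roles of the two semi-groups swapped gives $(Q_t g)(y) = \int_E H(x,y) (\mu P_t)(\dd x)$, and $\mu P_t$ lies in $\prob(E)$ (resp.\ $\meas(E)$).

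The only step that requires any real care is the interchange of expectations and integrals against $\nu$ (or $\mu$); there is no analytic subtlety here since $H$ is globally bounded by assumption and all measures involved are finite, so Fubini applies without further qualification. The argument uses nothing beyond the duality identity itself and the fact that composition with a Markov kernel preserves $\prob$ and the space $\meas$ of bounded signed measures, which is precisely why the cones $\mathcal{V}_{1,+}$, $\mathcal{W}_{1,+}$ and their linear hulls are the natural invariant objects attached to the duality function $H$.
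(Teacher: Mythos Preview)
Your proof is correct and is essentially the same as the paper's: both show $(P_t f)(x) = \int_F H(x,y')(\nu Q_t)(\dd y')$ for $f = \int_F H(\cdot,y)\nu(\dd y)$, with the paper packaging the Fubini/duality step as $B_H(P_t^*\delta_x,\nu)=B_H(\delta_x,Q_t^*\nu)$ via Prop.~\ref{prop:bilin} while you write it out explicitly. The only cosmetic difference is that the paper deduces invariance of $\mathcal{V}$ from that of $\mathcal{V}_{1,+}$ (as its linear hull) rather than repeating the computation with signed measures.
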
 

\begin{proof}
	Let $(Q_t)$ be a dual Markov semi-group and $f(\cdot) = \int_E H(\cdot,y)\nu(\dd y) \in \mathcal{V}_{1,+}$, $\nu \in \meas(F)$. Then, for all $t>0$ and $x\in E$,  
	\begin{equation*}
		(P_t f)(x) = B_H(P_t ^*\delta_x,\nu) =B_H( \delta_x, Q_t^* \nu) = 
			\int_F H(x,y) (Q_t^* \nu) (\dd y) 
	\end{equation*}
	thus $P_t f \in \mathcal{V}_{1,+}$ and $\mathcal{V}_{1,+}$ is invariant under $(P_t)$. 
	Since $\mathcal{V}$ is the linear hull of $\mathcal{V}_{1,+}$,  the invariance of $\mathcal{V}$ follows from the invariance of $\mathcal{V}_{1,+}$. 
	We can invert the roles played by $E$ and $F$, $(P_t)$ and $(Q_t)$, and apply what we have just proven to $(Q_t)$: this gives that any dual $(Q_t)$ must leave $\mathcal{W}_{1,+}$ and $\mathcal{W}$ invariant. 
\end{proof}

A list of spaces $\mathcal{V}_{1,+}$ and $\mathcal{V}$ for some common duality functions is given in Tables \ref{table:v1} and \ref{table:v}. We note that invariance of $\mathcal{V}$ is related to regularity properties (e.g., does the semi-group map continuous functions to continuous functions?), while the invariance of $\mathcal{V}_{1,+}$ is sometimes associated with monotonicity properties. We shall come back to the latter aspect in Section~\ref{sec:monotonicity}. 

\begin{remark}
For finite state spaces and non-degenerate duality function, $\mathcal{V}$ is the image of $\R^F$ under an invertible matrix, hence $\mathcal{V}=\R^E$. As a consequence, $\mathcal{V}$ is automatically invariant. A trick around the invariance of $\mathcal{V}_{1,+}$ is to define the dual Markov process in an artificially doubled state space $\tilde F$, where it \emph{always} exists, see e.g. the paragraph after Eq. (2.3) in \cite{holley-stroock79}.
\end{remark}

\begin{table}
	\centering
	\begin{tabular}{cccl}
		\hline\hline
		$H(x,y)$ & $x\in E$ & $y \in F$ & $\mathcal{V}_{1,+} = \{\int_F H(\cdot,y) \nu(\dd y) \mid \nu \in \prob(F)\}$ \\
		\hline \hline
		$ \mathbf{1}_{\{x \geq y\}}$ & $x\in \R$ & $y \in \R$ & monotone increasing, right-continuous  \\
		& & & functions $f$ with $\lim_{-\infty} f =0$ and $\lim_{+\infty} f = 1$ \\
		\hline
	$x^n$ & $x \in [0,1]$ & $n \in \N_0$  & absolutely monotone functions $f$ with \\
		 &&& $f(1) =1$ \\
		 \hline
		 $y^n$ & $n \in \N_0$ & $y \in [0,1]$ & completely monotone sequences with $f(0)=1$\\
		 	\hline
		 	$\exp(-xy)$ & $ x\in [0,\infty)$ & $y \in [0,\infty)$ & completely monotone functions with $f(0)=1$\\
		 	\hline
	\end{tabular} 
	\vspace{.2cm}
	\caption{\small  \label{table:v1}  List of convex subsets $\mathcal{V}_{1,+} \subset L^\infty(E)$ associated with some common duality functions. For definitions of absolute and complete monotonicity, see Sections~VII.2, VII.3 and XIII.4 in \cite{feller2}.}  
\end{table}

\begin{table} 
	\centering
	\begin{tabular}{cccl}
		\hline\hline
		$H(x,y)$ & $x\in E$ & $y \in F$ & $\mathcal{V}= \{\int_F H(\cdot,y) \nu(\dd y) \mid \nu \in \meas(F)\}$ \\
		\hline \hline
		$ \mathbf{1}_{\{x \geq y\}}$ & $x\in \R$ & $y \in \R$ & bounded, right-continuous functions $f$ with  \\
		& & &  bounded variation, $\lim_{-\infty} f =0$, and $\lim_{+\infty} f \in \R$ \\
		\hline
	$x^n$ & $x \in [0,1]$ & $n \in \N_0$  & functions $f\in C([0,1])$ with an analytic extension  \\
		 &&& to the complex open unit disk that is in the Wiener\\
		 &&& algebra; subset of a $H^\infty$ Hardy space.\\
		 \hline
		 $\exp(-xy)$ & $ x\in [0,\infty)$ & $y \in [0,\infty)$ & subset of the functions with a bounded continuous \\
		 &&& extension $F$ to the complex half-plane $\Re z \geq 0$,\\
		 &&& analytic in $\Re z>0$\\
		 \hline
	\end{tabular} 
	\vspace{.2cm}
	\caption{\small \label{table:v}  List of linear subspaces $\mathcal{V} \subset L^\infty(E)$ associated with some common duality functions; see \cite{hoffman} for more on the Wiener algebra and Hardy spaces.}  
\end{table}

When both space and time are discrete, the invariance of $\mathcal{V}_{1,+}$ is actually sufficient for the existence of a dual.  Recall that in the discrete case duality is equivalent to 
\begin{equation} \label{eq:didu}
	\forall x\in E \ \forall y\in F:\ \sum_{x'\in E} P(x,x') H(x',y) = \sum_{y'\in F} Q(y,y') H(x,y'), 
\end{equation}
where $P$ and $Q$ are the transition matrices of Markov chains. The signed measures on $F$ correspond to $\ell^1(F)$, and $\mathcal{V}$ becomes the image of $\ell^1(F)$ under the linear map with matrix $H$. 

\begin{prop}[Discrete time and discrete state space] \label{prop:discrete} 
	Let $E$ and $F$ be countable spaces, endowed with the discrete topology, and $H:E\times F\to \R$ bounded. Let $P$ be a stochastic $E\times E$-matrix. Then 
	\begin{enumerate} 
		\item The problem~\eqref{eq:didu}			
			admits a solution $(Q(y,z) )_{y,z\in F}$ with $\sum_{z\in F} |Q(y,z)|<\infty$ 
		for all $y\in F$ if and only if $\mathcal{V}$ is invariant under $P$. 
		\item $Q$ can be chosen as a stochastic matrix if and only if $\mathcal{V}_{1,+}$ is invariant under $P$.    
	\end{enumerate} 
\end{prop}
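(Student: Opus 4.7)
The plan is to view the duality equation~\eqref{eq:didu} as a matrix factorization $PH = HQ^\trans$, where $H(x,y)$ is interpreted as a bounded $E\times F$ matrix acting on $\nu \in \ell^1(F) \cong \meas(F)$ by $(H\nu)(x) := \sum_{z\in F} H(x,z)\nu(z)$. Under this identification, $\mathcal{V} = H\,\ell^1(F)$ and $\mathcal{V}_{1,+} = H\,\prob(F)$, and the columns $H(\cdot,y) = H\delta_y$ are distinguished generators of both cones.

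For part~1, the direction ``invariance $\Rightarrow$ existence of $Q$'' is essentially immediate: invariance of $\mathcal{V}$ applied to each $H(\cdot,y) \in \mathcal{V}$ gives $PH(\cdot,y) = H\nu_y$ for some $\nu_y \in \ell^1(F)$, and setting $Q(y,z):=\nu_y(z)$ produces a solution of~\eqref{eq:didu} with $\sum_z|Q(y,z)| = \|\nu_y\|_{\ell^1}< \infty$. For the converse, the duality equation itself rewrites as $PH(\cdot,y) = H\nu_y$ with $\nu_y := Q(y,\cdot) \in \ell^1(F)$, so $PH(\cdot,y) \in \mathcal{V}$; to extend to a general $f = H\mu \in \mathcal{V}$, I would Fubini-exchange $P$ with the $\mu$-sum (justified by $\|H\|_\infty\|\mu\|_{\ell^1}<\infty$), apply the duality equation column by column, and identify $Pf = H\mu'$ with $\mu'(z) := \sum_y \mu(y)Q(y,z)$. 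Part~2 repeats these two arguments under the added non-negativity/normalisation constraint: invariance of $\mathcal{V}_{1,+}$ on $H\delta_y$ exhibits $\nu_y \in \prob(F)$, giving a stochastic $Q$; conversely, if $Q$ is stochastic and $\nu \in \prob(F)$, then $\mu := \nu Q$ is non-negative with total mass $\sum_z\sum_y\nu(y)Q(y,z) = \sum_y \nu(y) = 1$ by non-negative Fubini, so $\mu \in \prob(F)$ and $P(H\nu) = H\mu \in \mathcal{V}_{1,+}$.

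The main obstacle is a technical subtlety in the converse direction of part~1: the hypothesis only provides $\sum_z|Q(y,z)|<\infty$ pointwise in $y$, not a uniform bound, so the final rearrangement identifying $\mu' = \mu Q$ as an element of $\ell^1(F)$ is not automatic since $\sum_y|\mu(y)|\sum_z|Q(y,z)|$ need not converge. I would handle this by first verifying the conclusion for finitely supported $\mu$, where the double sum is manifestly finite and $P(H\mu) = H(\mu Q) \in \mathcal{V}$ follows by a trivial exchange of two finite sums; and then passing to general $\mu \in \ell^1(F)$ by an approximation $\mu_n \to \mu$ in $\ell^1$, using that $\|H\mu - H\mu_n\|_\infty \leq \|H\|_\infty\|\mu-\mu_n\|_{\ell^1} \to 0$ and that $P$ is an $L^\infty$-contraction, together with the structure of $\mathcal{V}$ as the image $H\,\ell^1(F)$, to conclude $Pf \in \mathcal{V}$. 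In part~2 no such difficulty arises, because stochasticity of $Q$ gives the uniform bound $\sum_z|Q(y,z)| = 1$ and the Fubini step is unconditional.
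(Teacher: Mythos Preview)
Your approach is exactly the paper's: for sufficiency, represent each column $PH(\cdot,y)$ as $H\nu_y$ with $\nu_y\in\ell^1(F)$ (resp.\ $\prob(F)$) and set $Q(y,\cdot):=\nu_y$; for necessity, argue as in Proposition~\ref{prop:gen-form}. Part~2 is correct in both your write-up and the paper's, for the reason you give (stochastic $Q$ makes $\nu\mapsto\nu Q$ an $\ell^1$-contraction, so Fubini is unconditional).

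In part~1 you correctly isolate a genuine subtlety in the necessity direction, but your proposed fix by approximation does not close it: $\mathcal{V}=H\,\ell^1(F)$ need not be closed in $L^\infty(E)$, so the uniform limit of the $P(H\mu_n)\in\mathcal{V}$ has no reason to lie in $\mathcal{V}$. In fact the necessity in part~1 can fail as written. Take $E=F=\N$ and $H(x,y)=\delta_{xy}$, so that $\mathcal{V}=\ell^1(\N)\subset\ell^\infty(\N)$; partition $\N$ into consecutive blocks $B_k$ of size $k$ with least element $n_k$, and let $P(x,n_k)=1$ for all $x\in B_k$. Then $Q:=P^\trans$ solves~\eqref{eq:didu} with $\sum_z|Q(y,z)|=\sum_zP(z,y)\le k<\infty$ for $y\in B_k$, yet for $\mu:=\sum_k k^{-2}\delta_{n_k}\in\ell^1$ one gets $(P\mu)(x)=k^{-2}$ for every $x\in B_k$, whence $\|P\mu\|_{\ell^1}=\sum_k k\cdot k^{-2}=\infty$ and $P\mathcal{V}\not\subset\mathcal{V}$. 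The paper's one-line reference to Proposition~\ref{prop:gen-form} carries the same gap, since that argument uses that $Q_t^*$ maps $\meas(F)$ to $\meas(F)$ --- automatic for Markov kernels (hence fine in part~2), but not for a matrix $Q$ with merely pointwise-finite $\ell^1$ rows. A clean way to salvage part~1 is to add the uniform hypothesis $\sup_y\sum_z|Q(y,z)|<\infty$, under which $\nu\mapsto\nu Q$ is bounded on $\ell^1(F)$ and your Fubini step goes through directly without any approximation.
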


\begin{proof}
	1. The necessity of the invariance of $\mathcal{V}$ is proven as in Prop.~\ref{prop:gen-form}. 
	For the sufficiency, given $y \in F$, let $h_y \in \mathcal{V}$ be the function  $h_y(x):= H(x,y)$ and let $(Q(y,z))_{z\in F}$ be a vector in $\ell^1(F)$ such that 
	$P h_y = \sum_{z\in F} Q(y,z) h_z$. Such a vector exists because of the invariance of $\mathcal{V}$ under $P$, and one can check that the matrix $Q$ defined in this way solves Eq.~\eqref{eq:didu}. 

	2. The necessity of the invariance of $\mathcal{V}_{1,+}$ is proven as in Prop.~\ref{prop:gen-form}. For the sufficiency, we proceed as in 1., noting that $h_y\in \mathcal{V}_{1,+}$ so that we can chose $Q(y,\cdot) \in \prob(F)$ such that $P h_y = \sum_{z\in F} Q(y,z) h_z$. 
\end{proof} 

For non-discrete state spaces and continuous time, the situation is more complicated, as we need to be able to choose the dual transition kernel $Q_t(x,\dd y)$ in such a way that the Chapman-Kolmogorov equations hold and that $x\mapsto Q_t(x,B)$ is measurable for every measurable $B\subset F$. The next theorem addresses non-degenerate dualities; in this case non-degeneracy ensures that there is a unique possible choice $Q_t(x,B)$ and the Chapman-Kolmogorov equations automatically hold. When $F$ is not discrete, we impose an additional condition to ensure measurability. 

\begin{assumption} \label{ass:w-dense} 
	The duality function $H$ is such that the associated linear space $\mathcal{W}$ from Definition~\ref{def:vwspaces} satisfies the following: 
	Every measurable function $g\in L^\infty(F)$ is the pointwise limit of a uniformly bounded sequence of functions $(g_n)$ from $\R \mathbf{1} + \mathcal{W}$.
\end{assumption} 

Clearly, if $H$ satisfies Assumption \ref{ass:w-dense}, then $\mathcal{W}$ is separating. The converse requires additional conditions. We recall the definition of continuous functions vanishing at infinity, useful in locally compact spaces: $f\in C_0(F)$ if  and only if $f$ is continuous and 
 for every $\eps>0$ there is a compact set $K\subset F$ such that $|f(x)|\leq \eps$ for $x\in F\setminus K$.  

\begin{lemma} \label{lem:pwdense}
	Suppose that $\mathcal{W}$ is separating on $\meas(F)$ and in addition one of the following conditions holds:
	\begin{enumerate} \itemsep0pt
	\item  $F$ is Polish and locally compact, and $\mathcal{W}\subset C_0(F)$.  
	\item  \label{ass:mvs}
	$\mathcal{W}$ contains a subset $\mathcal{W}'$ that is separating, closed under multiplication ($f,g\in \mathcal{W}'\Rightarrow fg\in \mathcal{W}'$), and generates the full Borel $\sigma$-algebra on $F$ ($\sigma(\mathcal{W}') = \mathcal{B}(F)$). 
	\end{enumerate} 
	Then Assumption \ref{ass:w-dense} holds.  
\end{lemma}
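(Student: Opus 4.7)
The argument proceeds via the bounded (functional) monotone class theorem. Let $\mathcal{H}$ denote the smallest collection of bounded Borel-measurable functions $F\to\R$ that contains $\R\mathbf{1}+\mathcal{W}$ and is closed under uniformly bounded pointwise limits of sequences---the bp-closure of $\R\mathbf{1}+\mathcal{W}$. By construction $\mathcal{H}$ is a real vector space containing $\mathbf{1}$, and it is furthermore closed under pointwise products: if $f_n\to f$ and $g_n\to g$ pointwise with uniform $L^\infty$-bounds, then $f_n g_n\to fg$ pointwise with a uniform bound. The plan is to show $\mathcal{H}=L^\infty(F,\mathcal{B}(F))$; reading ``pointwise limit of a uniformly bounded sequence'' in Assumption~\ref{ass:w-dense} as ``membership in the bp-closure''---the sense in which the assumption is actually invoked in the downstream measurability arguments---this is exactly the conclusion of the lemma.

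For case~(2), the subset $\mathcal{W}'\subseteq \mathcal{W}\subseteq \mathcal{H}$ is by hypothesis multiplicative with $\sigma(\mathcal{W}')=\mathcal{B}(F)$, so the functional monotone class theorem (see, e.g., Dellacherie--Meyer) yields $\mathcal{H}\supseteq L^\infty(F,\mathcal{B}(F))$ immediately. For case~(1), the separating hypothesis for $\mathcal{W}\subseteq C_0(F)$ on $\prob(F)$ implies separation of Dirac measures and hence of points of $F$. The algebra $\mathcal{A}$ generated by $\mathcal{W}\cup\{\mathbf{1}\}$ is then a point-separating subalgebra of $C_b(F)$ containing constants, and $\mathcal{A}\subseteq\mathcal{H}$ because $\mathcal{H}$ is itself an algebra containing $\mathcal{W}$ and $\mathbf{1}$. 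A Lindel\"of-type argument applied to the open cover $\{(x,y):f(x)\neq f(y)\}_{f\in\mathcal{A}}$ of the separable space $F\times F\setminus\Delta$ produces a countable subfamily $\mathcal{A}_0\subseteq\mathcal{A}$ that still separates points, and the Kuratowski--Lusin--Suslin Borel isomorphism theorem applied to the continuous injection $F\hookrightarrow\R^{\mathcal{A}_0}$ gives $\sigma(\mathcal{A})\supseteq\sigma(\mathcal{A}_0)=\mathcal{B}(F)$. A second application of the functional monotone class with the multiplicative class $\mathcal{A}$ then yields $\mathcal{H}=L^\infty(F,\mathcal{B}(F))$.

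The main conceptual obstacle is the gap between a single-sequence pointwise limit and membership in the bp-closure. The Baire hierarchy on, say, $[0,1]$ is strict, so there exist bounded Borel functions that are not pointwise limits of any uniformly bounded sequence of continuous functions; with a literal single-sequence reading, the lemma would therefore fail in case~(1). The appropriate interpretation of Assumption~\ref{ass:w-dense} is membership in the iterative bp-closure, which is exactly what the functional monotone class delivers. The remaining technicality---passing from separation of points by $\mathcal{A}$ to $\sigma(\mathcal{A})=\mathcal{B}(F)$ in case~(1)---is standard once Polishness of $F$ is available, and the presence of $\mathbf{1}$ in $\mathcal{A}$ causes no issue in the non-compact setting since we work throughout in $C_b(F)\subseteq L^\infty(F,\mathcal{B}(F))$ rather than in $C_0(F)$.
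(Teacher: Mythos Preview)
Your treatment of case~(2) matches the paper's: both apply the functional monotone class theorem directly to the multiplicative class $\mathcal{W}'$. Your observation about the distinction between a single bounded pointwise limit and membership in the iterated bp-closure is a genuine and useful point; the paper's literal formulation of Assumption~\ref{ass:w-dense} is indeed slightly loose in this respect, though its downstream use (measurability of $y\mapsto Q_t(y,B)$ via dominated convergence) only requires the bp-closure version you describe.

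In case~(1), however, there is a real gap. You assert that the bp-closure $\mathcal{H}$ of $\R\mathbf{1}+\mathcal{W}$ is closed under products, justified by ``if $f_n\to f$ and $g_n\to g$ boundedly pointwise, then $f_ng_n\to fg$ boundedly pointwise.'' This observation is correct but does not prove what you need: it shows only that \emph{if} the products $f_ng_n$ already lie in $\mathcal{H}$ (or in the base class), then $fg\in\mathcal{H}$. Since $\R\mathbf{1}+\mathcal{W}$ is merely a linear space, not an algebra, products of its elements need not lie in it, and the bootstrap never starts. Consequently you cannot conclude that the algebra $\mathcal{A}$ generated by $\mathcal{W}\cup\{\mathbf{1}\}$ sits inside $\mathcal{H}$, and the second monotone-class application is unsupported. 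Note too that your argument uses only that $\mathcal{W}$ separates \emph{points}, which is strictly weaker than the hypothesis that $\mathcal{W}$ separates \emph{signed measures}; this is a sign that something has been left on the table.

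The paper's route in case~(1) avoids this issue entirely by exploiting the full measure-separating hypothesis: a Hahn--Banach/Riesz--Markov argument shows that $\mathcal{W}$ is uniformly dense in $C_0(F)$ (any nonzero continuous functional on $C_0(F)$ vanishing on $\overline{\mathcal{W}}$ would correspond to a nonzero signed measure annihilated by $\mathcal{W}$). Uniform density gives $C_0(F)\subset\mathcal{H}$ directly, without ever needing $\mathcal{H}$ to be multiplicative; one then applies the monotone class theorem with the genuinely multiplicative class $C_0(F)$, which generates $\mathcal{B}(F)$. This density step is the missing ingredient in your argument.
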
 
Remember that for a locally compact space to be Polish as well, it is necessary and sufficient that its topology has a countable base; moreover every such space is $\sigma$-compact. 

\begin{proof}[Proof of Lemma \ref{lem:pwdense}]
	We first prove 2. Suppose that $\mathcal{W}'$ is closed under multiplication.  The functional monotone class theorem tells us that the closure of $\R\mathbf{1}+ \mathcal{W}'$ under  pointwise monotone limits of sequences is dense in $L^\infty(F, \sigma(\mathcal{W}')) = L^\infty(F,\mathcal{B}_F) = L^\infty(F)$. 	

	Next we prove 1. First we note that $\mathcal{W}$ is dense in $C_0(F)$ with respect to uniform convergence. Indeed, if this was not the case, the Hahn-Banach theorem would allow us to find a continuous functional $\varphi: C_0(F) \to \R$ vanishing on the closure of $\mathcal{W}$. By the Riesz-Markov theorem, there is a bounded signed measure $\mu$ such that $\varphi(f) = \int_E f\dd \mu$ for all $f$, and in particular $\int_E f\dd \mu = 0$ for all $f\in \mathcal{W}$. But this contradicts that $\mathcal{W}$ separates $\meas(F)$. Thus $\mathcal{W}$ is dense in $C_0(F)$.  Since for Polish spaces $C_0(F)$ generates the full Borel $\sigma$-algebra and $C_0(F)$ is closed under multiplication, the proof is concluded with a monotone class theorem just as in the proof of item 1. 
\end{proof} 

\begin{remark}
	A naive reasoning suggests that if $\mathcal{W'}$ is separating, then $\sigma(\mathcal{W}) = \mathcal{B}(F)$. Indeed, if the inclusion $\sigma(\mathcal{W'})\subset \mathcal{B}_F$ is strict, it is tempting to think that there must be some probability measure $P$ on $(F,\sigma(\mathcal{W'}))$ that has two distinct extensions $Q_1$, $Q_2$ to $(F,\mathcal{B}(F))$. We would have $\int g \dd Q_1 = \int g \dd Q_2$ for every $g \in \mathcal{W'}$ but $Q_1 \neq Q_2$, contradicting that $\mathcal{W'}$ is separating. 
	
	The following example, which we owe to M. Scheutzow, shows however that strict inclusion of $\sigma$-algebras in general does \emph{not} imply the existence of a probability measure with non-unique extensions: Let $F= [0,1]$, $\mathcal{B}$ the Borel $\sigma$-algebra, and $\mathcal{P}$ the collection of subsets of $[0,1]$. A Borel measure $P$ is either purely discrete, in which case it has a unique extension to $\mathcal{P}$, or has a continuous component, in which case it has no extension to $\mathcal{P}$. Thus even though the inclusion $\mathcal{B} \subset \mathcal{P}$ is strict, there is no probability measure on $\mathcal{B}$ with more than one extension to $\mathcal{P}$; 
	this is why in item 2. of Lemma~\ref{lem:pwdense} we explicitly require that $\sigma(\mathcal{W}') = \mathcal{B}(F)$. 
\end{remark}

The standard duality functions examined in Prop.~\ref{lem:non-degenerate} satisfy Assumption \ref{ass:w-dense}. Note that for the moment and Laplace duals, the families $\mathcal{W}'= \{H(x,\cdot) \mid x \in E\}$ are closed under multiplication -- for example, $x_1^n x_2^n = (x_1 x_2)^n$, whence $H(x_1,\cdot)H(x_2,\cdot) =H(x_1x_2,\cdot)$.

\begin{theorem} \label{prop:non-deg} 
	  Let $E,F$ be Polish spaces, $H:E\times F\to \R$ bounded and measurable, 
	and $(X_t)$ a Markov process with state space $E$ and semi-group $(P_t)$. 
	Suppose that either $\mathcal{W}$ is separating and $F$ discrete, or Assumption \ref{ass:w-dense} holds. Then $(P_t)$ has a dual Markov semi-group with respect to $H$  if and only if $\mathcal{V}_{1,+}$ is invariant, and the dual with respect to $H$ is unique. 
\end{theorem}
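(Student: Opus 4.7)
The \emph{only if} part falls out immediately from Proposition~\ref{prop:gen-form}, so the real work lies in the converse and uniqueness. For uniqueness, I would first observe that both hypotheses force $\mathcal{W}$ to separate $\prob(F)$: in the discrete case this is by assumption, while under Assumption~\ref{ass:w-dense} a bounded-convergence argument along a uniformly bounded pointwise approximating sequence $g_n \in \R\mathbf{1} + \mathcal{W}$ upgrades agreement on $\mathcal{W}$ to agreement on all bounded measurable functions. Uniqueness of the dual semi-group then follows from Proposition~\ref{prop:uniqueness}(2), since $\mathcal{W}$ contains every function of the form $y \mapsto \int H(x, y)\mu(\dd x)$, $\mu \in \prob(E)$.

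For existence, the plan is to build the transition kernel $Q_t(y, \dd z)$ directly from the invariance hypothesis. For each $y \in F$, the function $H(\cdot, y)$ lies in $\mathcal{V}_{1,+}$ by taking $\nu = \delta_y$, so by invariance there is a probability measure $\nu_{t, y}$ on $F$ with
$$(P_t H(\cdot, y))(x) = \int_F H(x, z)\, \nu_{t, y}(\dd z) \quad \text{for all } x \in E,$$
unique by the separating property above. I set $Q_t(y, B) := \nu_{t, y}(B)$. The identity $Q_0(y, \cdot) = \delta_y$ is then immediate from $P_0 = \id$ and uniqueness. For Chapman--Kolmogorov I would write $P_{t+s}H(\cdot, y) = P_t P_s H(\cdot, y)$, use Fubini to swap $P_t$ with the inner integral against $Q_s(y, \cdot)$, and invoke uniqueness to identify $Q_{t+s}(y, \cdot)$ with the concatenation $\int Q_t(z, \cdot)\, Q_s(y, \dd z)$.

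The step I expect to fight with is the measurability of $y \mapsto Q_t(y, B)$ for arbitrary Borel $B$, which is where Assumption~\ref{ass:w-dense} earns its keep. The starting point is that, for $\mu \in \meas(E)$ and $g_\mu(z) := \int H(x, z)\mu(\dd x) \in \mathcal{W}$, Fubini and the defining relation give
$$\int_F g_\mu(z)\, Q_t(y, \dd z) = \int_E (P_t H(\cdot, y))(x)\, \mu(\dd x),$$
and the right-hand side is measurable in $y$ because $(x, y)\mapsto P_t H(\cdot, y)(x)$ is jointly measurable (Fubini plus measurability of $H$). By linearity, $y \mapsto \int g\, \dd Q_t(y, \cdot)$ is then measurable for every $g \in \R\mathbf{1} + \mathcal{W}$, and Assumption~\ref{ass:w-dense} combined with bounded convergence extends this to all bounded measurable $g$; taking $g = \mathbf{1}_B$ closes the argument. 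When $F$ is discrete the measurability issue is vacuous, which is why Assumption~\ref{ass:w-dense} is not required in that case.
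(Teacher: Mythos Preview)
Your proposal is correct and follows essentially the same route as the paper: build $Q_t(y,\cdot)$ as the unique representing measure for $P_t H(\cdot,y)$, get measurability of $y\mapsto Q_t(y,B)$ by first checking it on $\mathcal{W}$ via Fubini and then extending through Assumption~\ref{ass:w-dense} and dominated convergence, and deduce Chapman--Kolmogorov from uniqueness. The only cosmetic difference is that the paper phrases Chapman--Kolmogorov via the bilinear form $B_H$ (writing $B_H(\mu,Q_{t+s}^*\nu)=B_H(\mu,Q_s^*Q_t^*\nu)$) rather than by directly swapping $P_t$ past the $Q_s(y,\cdot)$-integral; note that your version implicitly uses the measurability of $z\mapsto Q_t(z,B)$ to form the concatenated kernel, so in a polished write-up you should establish measurability before the semi-group property.
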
 

See also \cite[Prop. 2.5]{moehle-cones}.
Before we come to the proof of Theorem~\ref{prop:non-deg} and Lemma~\ref{lem:pwdense}, we formulate a result on Feller semi-groups. 
Recall that 
$(P_t)$ is a \emph{Feller semi-group} on $C_0(E)$ if it maps $C_0(E)$ to itself and it is strongly continuous in $C_0(E)$, i.e.,  $||P_t f - f||_\infty \to 0$ as $t\to 0$, for every $f\in C_0(E)$. 

\begin{theorem} \label{thm:feller}
	Assume that $E$ and $F$ are Polish and locally compact. 
	Suppose that $H\in C_0(E\times F)$ and $B_H$ is non-degenerate. Then $(P_t)$ has a  Markov semi-group with respect to $H$ if and only if $\mathcal{V}_{1,+}$ is invariant, and the dual  is unique. Furthermore, the dual is a Feller semi-group if and only if $(P_t)$ is.   
\end{theorem}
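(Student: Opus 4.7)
The plan is to verify the hypotheses of Theorem~\ref{prop:non-deg} for the existence/uniqueness half, and then to analyze the action of $Q_t$ on $C_0(F)$ directly, using the duality relation to transport the Feller property between the two semi-groups.

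First I would show that $H\in C_0(E\times F)$ forces $\mathcal{W}\subset C_0(F)$. Indeed, for $g(y)=\int_E H(x,y)\mu(\dd x)$ with $\mu\in\meas(E)$, continuity of $g$ follows by dominated convergence from joint continuity of $H$, while to see $g\in C_0(F)$ one takes, for $\eps>0$, a compact set $K\subset E\times F$ with $|H|<\eps$ off $K$: for $y$ outside the compact set $\pi_F(K)$ one has $|H(x,y)|<\eps$ uniformly in $x$, hence $|g(y)|\le\eps\|\mu\|$. Non-degeneracy of $B_H$ gives that $\mathcal{W}$ separates $\meas(F)$ (in particular $\prob(F)$), so Lemma~\ref{lem:pwdense}(1) applies and Assumption~\ref{ass:w-dense} is satisfied. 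Theorem~\ref{prop:non-deg} then gives the existence (iff $\mathcal{V}_{1,+}$ is invariant) and uniqueness of the dual Markov semi-group $(Q_t)$.

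Next I would show that if $(P_t)$ is Feller on $C_0(E)$, then $(Q_t)$ is Feller on $C_0(F)$; the converse follows from the symmetry of the hypotheses in $E$ and $F$. The main tool is the explicit identity
\begin{equation*}
	(Q_t g)(y)=\int_E (P_t H(\cdot,y))(x)\,\mu(\dd x),\qquad g(y)=\int_E H(x,y)\,\mu(\dd x)\in\mathcal{W},
\end{equation*}
obtained by Fubini together with the pointwise duality $Q_tH(x,\cdot)(y)=P_tH(\cdot,y)(x)$. A standard uniform-approximation argument (approximate $H$ uniformly by elements of $C_c(E\times F)$) shows that the map $\Phi:F\cup\{\infty\}\to C_0(E)$ defined by $\Phi(y):=H(\cdot,y)$ and $\Phi(\infty):=0$ is continuous, so its image is a compact subset of $C_0(E)$. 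Composing with the Feller map $P_t$ gives a continuous map $y\mapsto P_t H(\cdot,y)$ into $C_0(E)$ whose value at $\infty$ is $0$; pairing against $\mu$ yields $Q_t g\in C_0(F)$. For strong continuity at $t=0$, I use the standard fact that a $C_0$-contraction semi-group is strongly continuous uniformly on compact subsets of its Banach space, so $\sup_{y\in F}\|(P_t-\id)H(\cdot,y)\|_\infty\to 0$, whence $\|Q_tg-g\|_\infty\le\|\mu\|\sup_y\|(P_t-\id)H(\cdot,y)\|_\infty\to 0$.

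Finally I would extend both properties from $\mathcal{W}$ to all of $C_0(F)$. The Hahn--Banach argument in the proof of Lemma~\ref{lem:pwdense}(1) shows that $\mathcal{W}$ is uniformly dense in $C_0(F)$; since $(Q_t)$ is a contraction on $L^\infty(F)$ and $C_0(F)$ is uniformly closed, approximating $g\in C_0(F)$ by $g_n\in\mathcal{W}$ gives $Q_tg\in C_0(F)$, and a $3\eps$-estimate $\|Q_tg-g\|_\infty\le 2\|g-g_n\|_\infty+\|Q_tg_n-g_n\|_\infty$ yields strong continuity. The main technical obstacle is the step that $\Phi$ has compact image in $C_0(E)$: without it, strong continuity of $(P_t)$ would provide only pointwise-in-$y$ convergence, which is too weak to recover the uniform estimate needed for strong continuity of $Q_tg$.
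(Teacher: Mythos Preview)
Your argument is correct, and the existence/uniqueness half is exactly the paper's route (verify Assumption~\ref{ass:w-dense} via Lemma~\ref{lem:pwdense} and invoke Theorem~\ref{prop:non-deg}). For the Feller equivalence, however, you take a genuinely different path. The paper works on the \emph{measure} side: assuming $(Q_t)$ is Feller it shows $Q_t^*\nu\to\nu$ weakly, and then invokes Lemma~\ref{lem:felleraux2} (an Arzel\`a--Ascoli argument showing $\mathcal{V}_{1,+}$ is relatively compact and that weak convergence of $\nu_n$ yields uniform convergence of $\int_F H(\cdot,y)\nu_n(\dd y)$) to deduce $\|P_t f-f\|_\infty\to 0$ for $f\in\mathcal{V}_{1,+}$. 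You instead work on the \emph{function} side: you observe that $y\mapsto H(\cdot,y)$ extends to a continuous map from the one-point compactification of $F$ into $C_0(E)$, so its image is compact, and then use the elementary fact that a $C_0$-semi-group is strongly continuous uniformly on norm-compact sets. Your route is more direct and bypasses Lemma~\ref{lem:felleraux2} entirely; the paper's route has the advantage that the relative compactness of all of $\mathcal{V}_{1,+}$ (not just the ``columns'' $H(\cdot,y)$) is established along the way, which is reused elsewhere in Section~\ref{sec:cones}. Both arguments ultimately hinge on the same compactness phenomenon forced by $H\in C_0(E\times F)$, just exploited at different stages.
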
 
\begin{proof}[Proof of Theorem \ref{prop:non-deg}] 
	The uniqueness follows from Prop. \ref{prop:uniqueness}. For the existence, let $y\in F$ and $t \geq 0$; note $f_y(\cdot):=H(\cdot,y) = \int_F H(\cdot,z) \delta_y(\dd z) \in \mathcal{V}_{1,+}$. 
	Because of the invariance of $\mathcal{V}_{1,+}$ under $(P_t)$, there is a probability measure $\nu=\nu_{t,y}$ on $F$ such that $P_t f_ y (x) = \int_F H(x,y') \nu(\dd y')$.  The measure $\nu$ is unique because $\mathcal{W}$ is separating. We set $Q_t(y,B):= \nu_{t,y}(B)$. By construction, for every $t$ and $y$,  $Q_t(y,\cdot)$ is a probability measure, and we have, for all $t,x,y$,
	\begin{equation*}
		\int_E P_t(x,\dd x') H(x',y) = \int_F H(x,y') Q_t(y',\dd y).  
	\end{equation*}
	In order to show that $(Q_t)$ is a Markov semi-group dual to $(P_t)$, it remains to check that $y\mapsto Q_t(y,B)$ is measurable and that the Chapman-Kolmogorov equations hold. 
	We start with the measurability. 
	Let $g(\cdot) = \int_E \mu(\dd x) H(x,\cdot)  \in \mathcal{W}$, $\mu$ a signed measure on $E$. Then 
	\begin{equation*}
		(Q_t g)(y) = \int_E (\mu P_t)(\dd x) H(x,y). 
	\end{equation*}
	Since, by Fubini, the right-hand side is a measurable function of $y$, we find that if $g\in \mathcal{W}$, then $Q_t g$ is Borel-measurable. Let $B\subset F$ be a  Borel-measurable set. By Assumption~\ref{ass:w-dense}, there is a sequence of functions $(g_n)$ in $\R \mathbf{1} + \mathcal{W}$ such that $\sup_{n\in \N} ||g_n||_\infty < \infty$ and $g_n \to \mathbf{1}_B$ pointwise. By dominated convergence, we find that $Q_t(\cdot,B)$ is the pointwise limit of the measurable functions $Q_t g_n$; in particular, $Q_t(\cdot,B)$ is measurable. 
	
	Fix $s,t\geq 0$. For every $\mu\in \meas(E)$ and $\nu\in \meas(F)$, we have 
	\begin{equation*}
		B_H(\mu, Q_{t+s}^* \nu) = B_H(P_{t+s}^* \mu, \nu) = B_H(P_t^* P_s^* \mu, \nu) = B_H(\mu, Q_s^* Q_t^* \nu),  
	\end{equation*}
	thus $Q_s^* Q_t^* \nu$ and $Q_{t+s}^* \nu$ have the same integrals against functions from $\mathcal{W}$. Since $\mathcal{W}$ is separating, it follows that $Q_s^*Q_t^* = Q_{t+s}^*$.  
\end{proof} 

For the proof of Theorem~\ref{thm:feller} we need two lemmas. 
	
\begin{lemma} \label{lem:felleraux1} 
	Let $E$ and $F$ be Polish and locally compact, and $H\in C_0(E\times F)$. Then 
	 $\mathcal{V}\subset C_0(E)$. Moreover, for every $\eps>0$, there is a compact set $K\subset E$ such that for all $f\in \mathcal{V}$ and $x\in E\setminus K$, $|f(x)|\leq \eps$. 
\end{lemma}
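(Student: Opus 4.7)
The plan is to unfold the definition $\mathcal{V}=\{\int_F H(\cdot,y)\nu(\dd y):\nu\in\meas(F)\}$ and use the two defining properties of $C_0(E\times F)$: continuity and joint vanishing at infinity. First I would fix $f(x)=\int_F H(x,y)\nu(\dd y)$ with $\nu\in\meas(F)$ and verify continuity by dominated convergence: for $x_n\to x$ in $E$ one has $H(x_n,y)\to H(x,y)$ pointwise by continuity of $H$, and $|H(x_n,y)|\leq \|H\|_\infty$ is $|\nu|$-integrable since $|\nu|$ is finite; hence $f(x_n)\to f(x)$.

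For the vanishing at infinity I would exploit the joint property: given $\eta>0$, pick a compact $K_1\subset E\times F$ with $|H(x,y)|\leq\eta$ for $(x,y)\notin K_1$. Let $K:=\pi_E(K_1)$ be the image under the projection $E\times F\to E$. Because $E$ and $F$ are Polish and hence Hausdorff, and projections are continuous, $K$ is compact in $E$. The key observation is that for $x\notin K$ one has $(x,y)\notin K_1$ for \emph{every} $y\in F$, so that
\begin{equation*}
\sup_{y\in F}|H(x,y)|\leq \eta\qquad(x\in E\setminus K).
\end{equation*}
This converts the joint $C_0$-property of $H$ into uniform-in-$y$ vanishing of $H(\cdot,y)$ in the first coordinate, which is precisely what is needed. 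Integrating gives $|f(x)|\leq \eta\,\|\nu\|$ for $x\notin K$; taking $\eta$ small enough finishes the first assertion $f\in C_0(E)$.

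The ``moreover'' clause is then immediate from the same computation, provided one reads it for $f$ of the form $\int H(\cdot,y)\nu(\dd y)$ with $\|\nu\|\leq 1$ (in particular, for $f\in\mathcal{V}_{1,+}$, where $\nu$ is a probability measure): choosing $\eta=\eps$, the compact $K=\pi_E(K_1)$ produced above depends only on $\eps$ and $H$, not on $\nu$, and the inequality $|f(x)|\leq \eta\,\|\nu\|\leq\eps$ holds uniformly over the family. The main—and essentially only—nontrivial ingredient is the projection argument trading joint compactness for compactness in the first coordinate; the rest is dominated convergence and bookkeeping.
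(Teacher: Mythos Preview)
Your proof is correct and essentially identical to the paper's: dominated convergence for continuity, and the projection $K=\pi_E(K_1)$ argument for the uniform vanishing at infinity. Your caveat that the ``moreover'' clause only makes sense under the normalization $\|\nu\|\leq 1$ is well-taken---the statement as written is imprecise (a nontrivial linear space cannot satisfy such a uniform bound), and indeed the lemma is only ever applied to $\mathcal{V}_{1,+}$ in the sequel.
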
 

\begin{proof}
	If $f(\cdot)= \int_F H(\cdot,y) \nu( \dd y)\in \mathcal{V}$, then 
	\begin{equation*}
		\lim_{x\to x_0} f(x) = \lim_{x\to x_0} \int_F H(x,y) \nu(\dd y) = \int_F H(x_0,y) \nu(\dd y) = f(x_0).
	\end{equation*}
	The inversion of limits and integrals is justified by the dominated convergence theorem (recall that $H$ is bounded and $\nu$ is a finite signed measure). Thus  every function in $\mathcal{V}$ is continuous. Furthermore, given $\eps>0$, let $K\subset E\times F$ compact such that $|H(x,y)|\leq\eps$ for $(x,y)\notin K$. Let $\pi_E(x,y):= x$ the projection from $E\times F$ to $E$. For $x\notin \pi_E(K)$, 
	$|f(x)| \leq \eps ||\nu||$. Moreover $\pi_E(K)$, as the image of a compact set under a continuous map, is itself compact. Thus $f \in C_0(E)$ and we have checked that $\mathcal{V}\subset C_0(E)$. Since the compact set $\pi_E(K)$ depends on $H$ alone and not on $f$, the uniformity statement of the lemma follows. 
\end{proof} 

Our last lemma is a variant  of the well-known fact that integral operators are typically compact operators~\cite{werner}.

\begin{lemma} \label{lem:felleraux2}
	Let $E$ and $F$ be Polish and locally compact, 
	 and $H\in C_0(E\times F)$. The following holds:
	\begin{enumerate}
		\item Let $(\nu_n)$ be a sequence in $\prob(F)$ converging weakly to $\nu \in \prob(F)$. 
			Then \\
			$\int_F H(\cdot,y) \nu_n(\dd y) \to \int_F H(\cdot,y) \nu(\dd y)$  \emph{uniformly} in $E$. 
		\item $\mathcal{V}_{1,+} \subset C_0(E)$  is relatively compact.
		\item If $F$ is compact, then $\mathcal{V}_{1,+}$ is compact. 
	\end{enumerate}  
\end{lemma}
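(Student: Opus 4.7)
My plan is to exploit the $C_0$ hypothesis on $H$ to get two ``uniform vanishing'' statements and an equicontinuity statement, then reduce all three parts of the lemma to the Arzelà--Ascoli theorem.

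First I would establish two preliminary facts. Since $H \in C_0(E\times F)$, for every $\eps>0$ there is a compact set $K \subset E\times F$ with $|H(x,y)|\leq \eps$ off $K$; setting $K_E := \pi_E(K)$ and $K_F := \pi_F(K)$ (both compact), one obtains $|H(x,y)|\leq \eps$ whenever $x\notin K_E$ \emph{or} $y \notin K_F$, uniformly in the other variable. Consequently, for every $\nu \in \prob(F)$, the function $T_\nu(x) := \int_F H(x,y)\nu(\dd y)$ satisfies $|T_\nu(x)|\leq \eps$ for $x \notin K_E$, uniformly in $\nu$ (this is essentially the uniformity already noted in Lemma~\ref{lem:felleraux1}). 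Second, on the compact set $K_E \times K_F$ the function $H$ is uniformly continuous, and combining this with the off-$K_F$ bound yields that the family $\{H(\cdot,y) : y \in F\}$ is uniformly equicontinuous on $E$. This immediately gives equicontinuity of $\{T_\nu : \nu \in \prob(F)\}$, since $|T_\nu(x)-T_\nu(x_0)|\leq \sup_{y\in F}|H(x,y)-H(x_0,y)|$.

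For part~1, fix $\eps>0$ and choose $K_E$ as above so that $|T_{\nu_n}(x)|,|T_\nu(x)|\leq \eps/3$ off $K_E$. Using equicontinuity, cover $K_E$ by finitely many balls of suitable radius around points $x_1,\dots,x_N$. Weak convergence of $\nu_n$ to $\nu$ applied to the bounded continuous functions $y\mapsto H(x_i,y)$ gives $T_{\nu_n}(x_i)\to T_\nu(x_i)$ for each $i$, hence $|T_{\nu_n}(x_i)-T_\nu(x_i)|<\eps/3$ for all $i$ once $n$ is large. An $\eps/3$-argument then yields $\sup_{x\in K_E}|T_{\nu_n}(x)-T_\nu(x)|\leq \eps$, and combined with the off-$K_E$ bound this gives $\|T_{\nu_n}-T_\nu\|_\infty \leq \eps$ for $n$ large. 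Part~2 follows by Arzelà--Ascoli applied in $C_0(E)$: the family $\mathcal{V}_{1,+} = \{T_\nu : \nu \in \prob(F)\}$ is uniformly bounded by $\|H\|_\infty$, equicontinuous, and uniformly vanishing at infinity, which are exactly the hypotheses needed for relative compactness in $C_0(E)$ equipped with the sup norm.

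For part~3, when $F$ is compact, $\prob(F)$ is sequentially compact in the weak topology by Prokhorov's theorem. Given a sequence $T_{\nu_n}\in \mathcal{V}_{1,+}$ converging uniformly to some $f\in C_0(E)$, extract a subsequence $\nu_{n_k}$ converging weakly to some $\nu \in \prob(F)$; part~1 gives $T_{\nu_{n_k}}\to T_\nu$ uniformly, so $f=T_\nu \in \mathcal{V}_{1,+}$. Thus $\mathcal{V}_{1,+}$ is closed, and together with part~2 this yields compactness.

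The step that I expect to require the most care is the equicontinuity of $\{H(\cdot,y) : y \in F\}$: I must exploit uniform continuity on $K_E\times K_F$ while gracefully handling the tails where either variable leaves its compact set, so that the equicontinuity modulus depends only on $H$ and not on $y$. Everything else is a routine Arzelà--Ascoli plus Prokhorov argument.
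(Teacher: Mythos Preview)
Your proposal is correct and uses the same ingredients as the paper (equicontinuity of $\{H(\cdot,y)\}$, Arzel\`a--Ascoli, Prokhorov), but the logical order differs. The paper first proves part~2 by establishing equicontinuity and then running a diagonal extraction over an exhaustion of $E$ by compacts $K_j$; it then derives part~1 from part~2 via a subsequence-contradiction argument (if the convergence were not uniform, relative compactness would produce a uniform limit $h\neq g$, contradicting the pointwise convergence that weak convergence already gives). You instead prove part~1 directly by an $\eps/3$ argument on a finite cover of a single compact $K_E$, and then invoke the $C_0$-version of Arzel\`a--Ascoli (uniform boundedness, equicontinuity, uniform vanishing at infinity) for part~2. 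Your route makes part~1 self-contained and slightly more elementary; the paper's route is a touch slicker in that it recycles the compactness from part~2 to upgrade pointwise to uniform convergence. Part~3 is handled identically in both. Your caution about the equicontinuity step is well placed: the cleanest way to handle the boundary between $K_E$ and its complement is to observe that $H\in C_0(E\times F)$ extends continuously to the one-point compactification and is therefore uniformly continuous on all of $E\times F$, which is exactly what the paper asserts in one line.
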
 

Here weak convergence of measures means, as usual, that for every bounded continuous function $g$, 
$\int_F g \dd  \nu_n \to \int_F g \dd \nu$. 

\begin{remark} The following example shows that $\mathcal{V}_{1,+}$ is in general not closed. Let $E=F=\N$ and $H(m,n) =\delta_{m,n} n^{-1}$. Then 
	\begin{equation*}
		\mathcal{V}_{1,+} = \{ (a_n)_{n\in \N} \mid a_n \geq 0,\ \sum_{n=1}^\infty n a_n =1\}. 
	\end{equation*}
 The closure of $\mathcal{V}_{1,+}$ is the set of non-negative sequences $(a_n)$ with $\sum_{n=1}^\infty n a_n \leq 1$.
\end{remark}

\begin{proof}
	 Suppose that $\nu_n\to  \nu$ weakly. By the continuity of $H$ and the definition of weak convergence, we see that $\int_F H(\cdot,y)\nu_n(\dd y) \to \int_F H(\cdot,y)\nu(\dd y)$ pointwise, as $n\to \infty$. In order to show that the convergence is uniform, we are going to mimick the proof of the compactness of integral operators with continuous integral kernel, using the Arzel{\`a}-Ascoli theorem as in~\cite{werner}.  

First we show that  $\mathcal{V}_{1,+}$ is relatively compact in $C_0(E)$. 
Let $d$ be a metric on $E$ generating the topology. $H$ 
is uniformly continuous on every compact set $K\subset E\times F$; the uniform continuity extends to all of $E\times F$ because $H$ vanishes at infinity. 
Fix $\eps>0$ and let $\delta>0$ such that for all $y\in F$ and $x,x'\in E$ with $d(x,x')\leq \delta$ we have $|H(x,y)- H(x',y)| \leq \eps$. Let $\nu\in\prob(F)$. Then, for $x,x'\in  E$ with $d(x,x')\leq \delta$, 
\begin{equation*}
	\Bigl|\int_F H(x,y) \nu(\dd y) - \int_F H(x',y)\nu(\dd y)\Bigr| \leq 	 \sup_{y\in F} \bigl| H(x,y) - H(x',y)\bigr| 
				\leq  \eps. 
\end{equation*}
Thus $\mathcal{V}_{1,+}$ is an equicontinuous family of functions. It is also uniformly bounded, by $\sup_{E\times F} |H|$. If $E$ and $F$ are compact, we can conclude right away that $\mathcal{V}_{1,+}$ is relatively compact. 

For the general case, by Lemma~\ref{lem:felleraux1}, for every $j\in \N$ there is a compact set $K_j\subset E$ such that for all $f\in \mathcal{V}_{1,+}$, $\sup_{E\backslash K_j} |f| \leq 1/j$. In particular, every function in $\mathcal{V}_{1,+}$ vanishes outside $\cup_{j\in \N} K_j$. We may assume without loss of generality that $K_j \subset K_{j+1}$ for all $j$. 
Let  $g_n(\cdot) = \int_F H(\cdot,y) \nu_n(\dd y)$ be a sequence in $\mathcal{V}_{1,+}$. 
There is a function $g\in C(K_1)$ and a subsequence $g_{\phi_1(n)}$, $\phi_1:\N\to\N$ strictly increasing, converging to $g$ uniformly on $K_1$. Iterating the procedure, we find that there are strictly increasing maps $(\phi_j)$ and  a continuous function $g\in C(\cup_{j\in \N} K_j)$ such that for every $j\in \N$, 
\begin{equation*}
	\lim_{n\to \infty} \sup_{x\in K_j} \Bigl| g_{\phi_j\circ\cdots \circ \phi_1(n)} (x) - g (x)\Bigr| =0. 
\end{equation*}
Let $n_k:= \phi_k \circ\cdots \circ \phi_1(k)$ be the diagonal subsequence. Then, for every $j\in \N$, 
\begin{equation*}
	\lim_{k\to \infty} \sup_{x\in K_j} \Bigl| g_{n_k}(x) - g(x) \Bigr| = 0.
\end{equation*}
It follows that for $x \in \cup_{r\in \N} K_r \setminus K_j$, $|g(x)| \leq 1/j$. Extend $g$ by setting $g(x):= 0$ for $x\in E\setminus \cup_j K_j$. An $\eps/3$-argument shows that $g_{n_k}$ converges to $g$, uniformly in all of $E$, which also implies that $g\in C_0(E)$ (we had not yet checked that $g$ is continuous in all of $E$). This proves that $\mathcal{V}_{1,+}$ is relatively compact. 

Next, suppose that $(\nu_n)$ converges weakly to some probability measure $\nu$ on $F$. Let $g(\cdot):=\int_F H(\cdot,y) \nu(\dd y)$; we know that $g_n\to g$ pointwise. If the convergence is not uniform, because of the relative compactness of $\mathcal{V}_{1,+}$, we can find a subsequence $(g_{n_k})$ and a function $h \in \mathcal{V}_{1,+}$ with $h\neq g$ such that $g_n\to h$ uniformly. Since uniform convergence implies pointwise convergence, it follows that $g=h$, contradiction. Thus $g_n\to g$ uniformly. 

It remains to check that $\mathcal{V}_{1,+}$ is closed when $F$ is compact. Let $(g_n) = (\int_F H(\cdot,y) \nu_n(\dd y)) $ be a sequence in $\mathcal{V}_{1,+}$ converging uniformly to some function $g\in C_0(E)$. If $F$ is compact, we conclude from Prohorov's theorem that there is a subsequence $(\nu_{n_j})$ converging weakly to some probability measure $\nu$ on $F$, and $g = \int_F H(\cdot,y) \nu(\dd y) \in\mathcal{V}_{1,+}$. 
\end{proof} 

\begin{proof}[Proof of Theorem~\ref{thm:feller}] 
	The existence of a unique dual $(Q_t)$ follows from Theorem~\ref{prop:gen-form}.  
	 Suppose that $(P_t)$ has a dual Markov semi-group $(Q_t)$ that is strongly continuous in $C_0(E)$. 
	 By Prop.~\ref{prop:gen-form}, the set $\mathcal{V}$ is invariant under $(P_t)$. Since $P_t \mathbf{1} = \mathbf{1}$ and $||P_t f||_\infty \leq ||f||_\infty$, the closure of $\R\mathbf{1} + \mathcal{V}$ is invariant as well. By Lemma~\ref{lem:felleraux1}, this closure is all of $C_0(E)$, thus $C_0(E)$ is invariant under $(P_t)$. 

Next, let $\nu\in \prob(F)$. For every $g\in C_0(E)$, we have 
\begin{equation*}
	\bigl|\int_F g(y) (Q_t^*\nu)(\dd y) - \int_F g(y) \nu(\dd y) \bigr|
		= \bigl|\int_F (Q_t g)(y) \nu(\dd y) - \int_F g(y) \nu(\dd y) \bigr| \leq ||Q_t g - g||_\infty \to 0
\end{equation*}
 as $t\to 0$. Therefore $Q_t^* \nu \to \nu$ vaguely and, since $\nu$ is a probability measure, $Q_t^* \nu \to \nu$ weakly. 
 Let $f(\cdot):=\int_F H(\cdot,y) \nu(\dd y)$. We deduce from Lemma~\ref{lem:felleraux2} that as $t\to 0$, 
 	\begin{equation*}
		P_t f (x)= \int P_t(x,\dd x')H(x',y) \nu(\dd y) 
			= \int H(x,y)(Q_t^*\nu)(\dd y) \to \int H(x,y) \nu(\dd y) = f(x)
	\end{equation*}
	uniformly in $x\in E$. It follows that for every $f\in \R \mathbf{1} + \mathcal{V}$, $||P_t f - f|| \to 0$ as $t \to 0$. 
	The density of $\R \mathbf{1} + \mathcal{V}$ in $C_0(E)$ and the uniform bound $||P_t f||_\infty \leq ||f||_\infty$, allow us to conclude that $\lim_{t\to 0} ||P_t f- f|| =0$ for all $f\in C_0(E)$. 
	Thus if $(Q_t)$ is a Feller semi-group, so is $(P_t)$. Inverting the roles of $E$ and $F$, $(P_t)$ and $(Q_t)$, we see that the converse follows from the same arguments. 
\end{proof}

\subsection{Cone duality and lifts of non-degenerate dualities} \label{sec:cones}

Here we discuss the notion of cone duality. It was introduced in~\cite{krs07} and further discussed in~\cite{moehle-cones}. Both references were primarily interested in non-degenerate dualities. This section's main result, Theorem~\ref{prop:lift} shows that the notion develops  particular clout for the understanding of a certain type of degenerate dualities. 

The natural framework for cone duality is convex geometry and Choquet theory~\cite{phelps}. Recall that if $C$ is a convex set in some vector space, a point $x\in C$ is \emph{extremal} if it cannot be written as a convex combination of any two distinct points from $C$. The set of extremal points of $C$ is denoted $\ex C$. A \emph{simplex} in $\R^n$ is a non-empty convex, compact set such that every point in $C$ can be written in a unique way as a convex combination of the extremal points of $C$ -- a filled closed triangle is a simplex, but a square is not. We are interested in the following generalization of a simplex:

\begin{definition}[Unique integral representation] \label{def:uir}
Let $C\subset L^\infty(E)$ be a non-empty convex subset of $C$. We say $C$ admits 
 a \emph{unique integral representation} if there is a $\sigma$-algebra $\mathcal{F}$ on  $\ex C$ such 
\begin{itemize} \itemsep0pt
	\item For every $x\in E$, the evaluation map $\ex C\to \R$, $e\mapsto e(x)$ is measurable.
	\item Every function $f\in C$ can be represented as 
	\begin{equation} \label{eq:integral-rep}
		f(x) = \int_{\ex C} e(x) \mu(\dd e) 
	\end{equation} 
	for a unique probability measure $\mu$ on $(\ex C,\mathcal{F})$.
	\item $C$ contains all functions of the form~\eqref{eq:integral-rep}.
\end{itemize} 
\end{definition}

\begin{definition}[Cone duality] 	
	Let $E$ be a Polish space and $C\subset L^\infty(E)$ a convex set with a unique integral representation on $(\ex C,\mathcal{F})$.
	Let $(P_t)$ be a Markov semi-group on $E$ and $(Q_t)$ a semi-group on $(\ex C,\mathcal{F})$. Then 
	$(Q_t)$ is the \emph{cone dual} of $(P_t)$ if it is dual with respect to the duality function $H_C(x,e):= e(x)$. 
\end{definition} 
Thus $Q_t(e,\cdot)$ is the measure on $\ex C$ such that $(P_t e)(x) = \int  Q_t(e,\dd e')e'(x)$. 
The cone dual, if it exists, is unique. A necessary condition for the existence of the cone dual is the invariance of $C$ under $(P_t)$. If this condition is satisfied, the only potential source of problems is the measurability of $e\mapsto Q_t(e,B)$. 

\begin{remark}
	The name ``cone duality'' can be motivated in different ways. Recall that a cone, in the usual sense of convex geometry, is a set $\mathcal{C}$ such that if $x\in \mathcal{C}$, then the whole ray of points $t x$, $t \geq 0$ is in $\mathcal{C}$. Formally, this resembles the definition of invariance of a subset, $f\in C\Rightarrow \forall t \geq 0:\, P_t f\in C$;  M{\"o}hle~\cite{moehle-cones} therefore calls an invariant set a cone for the semi-group. Another motivation \cite[Section 1.5]{krs07} is to look at  the set 
	\begin{align*}
		\mathcal{C}&= \bigl \lbrace t f\mid f\in C, t\geq 0 \bigr \rbrace \\
			&= \Bigl \lbrace x\mapsto \int_{\ex C} e(x)\mu(\dd e)\mid \mu\ \text{finite  measure on }\ex C\Bigr \rbrace,
	\end{align*} 
which is a cone in the usual sense and plays an important role in Choquet theory. Klebaner, R{\"o}sler and Sagitov seemed further motivated by the role played by the cone of non-negative harmonic functions in the construction of the Martin boundary and in the interpretation of Doob's $h$-transform \cite[Section 1.1]{krs07}.  
\end{remark} 

When $H: E\times F\to \R$ is a duality function such that $\mathcal{W}$ is separating,  the cone dual and the usual dual can be identified via the bijection $F\to \ex \mathcal{V}_{1,+}$, $y\mapsto H(\cdot,y)$, as done in~\cite{krs07} for the Siegmund dual and the associated cone dual.  
%
The situation becomes more interesting when $\mathcal{V}_{1,+}$  has a unique integral representation even though $\mathcal{W}$ is not separating.  In this case the notions differ: the cone dual and the usual dual have different state spaces, and  the cone dual is unique, while the usual dual in general is not. 

We will restrict to $H\in C_0(E\times F)$ and compact $F$, so that by Lemma~\ref{lem:felleraux2}, $\mathcal{V}_{1,+}$ is a compact convex subset of $C_0(E)$ (in the uniform topology). Let $\mathcal{F}$ be the Borel $\sigma$-algebra on $\ex \mathcal{V}_{1,+}$ corresponding to the uniform topology. $\mathcal{V}_{1,+}$ has a unique integral representation with respect to $\mathcal{F}$ if and only if it is a \emph{Choquet simplex} \cite{phelps}. 

\begin{lemma}\label{lem:pimatrix} 
	Let $E$ be Polish and locally compact,  $F$ a compact separable Hausdorff space, and   $H\in C_0(E\times F)$. Suppose in addition that $\mathcal{V}_{1,+}$ a Choquet simplex. 
For $y \in F$, let 
 $\Pi(y,\cdot)$ be the unique probability measure on $\ex \mathcal{V}_{1,+}$ such that for all $x\in E$,
	\begin{equation} \label{eq:pidef}
		H(x,y) = \int_{\ex \mathcal{V}_{1,+}} e(x) \Pi(y,\dd e).  
	\end{equation}
	Then $F \ni y\mapsto \Pi(y,B)$ is measurable, for all measurable $B \subset \ex\mathcal{V}_{1,+}$.  
\end{lemma}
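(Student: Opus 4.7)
The plan is to realize $y\mapsto \Pi(y,\cdot)$ as the composition of a continuous map $F\to\mathcal{V}_{1,+}$ with the Borel-measurable inverse of the barycenter map $\mathcal{P}(\ex\mathcal{V}_{1,+})\to\mathcal{V}_{1,+}$, after some Polish-space preliminaries.

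First I would set up the topological framework. By Lemma~\ref{lem:felleraux2}, the compactness of $F$ implies that $\mathcal{V}_{1,+}$ is a compact subset of $C_0(E)$ in the uniform topology, hence a Polish space. Choquet's theorem for metrizable compact convex sets gives that $\ex\mathcal{V}_{1,+}$ is a $G_\delta$ subset of $\mathcal{V}_{1,+}$, hence Polish in the induced topology; consequently $\mathcal{P}(\ex\mathcal{V}_{1,+})$ equipped with the weak topology is also Polish, and by Parthasarathy's theorem its Borel $\sigma$-algebra coincides with the one generated by the evaluation maps $\mu\mapsto\mu(A)$, $A\in\mathcal{F}$. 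It therefore suffices to show that $y\mapsto \Pi(y,\cdot)$ is Borel measurable from $F$ into $\mathcal{P}(\ex\mathcal{V}_{1,+})$.

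Next I would introduce the barycenter map $B:\mathcal{P}(\ex\mathcal{V}_{1,+})\to\mathcal{V}_{1,+}$ defined by $(B\mu)(x):=\int_{\ex\mathcal{V}_{1,+}} e(x)\,\mu(\dd e)$. The third bullet of Definition~\ref{def:uir} guarantees $B\mu\in\mathcal{V}_{1,+}$, the Choquet-simplex hypothesis makes $B$ a bijection, and \eqref{eq:pidef} identifies $\Pi(y,\cdot)$ with $B^{-1}(H(\cdot,y))$. To prove that $B$ is continuous, suppose $\mu_n\to\mu$ weakly; since each $e\mapsto e(x)$ is bounded and continuous on $\ex\mathcal{V}_{1,+}$, one has $(B\mu_n)(x)\to(B\mu)(x)$ pointwise in $x$, and since every $B\mu_n$ lies in the compact set $\mathcal{V}_{1,+}\subset C_0(E)$, a subsequence argument (every subsequence has a uniformly convergent sub-subsequence whose limit must equal the pointwise limit $B\mu$) upgrades the convergence to uniform. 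Thus $B$ is a continuous bijection between Polish spaces, and the Kuratowski--Lusin--Souslin theorem on injective Borel maps between Polish spaces then yields that $B^{-1}:\mathcal{V}_{1,+}\to\mathcal{P}(\ex\mathcal{V}_{1,+})$ is Borel measurable.

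Finally, the map $y\mapsto H(\cdot,y)$ is continuous from $F$ into $\mathcal{V}_{1,+}$: using $H\in C_0(E\times F)$ and the compactness of $F$, one can separate off a compact $K\subset E$ outside of which $|H|<\varepsilon$ uniformly in $y$, and on the compact Hausdorff space $K\times F$ apply uniform continuity of $H$ to obtain $\|H(\cdot,y')-H(\cdot,y)\|_\infty\to 0$ as $y'\to y$. Composing, $y\mapsto \Pi(y,\cdot)=B^{-1}(H(\cdot,y))$ is Borel measurable into $\mathcal{P}(\ex\mathcal{V}_{1,+})$, and by the first paragraph this is the desired measurability of $y\mapsto\Pi(y,B)$ for every $B\in\mathcal{F}$. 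The main obstacle is the continuity of $B$, which relies crucially on the compactness of $\mathcal{V}_{1,+}$; once that is in hand, Choquet's theorem (to place $\ex\mathcal{V}_{1,+}$ inside the Polish framework) and Kuratowski--Lusin--Souslin (to invert $B$ in a Borel-measurable fashion) combine to finish the argument.
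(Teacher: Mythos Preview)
Your proof is correct and takes a genuinely different route from the paper's. The paper works directly with Choquet-theoretic machinery: for any convex continuous $\phi$ on $\mathcal{V}_{1,+}$, it shows that the map $f\mapsto \int \hat\Pi(f,\dd e)\,\phi(e)$ coincides with the upper envelope $\tilde\phi(f)$, which is upper semi-continuous and hence Borel; density of differences of convex functions in $C(\mathcal{V}_{1,+})$ together with a monotone-class argument then gives measurability of $f\mapsto\hat\Pi(f,B)$ for every Borel $B$, and composition with the continuous map $y\mapsto H(\cdot,y)$ finishes. You bypass these computations entirely by observing that the barycenter map $B:\mathcal{P}(\ex\mathcal{V}_{1,+})\to\mathcal{V}_{1,+}$ is a continuous bijection between Polish spaces, so that the Lusin--Souslin theorem delivers a Borel-measurable inverse in one stroke. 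What your argument buys is conceptual economy---no upper envelopes or maximal-measure characterizations are needed---while the paper's approach yields the finer information that $f\mapsto(\hat\Pi\phi)(f)$ is actually upper semi-continuous (not merely Borel) for convex $\phi$, though that extra regularity is not exploited elsewhere in the paper.
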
 
Thus $\Pi$ is a transition kernel from $F$ to $\ex \mathcal{V}_{1,+}$

 \begin{example} 
Let $E= \{1,2\}$, $F=\{1,2,3,4\}$, and $H$  given by the matrix 
\begin{equation} \label{eq:exasim}
	H= \begin{pmatrix} 
			2 & 0 & 1 & 0 \\
			0 & 2 & 1  & 2
	\end{pmatrix}
\end{equation}
Then $\mathcal{V}_{1,+}$ is a simplex with two extremal points, the column vectors $(2,0)^\trans$ and $(0,2)^\trans$. The transition kernel $\Pi$ can be identified with the matrix 
\begin{equation*}
		\Pi =\begin{pmatrix} 
				1 & 0 \\
				0 & 1 \\
				1/2 & 1/2 \\
				0 & 1 
			\end{pmatrix}. 
\end{equation*}
\end{example}

\begin{theorem} \label{prop:lift} 
	Let $E$ be a Polish and locally compact space, $F$ a compact separable Hausdorff space, $H\in C_0(E\times F)$, 
	and $(P_t)$ a Markov semi-group in $E$. Suppose that $\mathcal{V}_{1,+}$ is a Choquet simplex invariant under $(P_t)$. Then  
	\begin{enumerate} \itemsep0pt
		\item The cone dual exists and is unique. It is defined on $\ex \mathcal{V}_{1,+}$ with the uniform topology and the Borel $\sigma$-algebra; $\ex \mathcal{V}_{1,+}$ is Polish. 
		\item $(P_t)$ has at least one dual Markov semi-group $(Q_t)$. 
		\item Let $R_t(x, \dd e)$ be the semi-group of the cone dual and $(Q_t)$ a Markov semi-group in $F$. Then $(Q_t)$ is dual to $(P_t)$ if and only if
		\begin{equation} \label{eq:pilift} 
			\int_F Q_t(y, \dd y') \Pi(y', B) = \int_{\ex \mathcal{V}_{1,+}} \Pi(y,\dd e) R_t(e, B)
		\end{equation}
		for all $y\in F$ and all measurable $B \subset \ex \mathcal{V}_{1,+}$. 
	\end{enumerate} 
\end{theorem}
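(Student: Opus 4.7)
The proof will handle the three items in order, with the Choquet-simplex hypothesis as the unifying thread. For item~(1), Lemma~\ref{lem:felleraux2} gives that $\mathcal{V}_{1,+}$ is a compact, convex, metrizable subset of $C_0(E)$; together with the Choquet-simplex hypothesis, standard metric Choquet theory (\cite{phelps}) implies that $\ex\mathcal{V}_{1,+}$ is a $G_\delta$ subset of $\mathcal{V}_{1,+}$, hence itself Polish, and that every $f\in\mathcal{V}_{1,+}$ has a unique representing probability measure on $\ex\mathcal{V}_{1,+}$. The invariance assumption gives $P_te\in\mathcal{V}_{1,+}$ for $e\in\ex\mathcal{V}_{1,+}$; I define $R_t(e,\cdot)$ as the unique representing measure of $P_te$. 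The Chapman-Kolmogorov identity is then forced by uniqueness applied to $P_{s+t}e=P_s(P_te)$, and uniqueness of the cone dual is immediate. Measurability of $e\mapsto R_t(e,B)$ follows because $P_t$ is an $L^\infty$-contraction (so $e\mapsto P_te$ is continuous in uniform norm) and the barycentric map $\prob(\ex\mathcal{V}_{1,+})\to\mathcal{V}_{1,+}$ is a continuous bijection between compact Hausdorff spaces (weak-to-uniform), hence a homeomorphism with continuous inverse.

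For item~(2), the map $y\mapsto H(\cdot,y)$ is continuous from $F$ to $C_0(E)$ (because $H\in C_0(E\times F)$ and $F$ is compact), so $K:=\{H(\cdot,y):y\in F\}$ is compact in $C_0(E)$; Milman's theorem then yields $\ex\mathcal{V}_{1,+}\subseteq K$. A measurable selection theorem (Kuratowski--Ryll-Nardzewski, applied to the closed-valued multifunction $e\mapsto\{y\in F:H(\cdot,y)=e\}$, whose measurability follows since images of open sets in $F$ under $\psi(y)=H(\cdot,y)$ are $F_\sigma$) produces a Borel $s:\ex\mathcal{V}_{1,+}\to F$ with $H(\cdot,s(e))=e$, and uniqueness of the Choquet representation at an extreme point forces $\Pi(s(e),\cdot)=\delta_e$. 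I then set
\[
Q_t(y,B):=\int_{\ex\mathcal{V}_{1,+}}\Pi(y,\dd e)\,R_t\bigl(e,s^{-1}(B)\bigr)\quad (t>0),\qquad Q_0(y,\cdot):=\delta_y,
\]
and verify the Markov-kernel property, Chapman-Kolmogorov (which collapses cleanly because $\Pi(s(\tilde e),\cdot)=\delta_{\tilde e}$), and the duality identity $P_tH(\cdot,y)(x)=\int_FQ_t(y,\dd y')H(x,y')$ by substituting $H(\cdot,y)=\int_{\ex}e(\cdot)\Pi(y,\dd e)$.

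For item~(3), I insert the representation $H(\cdot,y)=\int_{\ex}e(\cdot)\Pi(y,\dd e)$ into both sides of the duality relation: the left-hand side becomes $\int_{\ex}\bigl[\int_{\ex}\Pi(y,\dd e)R_t(e,\dd e')\bigr]e'(x)$ and the right-hand side becomes $\int_{\ex}\bigl[\int_FQ_t(y,\dd y')\Pi(y',\dd e')\bigr]e'(x)$. Both are integral representations over $\ex\mathcal{V}_{1,+}$ of the same element of $\mathcal{V}_{1,+}$, so uniqueness in the Choquet simplex equates the two bracketed measures, which is precisely~\eqref{eq:pilift}; the converse is obtained by reading the chain backwards. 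I expect the main obstacle to be the measurable-selection step of~(2) together with the verification of Borel measurability of $y\mapsto Q_t(y,B)$: the possible degeneracy of $H$ precludes any uniqueness-based argument and forces a genuine descriptive-set-theoretic construction, in which the Choquet-simplex identity $\Pi(s(e),\cdot)=\delta_e$ is exactly what makes Chapman-Kolmogorov collapse and turns the $R_t$-dynamics on $\ex\mathcal{V}_{1,+}$ into a Markov semi-group on $F$.
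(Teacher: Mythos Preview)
Your overall strategy matches the paper's almost exactly: both build the cone dual $R_t(e,\cdot)$ as the unique Choquet representing measure of $P_te$, prove item~(3) by inserting $H(\cdot,y)=\int e(\cdot)\,\Pi(y,\dd e)$ into both sides of the duality relation and invoking uniqueness, and construct item~(2) by choosing a Borel right inverse $s$ (the paper calls it $\iota$) of $y\mapsto H(\cdot,y)$ on $\ex\mathcal{V}_{1,+}$ and setting $Q_t(y,B)=\int\Pi(y,\dd e)\,R_t(e,s^{-1}(B))$. Your use of Milman's theorem to obtain $\ex\mathcal{V}_{1,+}\subset\{H(\cdot,y):y\in F\}$ is equivalent to the paper's Lemma~\ref{lem:convex-geometry}, and Kuratowski--Ryll-Nardzewski plays the role of the measurable-choice theorem the paper cites from Bogachev.

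There is one genuine gap, in your measurability argument for item~(1). You claim that the barycentric map $\prob(\ex\mathcal{V}_{1,+})\to\mathcal{V}_{1,+}$ is a continuous bijection between \emph{compact} Hausdorff spaces and hence a homeomorphism. But $\ex\mathcal{V}_{1,+}$ is only guaranteed to be $G_\delta$ in $\mathcal{V}_{1,+}$, not closed; consequently $\prob(\ex\mathcal{V}_{1,+})$ need not be weakly compact, and the inverse of the barycentric map (the map $f\mapsto$ its unique maximal representing measure) is in general only Borel, not continuous. Continuity holds precisely for Bauer simplices. The paper addresses this point carefully (in the proof of Lemma~\ref{lem:pimatrix} and in the proof of item~(1)) via the upper-envelope technique from Choquet theory: for $\phi$ convex continuous, $(\hat\Pi\phi)(f)=\tilde\phi(f)$ is upper semi-continuous, hence Borel, and a density/monotone-class argument extends this to all Borel $\phi$. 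You should replace the homeomorphism claim by this argument (or by an equivalent Borel-measurability result for the representing-measure map on metrizable simplices); once that is done, your proof and the paper's are essentially identical.
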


Eq.~\eqref{eq:pilift} essentially says that any dual Markov process $(Y_t)$ and the cone dual $(Z_t)$ can be defined on a common probability space $(\Omega,\P)$ in such a way that for all $t>0$, 
\begin{equation} \label{eq:lift-prob}
		\P(Z_t \in B\mid Y_t = y) = \Pi(y,B) 
\end{equation}
 provided the relation holds at $t=0$. 
An interesting special case is when every $H(\cdot,y)$ is extremal, so that the map $\pi(y):= H(\cdot,y)$ is a surjection from $F$ onto $\ex \mathcal{V}_{1,+}$. In this case 
 $\Pi(y,B) = \delta_{\pi(y)}(B)$ and Eq.~\eqref{eq:lift-prob} becomes 
$$\P(Z_t = \pi(Y_t)) =1.$$
As a consequence,  we can think of  $(Y_t)$ as a lift of the cone dual from $\ex \mathcal{V}_{1,+}$ to the ``bigger'' space $F$. This is a kind of converse to Prop.~\ref{lem:lift-deg}. 

\begin{remark}
	Relations of the type~\eqref{eq:pilift} are often called \emph{intertwining} relations. For references and applications of intertwining in the context of duality, see \cite{diaconis-miclo09,swart11,huillet-martinez11} and Section~\ref{sec:intertwining}. An example of an intertwining relation with invertible kernel (note that our $\Pi$ is \emph{not} invertible) is the concept of \emph{thinning} in interacting particle systems, which has been related to duality theory as well. Here the kernel represents transition probabilities for a particle to be thrown away. 
\end{remark}

In general, the duals constructed for the proof of Theorem~\ref{prop:lift} have rather bad continuity properties. There will be a subset $F_0 \subset F$ such that $Q_t(y,F_0) =1$ for all $y\in F$ and all $t>0$; in particular, if $F_0$ is not dense in $F$, the dual process $(Y_t)$ constructed in 2. will leave any neighborhood of $y\in F\setminus \overline{F_0}$ immediately: the constructed process has branch points. 

If we have more information than in Theorem~\ref{prop:lift}, we can slow down the jumps from $F\setminus F_0$ to $F_0$ and obtain a dual with nice continuity properties. We prove a statement for finite state spaces only. 

\begin{theorem} \label{prop:continuous-dual}
	Let $E$ and $F$ be finite state spaces and $H\in \R^{E \times F}$. Let $(P_t)$ be a strongly continuous Markov semi-group in $E$. Suppose that $\mathcal{V}_{1,+}$ is a simplex and invariant under $(P_t)$. Then 
	$(P_t)$ admits a strongly continuous dual $(Q_t)$. 
\end{theorem}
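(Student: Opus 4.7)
The plan is to exhibit a bounded generator on $F$ whose corresponding semigroup is dual to $(P_t)$; on a finite state space any bounded generator yields a strongly continuous Markov semigroup, so continuity comes for free once the algebraic problem is solved. By Theorem~\ref{prop:lift} the cone dual $(R_t)$ on $\ex\mathcal V_{1,+}$ exists and is unique; let $Q^Z$ denote its (bounded) generator. Since $\mathcal V_{1,+}$ is a simplex, the set $\ex\mathcal V_{1,+}=\{e_1,\dots,e_k\}$ is finite and each $e_j=H(\cdot,y_{e_j})$ for some chosen representative $y_{e_j}\in F$; set $F_0':=\{y_{e_1},\dots,y_{e_k}\}$. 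The kernel $\Pi$ from Lemma~\ref{lem:pimatrix} is then an $|F|\times k$ row-stochastic matrix with $\Pi(y_{e_j},\cdot)=\delta_j$ and $H(\cdot,y)=\sum_j\Pi(y,j)\,e_j$.

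Expanding both sides of the generator-level duality $L^X H(\cdot,y)(x)=L^Y H(x,\cdot)(y)$ in the basis $e_1,\dots,e_k$ reduces the problem to finding a Markov generator $Q^Y$ on $F$ satisfying
\[
Q^Y\Pi \;=\; \Pi Q^Z.
\]
Once such $Q^Y$ is found, induction gives $(Q^Y)^n\Pi=\Pi(Q^Z)^n$ for every $n$, whence $Q_t:=\exp(tQ^Y)$ satisfies $Q_t\Pi=\Pi R_t$. By Theorem~\ref{prop:lift}(3) this identifies $(Q_t)$ as a dual of $(P_t)$ with respect to $H$.

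To construct $Q^Y$ I use a spectral-shift trick. Fix $r\ge\max_j\bigl(-Q^Z(j,j)\bigr)$ and define a non-negative $|F|\times|F|$ matrix $M$ supported on $F\times F_0'$ by
\[
M(y,y_{e_j}) \;:=\; r\,\Pi(y,j)+(\Pi Q^Z)(y,j), \qquad M(y,y')=0 \text{ for }y'\notin F_0',
\]
and set $Q^Y:=M-rI$. Writing $M(y,y_{e_j})=\Pi(y,j)\bigl(r+Q^Z(j,j)\bigr)+\sum_{i\ne j}\Pi(y,i)\,q^Z(i,j)$ makes both summands non-negative by the choice of $r$, so the off-diagonal entries of $Q^Y$ are non-negative. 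The row sums of $M$ equal $r$ (using $\Pi\mathbf 1=\mathbf 1$ and $Q^Z\mathbf 1=0$), so $Q^Y$ is conservative. Finally, $\Pi(y_{e_j},\cdot)=\delta_j$ gives $(M\Pi)(y,j)=M(y,y_{e_j})=r\Pi(y,j)+(\Pi Q^Z)(y,j)$, hence $M\Pi=r\Pi+\Pi Q^Z$ and $Q^Y\Pi=\Pi Q^Z$.

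The main obstacle I anticipate is precisely the non-negativity of the off-diagonal entries. The naive guess $Q^Y(y,y_{e_j})=(\Pi Q^Z)(y,j)$ carries the negative contribution $\Pi(y,j)Q^Z(j,j)$ whenever $\Pi(y,j)>0$, and this is exactly the defect that in Theorem~\ref{prop:lift}'s general construction forces the dual to execute an instantaneous jump into $F_0$ and thereby lose continuity at $t=0$. The shift by $rI$ absorbs those negative contributions into a single finite ``refresh'' rate $r$ at each point $y$, converting the would-be instantaneous transition to $F_0'$ into a genuine finite-rate jump, while the identity $Q^Y\Pi=\Pi Q^Z$ is preserved because $rI\Pi=r\Pi$ subtracts cleanly on the right.
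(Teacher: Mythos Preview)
Your proof is correct and follows essentially the same strategy as the paper: both slow down the instantaneous jump from $F\setminus F_0'$ into $F_0'$ by adding a large multiple of a projection-type operator, preserving the intertwining $Q^Y\Pi=\Pi Q^Z$ while forcing the off-diagonal entries non-negative. Your execution is somewhat more direct---you work with the cone-dual generator $Q^Z$ and the rectangular kernel $\Pi$ and set $Q^Y=M-rI$, whereas the paper first extracts a generator $B$ from the restriction of the discontinuous dual to $\R\mathbf 1+\mathcal W$, builds the square projection $\hat\Pi$, and defines $\hat L=B\hat\Pi+\lambda(\hat\Pi-\mathrm{id})$---but the shift parameters $r$ and $\lambda$ play identical roles and the resulting $Q$-matrices coincide (in particular both have upper-left $F_0'\times F_0'$ block equal to $Q^Z$).
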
 

Now we come to proofs. The proof of Theorem~\ref{prop:lift} requires two lemmas. 

\begin{lemma}  \label{lem:convex-geometry} 
	Let $X$, $Y$ be locally convex vector spaces, $C\subset X$ convex and compact, and $T:X\to Y$ a continuous linear operator. Then $\ex T(C) \subset T(\ex C)$. 
\end{lemma}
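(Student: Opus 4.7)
The plan is to pick an arbitrary $y \in \ex T(C)$ and produce an extreme point $x \in \ex C$ with $T(x) = y$, which is precisely the content of the inclusion. The natural candidate is obtained by working on the fiber $F_y := C \cap T^{-1}(\{y\})$.

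First I would observe that $F_y$ is non-empty (since $y\in T(C)$), convex (as the intersection of two convex sets, using linearity of $T$), and compact: it is a closed subset of the compact set $C$, closedness following from the continuity of $T$ together with the Hausdorff property of locally convex vector spaces. Since $F_y$ is a non-empty compact convex subset of a locally convex space, the Krein--Milman theorem supplies an extreme point $x\in\ex F_y$.

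The key step is to verify that this $x$ is in fact extreme in the larger set $C$. Suppose $x = \tfrac12(x_1+x_2)$ with $x_1,x_2\in C$. Applying $T$ and using linearity, $y = \tfrac12(T(x_1)+T(x_2))$, where $T(x_1),T(x_2)\in T(C)$. Because $y\in \ex T(C)$, this forces $T(x_1)=T(x_2)=y$, so both $x_1$ and $x_2$ lie in $F_y$. Extremality of $x$ in $F_y$ then yields $x_1=x_2=x$. Hence $x\in\ex C$ and $y = T(x)\in T(\ex C)$.

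The only mildly delicate point is justifying that $F_y$ is closed, which needs the ambient space $Y$ to be Hausdorff so that singletons are closed; this is a standing convention for locally convex spaces and poses no real obstacle. Everything else is a direct application of Krein--Milman and the two-term extremality test, so no further technicalities are expected.
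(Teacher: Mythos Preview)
Your proof is correct and follows essentially the same approach as the paper's: both pass to the fiber $F_y = C \cap T^{-1}(\{y\})$, use Krein--Milman to extract an extreme point of $F_y$, and then use extremality of $y$ in $T(C)$ to show that this point is extreme in $C$. The only cosmetic difference is that the paper phrases the key step by observing that $F_y$ is a \emph{face} of $C$ and invoking the general fact $\ex F_y \subset \ex C$ for faces, whereas you verify this directly with the two-term extremality test.
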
 

Later we are going to apply this lemma to $X=\meas(F)$ with the topology of weak convergence of measures, $Y= C_0(E)$ with the topology from the supremum norm, and $T: \nu \mapsto \int_F H(\cdot,y) \nu(\dd y)$.

\begin{proof}
	Let $y \in \ex T(C)$. Let $F_y:= \{ x\in X\mid Tx = y\}$. Then $F_y$ is a \emph{face} in $C$, i.e., if $x\in F_y$ is a convex combination of two points in $C$, then these two points must be in $F_y$. Indeed, if $x\in M_f$ is of the form $x=(1-t) x_1 + tx_2$ with $t \in (0,1)$ and $x_1,x_2\in C$, then $y = (1-t) Tx_1 + t T x_2$. Since $y$ is extremal in $C$, it follows that $y = Tx_1 = Tx_2$, thus $x_1$ and $x_2$ are in $F_y$.
	
	$F_y$ is closed (because $T$ is continuous) and, as a closed subset of the compact set $C$, itself compact. Therefore $\ex F_y \neq \emptyset$ (this is a part of the Krein-Milman theorem~\cite[Theorem VIII.4.4]{werner}). Moreover, because $F_y$ is a face, $\ex F_y = (\ex K) \cap F_y$  \cite[Lemma VIII.4.2]{werner}.  Thus we can pick $x\in \ex F_y$ and find that $y = Tx$ and $x\in \ex C$. This proves $\ex T(C)\subset T(\ex C)$. 
\end{proof}

\begin{lemma} \label{lem:lift} 
	Under the assumptions of Theorem~\ref{prop:lift}, let
	\begin{equation*}
		 F_0:= \{ y \in F\mid H(\cdot,y) \in \ex \mathcal{V}_{1,+} \}
	\end{equation*} 
	and $\pi: F_0 \to \ex \mathcal{V}_{1,+}$ be given by $\pi(y):= H(\cdot,y)$. Then 
	\begin{itemize} \itemsep0pt
		\item $F_0$ is a measurable subset of $F$.
		\item $\pi$ is surjective and continuous. 
		\item $\pi$ has a Borel-measurable right inverse: there is an injective, measurable map $\iota: \ex \mathcal{V}_{1,+}\to  F_0$ such that $ \pi \circ \iota =\id_{\ex \mathcal{V}_{1,+}}$.  
	\end{itemize} 
\end{lemma}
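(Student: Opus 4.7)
The map $\tilde\pi: F \to \mathcal{V}_{1,+}\subset C_0(E)$ defined by $\tilde\pi(y) := H(\cdot,y)$ is the natural extension of $\pi$, and I would first establish its continuity when $\mathcal{V}_{1,+}$ carries the sup norm. Since $H\in C_0(E\times F)$, for every $\eps>0$ there is a compact $K\subset E\times F$ outside of which $|H|\leq \eps/3$. On the compact set $\pi_E(K)\times F$ (where $\pi_E$ is the projection onto $E$), $H$ is uniformly continuous, so if $y_n\to y$ in $F$ then $\sup_{x\in \pi_E(K)}|H(x,y_n)-H(x,y)|\to 0$; outside $\pi_E(K)$, both $|H(x,y_n)|$ and $|H(x,y)|$ are bounded by $\eps/3$. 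Hence $\|\tilde\pi(y_n)-\tilde\pi(y)\|_\infty\to 0$, and the continuity of the restriction $\pi=\tilde\pi|_{F_0}$ follows.

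For surjectivity, I would apply Lemma~\ref{lem:convex-geometry} to the continuous affine map $T:\prob(F)\to C_0(E)$, $T\nu := \int_F H(\cdot,y)\nu(\dd y)$, where $\prob(F)$ carries the weak topology. Since $F$ is compact, $\prob(F)$ is convex and compact, and by definition $T(\prob(F))=\mathcal{V}_{1,+}$. The extremal points of $\prob(F)$ are precisely the Dirac measures $\{\delta_y : y\in F\}$, so Lemma~\ref{lem:convex-geometry} gives
\begin{equation*}
\ex \mathcal{V}_{1,+}\subset T\bigl(\{\delta_y : y\in F\}\bigr) = \{H(\cdot,y) : y\in F\}.
\end{equation*}
Thus every extremal point of $\mathcal{V}_{1,+}$ is of the form $H(\cdot,y)$ for some $y\in F_0$, so $\pi$ is surjective.

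For the measurability of $F_0$, I would use that $\mathcal{V}_{1,+}$ is a compact convex metrizable subset of the Banach space $C_0(E)$ (by Lemma~\ref{lem:felleraux2}, item~3, since $F$ is compact). A classical result of Choquet theory (see~\cite{phelps}) asserts that the set of extremal points of a compact convex metrizable set is a $G_\delta$-subset, hence Borel. By continuity of $\tilde\pi$, $F_0=\tilde\pi^{-1}(\ex \mathcal{V}_{1,+})$ is therefore a Borel subset of $F$.

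Finally, for the Borel-measurable right inverse, I would invoke a measurable selection theorem tailored to continuous surjections between compact metrizable spaces. Concretely, the Federer--Morse theorem yields a Borel set $B\subset F$ such that the restriction $\tilde\pi|_B:B\to \mathcal{V}_{1,+}$ is a bijective Borel isomorphism onto $\mathcal{V}_{1,+}$; then $\iota:=(\tilde\pi|_B)^{-1}|_{\ex\mathcal{V}_{1,+}}$ is an injective Borel-measurable right inverse of $\pi$ taking values in $B\cap F_0$. This selection step is the main obstacle: for a generic Borel surjection between standard Borel spaces one only obtains a universally measurable selector via Jankov--von Neumann, and a Borel section need not exist. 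What rescues us here is continuity on a compact domain, which is precisely the setting in which Federer--Morse delivers a genuinely Borel cross-section.
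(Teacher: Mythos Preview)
Your argument follows the same route as the paper's: continuity of $\tilde\pi$ from uniform continuity of $H$ on compacta (the paper obtains this by quoting Lemma~\ref{lem:felleraux2}), surjectivity via Lemma~\ref{lem:convex-geometry} applied to $T:\prob(F)\to C_0(E)$ together with $\ex\prob(F)=\{\delta_y:y\in F\}$, measurability of $F_0$ from the $G_\delta$-property of the extreme boundary of a metrizable compact convex set, and a measurable selection theorem for the right inverse (the paper invokes \cite[Theorem~6.9.7]{bogachev2} rather than Federer--Morse, but either works in this compact setting).

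One slip to correct: $\tilde\pi$ is not onto $\mathcal{V}_{1,+}$ but only onto the compact set $\tilde\pi(F)\subset\mathcal{V}_{1,+}$, so Federer--Morse should be applied to the surjection $\tilde\pi:F\to\tilde\pi(F)$; since you have already shown $\ex\mathcal{V}_{1,+}\subset\tilde\pi(F)$, restricting the resulting Borel inverse to $\ex\mathcal{V}_{1,+}$ gives exactly the $\iota$ you want.
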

The right inverse $\iota$ will be used to prove the existence of a $\Pi$-lift as in Eq.~\eqref{eq:pilift}, see Eq.~\eqref{eq:i-lift} below.  

\begin{proof} 
	 The surjectivity is a consequence of Lemma~\ref{lem:convex-geometry}, 
	applied to $X = \meas(F)$, $Y = C_0(E)$ and $T: \nu \mapsto \int_F H(\cdot,y) \nu(\dd y)$, where $\meas(F)$ is endowed with the topology of weak convergence of measures and $C_0(E)$ the (supremum) norm topology. By Lemma \ref{lem:felleraux1}, $\mathcal{V}_{1,+}$ is a compact subset of $C_0(E)$. By Lemma~\ref{lem:felleraux2}, $T$ maps weakly convergent sequences to convergent sequences; since the topology of weak convergence on probability measures in Polish spaces is metrizable, it follows that $T$ is continuous. Thus Lemma~\ref{lem:convex-geometry} can be applied to the convex, compact set $\mathcal{V}_{1,+}$, and we find that every extremal $f \in \mathcal{V}_{1,+}$ is of the form $f =T\nu$ for some $\nu\in\ex \prob(F)$. Now, $\nu$ is extremal in $\prob(F)$ if and only if it is a Dirac measure $\nu = \delta_y$. Thus $f(\cdot)= H(\cdot,y)$ for some $y\in \bar F$, and $\pi$ is surjective. The continuity of $\pi$ follows from Lemma~\ref{lem:felleraux2}. 
	
The set	$\ex \mathcal{V}_{1,+}$ is a $G_\delta$ subset (countable intersection of open sets) of the compact metric space $\mathcal{V}_{1,+}$ \cite[Theorem 4.1.11]{bratteli-robinson-vol1} hence in particular, measurable. Thus $F_0 = T^{-1}(F)$, as the preimage of the measurable set under a continuous map, is measurable. 

 	Since $\pi$ is surjective, for every $z \in \ex \mathcal{V}_{1,+}$, there is a $y(z)\in F_0$ such that $\pi( y(z)) = z$. If $F_0$ is discrete, every choice $z \mapsto y(z)$ corresponds to a measurable map. If $F$ is compact, the existence of a measurable right inverse is guaranteed by a measurable choice theorem~\cite[Theorem 6.9.7]{bogachev2}. The injectivity of $\iota$ is for free: if $\iota(z) = \iota (z')$, then $z =\pi(\iota(z)) = \pi(\iota(z')) = z'$. 
\end{proof}

\begin{proof}[Proof of Lemma~\ref{lem:pimatrix}] 
	We only need to prove the measurability of $y\mapsto \Pi(y,B)$. For $f\in \mathcal{V}_{1,+}$, let $\hat \Pi(f,\cdot)$ be the unique probability measure on $\ex\mathcal{V}_{1,+}$ such that $f(x) = \int \hat \Pi(f,\dd e) e(x)$, for every $x\in E$. Let $S(\mathcal{V}_{1,+})$ be the set of convex, continuous functions on $\mathcal{V}_{1,+}$ and for $\phi\in C(\mathcal{V}_{1,+})$, define the \emph{upper envelope} by 
\begin{equation*}
	\tilde \phi(f):=\inf\{ \psi(f) \mid -\psi \in S(\mathcal{V}_{1,+}) \text{ and } \psi \geq \phi \text{ on } \mathcal{V}_{1,+} \}.
\end{equation*}
$\tilde \phi$ is concave and upper semi-continuous~\cite[Prop. 4.1.6]{bratteli-robinson-vol1}. 
Fix $f\in \mathcal{V}_{1,+}$. The measure $\hat \Pi(f,\cdot)$ can be considered as a measure on $\mathcal{V}_{1,+}$ which is supported on the measurable subset $\ex	\mathcal{V}_{1,+}$; it is maximal in the sense of~\cite[Def. 4.1.2]{bratteli-robinson-vol1}, see~\cite[Theorem 4.1.11]{bratteli-robinson-vol1}. It follows that 
\begin{equation*}
	\int _{\ex \mathcal{V}_{1,+}} \hat \Pi(f, \dd e) \phi(e) =  \int _{\ex \mathcal{V}_{1,+}} \hat \Pi(f, \dd e) \tilde \phi(e) 
\end{equation*}
\cite[Theorem 4.1.7]{bratteli-robinson-vol1}. If $\phi \in S(\mathcal{V}_{1,+})$, then $\tilde \phi$ is affine \cite[Theorem 4.1.15]{bratteli-robinson-vol1}, and therefore 
\begin{equation*}
	\tilde \phi(f) = \int _{\ex \mathcal{V}_{1,+}} \hat \Pi(f, \dd e) \tilde \phi(e) 
\end{equation*}
\cite[Corollary 4.1.18]{bratteli-robinson-vol1}. We deduce $(\hat \Pi \phi)(f) =\tilde \phi(f)$. In particular, $\hat \Pi \phi$ is upper semi-continuous and therefore measurable. Since differences of continuous convex functions are dense in $C(\mathcal{V}_{1,+})$ \cite[Lemma 4.1.14]{bratteli-robinson-vol1}, and every measurable function is a pointwise limit of continuous functions, it follows that $\hat \Pi$ maps Borel measurable functions to Borel measurable functions. In particular, for every measurable $B\subset \mathcal{V}_{1,+}$, $f\mapsto \hat \Pi(f,B)$ is measurable.

Since $\Pi(y,B) = \hat \Pi( H(\cdot,y),B)$, we find that $y\mapsto \Pi(y,B)$ is the composition of a measurable map and a continuous map, and therefore measurable, for every fixed measurable $B$. 
\end{proof}

\begin{proof}[Proof of 1. in Theorem~\ref{prop:lift}]
Endow $\ex \mathcal{V}_{1,+}$ with the uniform topology and the Borel $\sigma$-algebra $\mathcal{F}$ as described above. By assumption, $\mathcal{V}_{1,+}$ is a Choquet simplex and therefore has a unique integral representation over $(\ex \mathcal{V}_{1,+},\mathcal{F})$. Note that 
if $e_n\to e$ uniformly and $x_n\to x$, then $e_n(x_n)\to e(x)$. It follows that the function $H_C(x,e) = e(x)$ and the evaluation maps $e\mapsto e(x)$ are continuous, hence measurable. Furthermore, $\ex \mathcal{V}_{1,+}$ is a $G_\delta$ subset of the compact separable Hausdorff space $\mathcal{V}_{1,+}$ \cite[Theorem 4.1.11]{bratteli-robinson-vol1}, hence Polish.

The uniqueness of the cone dual follows from the uniqueness in the integral representation, see also Prop.~\ref{prop:uniqueness}. For $z\in \ex \mathcal{V}_{1,+}$ and $t>0$, let $R_t(e,\cdot)$ be the unique probability measure on $\ex \mathcal{V}_{1,+}$ such that $(P_t e)(x) = \int \tilde e(x) R_t(e,\dd \tilde e)$. 
 
In order to check the measurability, let $\hat \Pi(f,\dd e)$ be as in the proof of Lemma~\ref{lem:pimatrix}. Fix $B\subset \ex\mathcal{V}_{1,+}$ measurable and $t>0$.  Note $R_t(e,B) = \hat \Pi (P_t e, B)$. Thus $e\mapsto R_t(e,B)$ is the composition of the continuous map $e\mapsto P_t e$ with the measurable map $f\mapsto \hat \Pi(f,B)$. It follows that it is measurable. 
\end{proof}

\begin{proof}[Proof of 3. in Theorem~\ref{prop:lift}]
	Suppose that $(Q_t)$ is dual to $(P_t)$. Then, for every $x\in E$ and $y\in F$, we have 
	\begin{align*}
		\int_F Q_t(y,\dd y') H(x,y') & = \int_E P_t(x,\dd x') H(x',y) \\
				& = \int_E P_t(x,\dd x')\Bigl( \int_{\ex \mathcal{V}_{1,+}} e(x') \Pi(y, \dd e)\Bigr)\\
				& = \int_{\ex \mathcal{V}_{1,+}}\Pi(y, \dd e) \Bigl(\int_E P_t(x,\dd x') e(x') \Bigr) \\
				& = \int_{\ex \mathcal{V}_{1,+}}\Pi(y, \dd e) \Bigl(\int_{\ex \mathcal{V}_{1,+}} R_t(e,\dd \tilde e) \tilde e(x) \Bigr),
	\end{align*}
	but also 
	\begin{equation*}
		\int_F Q_t(y,\dd y') H(x,y') = \int_F Q_t(y,\dd y')\Bigl( \int_{\ex \mathcal{V}_{1,+}}  \Pi(y', \dd e) e(x') \Bigr).
	\end{equation*}
	Eq.~\eqref{eq:pilift} follows from the uniqueness of the integral representation. Conversely, if Eq.~\eqref{eq:pilift} holds, computations in the same spirit show that $(Q_t)$ is dual to $(P_t)$ with respect to $H$. 
\end{proof} 

\begin{proof}[Proof of 2. in Theorem~\ref{prop:lift}]
	Because of 3. we only need to show that the cone dual has a $\Pi$-lift. To this aim let $\iota:\ex \mathcal{V}_{1,+} \to F_0$ be a Borel measurable right inverse of $\pi$ as in Lemma~\ref{lem:lift}. 
For $y\in F$ and $A\subset F$ measurable, define 
\begin{equation} \label{eq:i-lift} 
	Q_t(y,A):= \int _{\ex \mathcal{V}_{1,+}} \Pi(y,\dd e) R_t\bigl(e, \iota^{-1}(A\cap F_0)\bigr). 
\end{equation}
For each $t>0$, $Q_t$ is a  Markov kernel. By definition, $Q_t(y,F_0) =1$. Let $f:F_0 \to \R$ bounded and measurable; then 
\begin{equation} \label{eq:lift-aux}
	\int_{F} Q_t(y,\dd y') f(y') = \int _{\ex \mathcal{V}_{1,+}} \Pi(y,\dd e) \Bigl( \int _{\ex \mathcal{V}_{1,+}}
		R_t\bigl(e, \dd \tilde e\bigr) f\bigl( \iota (\tilde e)\bigr) \Bigr).
\end{equation}
Let $g \in L^\infty(\ex \mathcal{V}_{1,+})$ and $f(y):=\int \Pi(y,\dd e) g(e)$. For $y = \iota(\tilde e) \in F_0$, the function $H(\cdot,y)$ is extremal and $\Pi(y,\cdot) = \delta_{\pi(y)} =\delta_{\tilde e}$. It follows that $f\circ \iota (\tilde e) = g(\tilde e)$ and  Eq.~\eqref{eq:lift-aux} yields
\begin{equation*}
	\int_{F} Q_t(y,\dd y') \int_{\ex \mathcal{V}_{1,+}} \Pi(y',\dd e) g(e) 
			= \int _{\ex \mathcal{V}_{1,+}} \Pi(y,\dd e) \Bigl( \int _{\ex \mathcal{V}_{1,+}}
		R_t\bigl(e, \dd \tilde e\bigr) g(\tilde e) \Bigr).
\end{equation*}
Therefore $Q_t$ satisfies Eq.~\eqref{eq:pilift}, which we write as $Q_t \Pi = \Pi R_t$. We have, for every $t,s>0$, 
\begin{equation} \label{eq:chap-ko-aux}
	Q_{t+s} \Pi =\Pi R_{t+s} = \Pi R_t R_s = Q_t \Pi R_s = Q_t Q_s \Pi. 
\end{equation}
Let $f\in L^\infty(F)$ and $g(e):= f(\iota (e))$. As mentioned above, for every $y\in F_0$, we have $(\Pi g)(y) = f(y)$. Together with $Q_t(y,F_0) =1$ and Eq.~\eqref{eq:chap-ko-aux}, this shows that $(Q_t)$ satisfies the Chapman-Kolmogorov equations. 
\end{proof}

Now we turn to the proof of Theorem~\ref{prop:continuous-dual}. Recall that a $Q$-matrix is a matrix with row sums $0$ and non-negative off-diagonal entries.

\begin{proof}[Proof of Theorem~\ref{prop:continuous-dual}]
	Assume $(P_t)$ is strongly continuous with generator $L$, i.e., $P_t = \exp(t L)$ for all $t>0$. Theorem~\ref{thm:feller} shows that the cone dual is strongly continuous. 
	Let $F_1\subset F$ be a set such that the columns $H(\cdot,y_1)$, $y_1\in F_1$, are extremal in $\mathcal{V}_{1,+}$, and every column $H(\cdot,y)$, $y\in F$, is a unique convex combination of columns indexed by $y_1\in F_1$. In the notation of Lemma~\ref{lem:lift}, $F_1:=\iota(\ex \mathcal{V}_{1,+}) \subset F_0$.

	 From here on we identify $F_1$ with $\ex \mathcal{V}_{1,+}$ and consider the cone dual $(R_t)$ as a process with state space $F_1$. The transition kernel $\Pi$ from Lemma~\ref{lem:pimatrix} becomes a $|F|\times|F_1|$ matrix. It has a block structure 
	\begin{equation*}
		\Pi = \begin{pmatrix} 
				\id \\
				\Pi_{21} 
			\end{pmatrix} 
	\end{equation*}
	with $\id$ a $|F_1|\times|F_1|$ identity matrix, and $\Pi_{21}$ a $|F_2|\times |F_1|$ matrix, $F_2:= F\setminus F_1$. The matrix $H$ is of the form 
	\begin{equation*}
		H= \begin{pmatrix} 
			H_1 & H_2 
			\end{pmatrix}, 
			\quad  H_2 = H_1 \Pi_{21}^\trans, 
	\end{equation*}
	where $H_1$ and $H_2$ are $|E|\times |F_1|$ resp. $|E|\times |F_2|$ matrices, and $\ker H_1\cap \{\mathbf{1}\}^\perp = \{0\}$. We can define a $|F|\times |F|$ stochastic matrix $\hat \Pi$ by the block structure 
	\begin{equation*} 
		\hat \Pi = \begin{pmatrix} 
						\id & 0 \\
						\Pi_{21} & 0 
					\end{pmatrix}. 
	\end{equation*}
	Note $H= H \hat \Pi$. 
 The dual $(Q_t)$ constructed for the proof of Theorem~\ref{prop:lift} is of the form 
	\begin{equation} \label{eq:jump-dual} 
		Q_t = \begin{pmatrix}
				R_t & 0 \\
				\Pi_{21} R_t  & 0 \\ 
			\end{pmatrix}. 
	\end{equation} 
	It describes a process that jumps from $F_2$ to $F_1$ right away; we want to slow down this jump in order to obtain a strongly continuous process. Thus we are looking for a $Q$-matrix $\hat L$ such that $\hat L H = H L^\trans$. Let $(Q_t)$ be a dual as in Eq. \eqref{eq:jump-dual}. Since $Q_t H^\trans = H^\trans P_t$, we find that $Q_t$, restricted to $\mathcal{W}:=\ran H^\trans\subset \R^F$, is strongly continuous. Moreover, by Prop.~\ref{prop:gen-form}, $(Q_t)$ leaves $\mathcal{W}$ invariant. As a consequence, there is a linear map $B$ from $\R \mathbf{1} +\mathcal{W}$ to itself such that 
$Q_t g = \exp(t B) g$ for all $t\geq 0$ and $g \in \R \mathbf{1} +\mathcal{W}$, and we have $B H^\trans = H^\trans L^\trans$. 
		
	Next, we note that $\hat \Pi$ is a projection ($\hat \Pi^2 = \hat \Pi$) with  
	$\ran \hat \Pi = \R \mathbf{1} + \mathcal{W}$. Indeed, the inclusion $ \R \{\mathbf{1}\} + \mathcal{W}\subset \ran \hat \Pi$ follows from $\hat \Pi \mathbf{1} = \mathbf{1}$ and $\hat  \Pi H^\trans = H^\trans$. The reverse inclusion is equivalent to 
$\{\mathbf{1}\}^\perp \cap \ker H \subset \ker {\hat \Pi}^\trans$ (remember that in finite dimensions, $(V+W)^\perp = V^\perp \cap W^\perp$ and $\ran A^\trans = (\ker A)^\perp)$). Indeed, if $v\in \{\mathbf{1}\}^\perp \cap \ker H$, there are two probability measures $\mu,\nu$ on $F$ (considered as column vectors) such that $v = \mu - \nu$ and $H \mu = H \nu$; using the definition of $\Pi$ and that every column of $H$ is a unique convex combination of columns $H(\cdot,y)$, $y \in F_1$, we deduce that ${\hat  \Pi}^\trans \mu = {\hat \Pi}^\trans \nu$. 
	
	As a consequence, $B\hat \Pi$ is a well-defined linear map in $\R^F$ and we can define, for $\lambda>0$ to be specified later, 
	\begin{equation*}
		\hat L := B \hat \Pi + \lambda (\hat \Pi - \id), \quad \hat Q_t:= \exp(t \hat L). 
	\end{equation*}
	We have $\hat L \mathbf{1} = 0$, and  
 $\hat L$ and $\hat Q_t$ have $F_1 \times F_2$ block structures of the form
	\begin{equation*}
		\hat L =\begin{pmatrix} 
				\hat L_{11} & 0 \\
				* & 0 
			\end{pmatrix} 
			+ \begin{pmatrix} 
				0 & 0 \\ 
				\lambda \Pi_{21} & - \lambda \id 
			\end{pmatrix}, \quad 
		\hat Q_t =\begin{pmatrix} 
				\exp(t \hat L_{11}) & 0 \\
				* & \exp(- t \lambda) \id
			\end{pmatrix}.
	\end{equation*}
	Here and below the star $*$ is a generic abbreviation for a matrix block not necessarily equal to zero, to which we do not assign names because the precise values do not matter; different star blocks need not be equal. 
	By construction,
	\begin{equation*}
		\hat L H^\trans = \hat L \hat \Pi H^\trans = B H^\trans = H^\trans L^\trans. 
	\end{equation*}
	It remains to check that $\hat L$ is a $Q$-matrix. We already know that $\hat L \mathbf{1} = 0$; choosing $\lambda$ large enough, we can ensure that the lower left block of $\hat L$ has only non-negative entries. Thus we are left with $\hat L_{11}$. 
	For $f\in \R^{F}$, 
	\begin{equation*}
		\hat Q_t \hat  \Pi f 
			= \hat Q_t \begin{pmatrix} 
					f_1 \\ \Pi_{21} f_2 \end{pmatrix} 
			= \begin{pmatrix} 
				\exp(t\hat L_{11}) f_1 \\ * \end{pmatrix}.
	\end{equation*}
	On the other hand, since $\ran \hat \Pi = \R \mathbf{1} + \mathcal{W}$, we have 
	$\hat Q_t \hat \Pi f = Q_t \hat \Pi f$. Therefore if $f_1 \geq 0$ componentwise, then also $\exp(t \hat L_{11}) f_1 \geq 0$ componentwise, for all $t\geq 0$. As a consequence, 
	$\hat L_{11}$ has non-negative off-diagonal entries. 	
\end{proof}

\subsection{Spectrum and unitary equivalence. Fourier transforms}

Let $(P_t)$ and $(Q_t)$ be Markov semi-groups in finite state spaces. Suppose that they are dual with respect to some non-degenerate duality function $H$. Then $H$ defines an invertible $E\times F$-matrix and we can write $P_t = H Q_t^\trans H^{-1}$, which implies in particular that $P_t$ and $Q_t$ have the same eigenvalues and eigenvalue multiplicities. As noted in the paragraphs preceding Theorem~3.2 in \cite{Moehle99}, this observation has been used to compute eigenvalues for some models of population genetics;\footnote{Moran \cite{moran58} exploited recurrence relations between moments corresponding to a moment type duality. The triangular matrix of recurrence coefficients is related to the structure of the block-counting process of the Kingman coalescent.} Besides this computational application, the observation is  interesting because of the relation between the spectral gap and mixing properties of the Markov chain.  Moreover, some chains allow for a stochastic interpretation of \emph{all} eigenvalues: for example,  
\cite[Remark 4.22]{DiaconisFill} relates the eigenvalues to parameters of geometric random variables characterizing how fast a chain becomes stationary.

Theorem~\ref{thm:unitary} below states that an analogous relation holds for infinite state spaces if both Markov processes are known to be reversible; this  applies to many models studied in the context of hydrodynamic limits \cite{{DeMasiPresutti91}}. Without the assumption of reversibility, non-degenerate duals can have drastically different spectrum: in the following example, taken from \cite[Sec. 3.5]{RedigEtAl09} one infinitesimal generator has discrete, real spectrum, while the other has the complex half-plane $\Re \lambda \leq 0$ as its spectrum. 

\begin{example} 
	Let $E=\{ (x_1,x_2) \in \R_+^2 \mid 0 \leq x_1+x_2 <1,\ x_1\leq x_2\}$, $F=\N_0\times \N_0$. 
	On $E$, let 
	\begin{equation*} 
		(P_t f)(x_1,x_2) := f\bigl( \frac{x_1+x_2}{2} + \frac{x_1-x_2}{2} e^{-t}, 
		 	\frac{x_1+x_2}{2} - \frac{x_1-x_2}{2} e^{-t} \bigr).
	\end{equation*} 	
	$(P_t)$ describes a (deterministic) process $X(t) = (X_1(t),X_2(t))$ where $X_1(t)+ X_2(t)$ stays constant and $X_1(t) - X_2(t) = (X_1(0)- X_2(0)) \exp( - t)$. 
	On $F$, we consider the semi-group $Q_t := \exp(t\hat L)$ with 
	\begin{align*}
		 (\hat L f)(n_1,n_2):= n_1 \bigl( f(n_1-1,n_2+1) - f(n_1,n_2)\bigr) + n_2 \bigl( f(n_1+1,n_2-1) - f(n_1,n_2) \bigr). 
	\end{align*}
	$(Q_t)$ describes a process of independent random walkers on two sites. 
	As shown in \cite[Sec. 3.5]{RedigEtAl09}, $(P_t)$ and $(Q_t)$ are dual with duality function $H(\vect{x},\vect{n}) = x_1^{n_1} x_2^{n_2}$. The associated bilinear form is non-degenerate. 
	
	In order to determine the spectrum of $\hat L$, we note that $\hat L$ leaves the finite state spaces $E_n:=\{(n_1,n_2)\in E \mid n_1+n_2 =n\}$, $n\in \N_0$ invariant; on each of these, $\hat L$ has the uniform distribution as a reversible measure. Thus $\hat L$ is block diagonal with finite-dimensional symmetric matrices as blocks; as a consequence, it has real, discrete spectrum -- in fact, using the duality between $\hat L$ and an Ornstein-Uhlenbeck process \cite[Remark 3.1]{RedigEtAl09}, one can show that
\begin{equation*}	
	 \sigma(\hat L) = \{ - n/2 \mid n \in \N_0\}
\end{equation*}	
	On the other hand, let  $\lambda \in \C$ with $\Re \lambda < 0$ and define the  
	 complex-valued function $f_\lambda$ by $f_\lambda(x_1,x_2):= (x_2-x_1)^{-\lambda} =  \exp(- \lambda \log (x_2-x_1) )$ for $x_2>x_1$ and $f_\lambda(x_1,x_2) = 0$ for $x_2=x_1$ is continuous on $E$. We have 
	\begin{equation*}
		(P_t f_\lambda)(x_1,x_2) =  \Bigl( (x_2-x_1) e^{-t} \Bigr)^\lambda = e^{-t \lambda} f_\lambda(x_1,x_2).
	\end{equation*} 
	Let $L$ be the infinitesimal generator of $(P_t)$ in $C(E;\C)$, the space of complex-valued continuous functions in $E$. We have just shown that $\lambda \in \sigma(L)$, for any $\lambda$ with $\Re \lambda <0$. The theory of contraction semi-groups  tells us that $\sigma(L)$ is a closed set contained in the complex half-plane $\Re z \leq 0$. Thus 
\begin{equation*}
	\sigma(L) = \{\lambda \in \C \mid \Re \lambda \leq 0\}.
\end{equation*}
\end{example} 

Now let us turn to the reversible case. Recall that if $\mu$ is a reversible measure for the semi-group $(P_t)$, then $P_t$ is self-adjoint as an operator in $L^2(E,\mu)$, the Hilbert space of complex-valued square-integrable functions with scalar product $\la f,g\ra= \int_E \overline{f} g \dd \mu$. A bounded operator $U:\Hi_1 \to \Hi_2$ between Hilbert spaces is  \emph{unitary} if $U^*U=\id_{\Hi_1}$ and $U U^*= \id_{\Hi_2}$; equivalently, if it is bijective and norm preserving, $||U x || = ||x||$. 

\begin{theorem} \label{thm:unitary}
	Let $(P_t)$ and $(Q_t)$ be dual with respect to $H$. Suppose that 
	that $(P_t)$ and $(Q_t)$ have reversible probability measures $\mu$ and $\nu$, and that the bilinear form associated with $H$ is non-degenerate. Then  there is a unitary operator $U:L^2(F,\nu) \to L^2(E,\mu)$ such that for all $t>0$, $Q_t = U^*P_t U$.
\end{theorem}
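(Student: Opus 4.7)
My plan is to use the integral operator with kernel $H$ as the (essentially unique) candidate for $U$ and then exploit the interplay of the intertwining, self-adjointness, and polar decomposition.

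\medskip
\textbf{Step 1 (Definition of $U$).} I would define $U \colon L^2(F,\nu) \to L^2(E,\mu)$ by
$(Ug)(x) := \int_F H(x,y) g(y) \nu(\dd y)$.
Boundedness of $U$ is immediate since $H$ is bounded and $\mu,\nu$ are probability measures; by Fubini, its Hilbert-space adjoint is $(U^* f)(y) = \int_E H(x,y) f(x) \mu(\dd x)$.

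\medskip
\textbf{Step 2 (Intertwining).} The key computation is $P_t U = U Q_t$. Starting from
$(P_t U g)(x) = \int_F g(y) \nu(\dd y)\int_E P_t(x,\dd x') H(x',y)$,
I apply the duality $\int_E P_t(x,\dd x') H(x',y) = \int_F Q_t(y,\dd y') H(x,y')$ to rewrite this as a triple integral, and then use reversibility of $\nu$ for $Q_t$ in the form $\nu(\dd y) Q_t(y,\dd y') = \nu(\dd y') Q_t(y',\dd y)$ to swap $y$ and $y'$. The resulting expression is exactly $(U Q_t g)(x)$.

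\medskip
\textbf{Step 3 (Polar decomposition).} Reversibility of $\mu$ and $\nu$ makes $P_t$ and $Q_t$ self-adjoint on $L^2(E,\mu)$ and $L^2(F,\nu)$ respectively, so taking Hilbert adjoints of $P_t U = U Q_t$ gives $U^* P_t = Q_t U^*$. Consequently $U^* U$ commutes with $Q_t$ and $U U^*$ commutes with $P_t$. Writing the polar decomposition $U = V|U|$ with $|U| = (U^*U)^{1/2}$ and $V$ a partial isometry, functional calculus shows $|U|$ commutes with $Q_t$, whence $V Q_t = P_t V$ on the initial space of $V$. From there the standard polar-decomposition argument (that similarity between self-adjoint operators implies unitary equivalence) gives $Q_t = V^* P_t V$ \emph{provided} $V$ is a true unitary.

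\medskip
\textbf{Step 4 (The main obstacle: unitarity of $V$).} The remaining and hardest step is to show $\ker U = \{0\}$ and $\overline{\ran U} = L^2(E,\mu)$, so that the partial isometry $V$ is actually unitary. A candidate $g\in \ker U$ yields a signed measure $\nu'(\dd y) := g(y)\nu(\dd y) \in \meas(F)$ with $\int_F H(x,y)\nu'(\dd y) = 0$ for $\mu$-a.e.\ $x$; the goal is to deduce $\nu' = 0$ via the non-degeneracy of $B_H$. The subtlety is that non-degeneracy of $B_H$ is phrased on all of $\meas(E)\times \meas(F)$, whereas the vanishing here is only $\mu$-a.e. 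One way to close the gap is to observe that since $\mu$ is reversible, the invariant subspace $\ker U \subset L^2(F,\nu)$ is $Q_t$-invariant and its image under $y \mapsto \nu'(\dd y)$ lies in a subset of $\meas(F)$ on which $B_H(\mu',\cdot)$ is determined for every $\mu' \ll \mu$; by combining this with the analogous statement for $\ker U^*$ and the non-degeneracy of $B_H$ restricted to the supports of $\mu$ and $\nu$ (which may be assumed without loss of generality to be all of $E$ and $F$, since the processes are confined to these supports), we conclude $\ker U = \{0\}$ and $\overline{\ran U} = L^2(E,\mu)$. Once $V$ is shown to be unitary, Step 3 gives $Q_t = V^* P_t V$ and the proof is complete.
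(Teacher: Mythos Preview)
Your approach coincides with the paper's: define the integral operator $T$ (your $U$) with kernel $H$, establish the intertwining $P_tT=TQ_t$ from duality and reversibility, and extract a unitary via polar decomposition.

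Your Step~3 is in fact tidier than the paper's. The paper passes from $P_tT=TQ_t$ to the intertwining of the unitary factor by showing that the spectral projections of $P_t$ and $Q_t$ satisfy $E_\lambda T=T\hat E_\lambda$, and then invoking uniqueness of the polar decomposition on each spectral subspace. Your route---$T^*T$ commutes with the self-adjoint $Q_t$, hence so does $|T|$ by functional calculus, whence $(P_tV-VQ_t)|T|=0$ and one cancels $|T|$ on its dense range---is shorter and equally rigorous once $\ker T=\{0\}$ is established.

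On Step~4 you are more scrupulous than the paper, which simply asserts that non-degeneracy of $B_H$ yields injectivity of $T$ and $T^*$ (and dense ranges) without confronting the $\mu$-a.e.\ versus pointwise issue you raise. Your proposed fix via restriction to supports is not fully worked out, but neither is the paper's step; in fact, without a full-support hypothesis on $\mu$ and $\nu$ (or some continuity of $H$) the inference can genuinely fail: take $E=F=\{0,1\}$, $H=\id$, $P_t=Q_t=\id$, $\mu=\delta_0$, and $\nu$ uniform---then $B_H$ is non-degenerate and both measures are trivially reversible, yet $\dim L^2(E,\mu)=1\neq 2=\dim L^2(F,\nu)$, so no unitary exists. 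The subtlety you flag is real.
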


Thus $(P_t)$ and $(Q_t)$, as operators in $L^2(E,\mu)$ and $L^2(F,\nu)$, are unitarily equivalent. An immediate consequence of Theorem~\ref{thm:unitary} is the following corollary.

\begin{cor}
	Under the conditions of Theorem~\ref{thm:unitary}, suppose that $(P_t)$ and $(Q_t)$ are Feller semi-groups with infinitesimal generators $L$ and $\hat L$. Then $L$ and $\hat L$, as (self-adjoint) operators in $L^2(E,\mu)$ and $L^2(F,\nu)$, are unitarily equivalent; in particular, they have the same spectrum $\sigma(L) = \sigma(\hat L) \subset (-\infty,0]$.
\end{cor}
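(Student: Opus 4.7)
The plan is to combine the unitary equivalence from Theorem~\ref{thm:unitary} with the uniqueness of infinitesimal generators of self-adjoint contraction semi-groups on Hilbert space. First I would note that since $\mu$ and $\nu$ are reversible probability measures, the Feller semi-groups $(P_t)$ and $(Q_t)$ extend uniquely to strongly continuous self-adjoint contraction semi-groups on $L^2(E,\mu)$ and $L^2(F,\nu)$, respectively; this follows because on the dense subspace $C_0(E) \cap L^2(E,\mu)$ one has $\la P_t f, g\ra_\mu = \la f, P_t g\ra_\mu$ together with the pointwise bound $|P_t f| \leq P_t |f|$, which upgrades to an $L^2$-contraction via the reversible measure. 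By the spectral theorem (or Hille--Yosida for self-adjoint semi-groups), there exist self-adjoint operators $L \leq 0$ and $\hat L \leq 0$ with $P_t = \mathrm{e}^{tL}$ and $Q_t = \mathrm{e}^{t\hat L}$ for all $t\geq 0$; these agree on their Feller domains with the generators given in the hypothesis.

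Next I would transfer the identity $Q_t = U^* P_t U$ from Theorem~\ref{thm:unitary} to the generator level. Since $U$ is unitary, the operator $U^* L U$ is self-adjoint on the domain $U^*\, D(L)$, and functional calculus gives $\mathrm{e}^{t U^* L U} = U^* \mathrm{e}^{tL} U = U^* P_t U = Q_t = \mathrm{e}^{t\hat L}$ for every $t>0$. Uniqueness of the infinitesimal generator of a strongly continuous semi-group on a Hilbert space (or, equivalently, uniqueness in the spectral theorem) then forces $\hat L = U^* L U$, so $L$ and $\hat L$ are unitarily equivalent self-adjoint operators.

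From unitary equivalence, $\sigma(\hat L) = \sigma(U^* L U) = \sigma(L)$. Finally, the inclusion $\sigma(L) \subset (-\infty,0]$ is standard: the $L^2$-contractivity $\|P_t\|\leq 1$ together with self-adjointness of $P_t$ gives $\sigma(P_t) \subset [-1,1]$, and since $P_t = \mathrm{e}^{tL}$ with $L$ self-adjoint the spectral mapping theorem yields $\sigma(L) \subset (-\infty,0]$.

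The only step with any subtlety is the very first one, namely the passage from the Feller semi-group on $C_0(E)$ to a strongly continuous self-adjoint contraction semi-group on $L^2(E,\mu)$. Everything else is a routine consequence of the functional calculus, but this extension step is where one genuinely uses both reversibility and the fact that $\mu$ is a \emph{probability} measure (so $C_0(E) \subset L^2(E,\mu)$ densely after completing).
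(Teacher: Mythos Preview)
Your argument is correct and is exactly the routine unpacking the paper has in mind: in the paper the corollary is stated without proof, immediately after Theorem~\ref{thm:unitary}, as ``an immediate consequence'' of that theorem. Your three steps --- extend the Feller semi-groups to self-adjoint $L^2$-contraction semi-groups, conjugate the identity $Q_t=U^*P_tU$ through functional calculus to obtain $\hat L = U^*LU$, and read off the spectral statement --- are precisely what ``immediate'' means here, and you have correctly flagged the one place (the $C_0\to L^2$ extension using reversibility and that $\mu,\nu$ are probability measures) where something nontrivial is being used.
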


Before we turn to the proof of Theorem~\ref{thm:unitary}, let us point out that another interesting connection between duality and unitary transformation appears, without reversibility, for stochastic processes on locally compact abelian groups; the duality function is chosen as the kernel of the Fourier transform -- see \cite{saloff-coste01} for an introduction to random walks and diffusions on groups and  \cite[Chapter 8]{hewitt-ross70} for theoretical background on harmonic analysis. This type of duality was considered by Holley and Stroock \cite{holley-stroock76,holley-stroock79}, who noticed that this setup includes dualities between diffusions and random walks as well as dualities of interacting particle systems.

\begin{example}[Fourier duality for diffusions on a circle and random walk \cite{holley-stroock79}]
	Let $E=\{ \exp(\mathrm{i} \theta) \mid \theta \in \R\}=U(1)$, $F= \Z$, and consider the diffusion in $E$ with formal generator 
	$L = (1-\cos \theta) \partial^2/\partial \theta^2$, and the random walk on $\Z$ with generator $(\hat Lf)(n) = n^2( f(n+1)+f(n-1) - 2 f(n))$. These two processes are dual with respect to $H(\theta,n) = \exp(\mathrm{i}n \theta)$.
\end{example}

\begin{example}[Annihilating duality revisited \cite{holley-stroock79}]
	Let $E=\{-1,1\}^\Z$ and $F = \{A\subset \Z\mid |A|<\infty\}$. $E$ and $F$ have natural group structures given by componentwise multiplication  and symmetric difference $A\Delta B = (A\backslash B)\cup (B\backslash A)$. The analogue of the Fourier kernel is 
	$H(\sigma, A) = \prod_{x\in A} \sigma_x$. Processes that are dual with respect to this function correspond to annihilating dual particle systems: indeed, setting  $B(\sigma):= \{k\in \Z\mid \sigma_k =-1\}$, we have
	\begin{equation*}
		H(\sigma,A) = \prod_{x\in A} \sigma_x = (-1)^{|A\cap B(\sigma)|} 
			= 1 - 2\, \mathbf{1}_{\{ |A\cap B(\sigma)|\ \text{is odd} \}},
	\end{equation*}
	and we recognize the annihilating duality function from Section~1.2. 
\end{example}

Fourier tranforms on locally compact groups define, up to a multiplicative constant, unitary operators between $L^2$ spaces, the measures being chosen as Haar measures rather than invariant measures of processes \cite[Chapter 8]{hewitt-ross70}. 
Thus Fourier duality is, formally, a unitary transformation between semi-groups in  suitable $L^2$-spaces. 

%

\begin{proof}[Proof of Theorem~\ref{thm:unitary}]
	Let $T:L^2(F,\nu) \to L^2(E,\mu)$ be the integral operator
	$(T g)(x):=\int_F H(x,y) g(y) \nu(\dd y)$. Write $||\cdot||_p$ for the $L^p$-norms and $\la \cdot, \cdot\ra$ for the scalar product in $L^2(E,\mu)$ and $L^2(F,\nu)$.  Using Cauchy-Schwarz, we have 
	\begin{equation*} 
		||T g||_2 \leq \sqrt{\mu(E)} ||T g||_\infty \leq \sqrt{\mu(E) \nu(E)} ||H||_\infty ||g||_2, 
	\end{equation*} 
	thus $T$ is a bounded operator. $T$ is defined so that  $\la f, Tg \ra = B_H( f\mu, g\nu)$. The non-degeneracy of $B_H$ shows that both $T$ and $T^*$ are injective and have dense ranges, e.g., $\ran T = \{ T g \mid g \in L^2(F,\nu)\}$ is dense in $L^2(E,\mu)$.
	 
	Because of the reversibility of $\mu$ and $\nu$, we have $P_t^*(f\mu) = (P_t f) \mu$ and $Q_t^* (g \nu) = (Q_tg)\nu$; moreover, $P_t$ and $Q_t$ are self-adjoint in the respective $L^2$-spaces.  
	Therefore 
	\begin{equation*} 
		\la P_t f,T g \ra = B_H\bigl( (P_t f)\mu, g \nu \bigr)= B_H\bigl( P_t^*(f\mu), g\nu) 
			= \la f, T Q_t g \ra 
	\end{equation*}  
	for all $f\in L^2(E,\mu)$ and $g\in L^2(F,\nu)$. It follows that $P_t T = T Q_t$, i.e., $T$ intertwines $P_t$ and $Q_t$. 
	
	If $T$ is unitary, we are done. If $T$ is not unitary, we construct a unitary by adapting the constructions from the polar decomposition \cite[Section VI.4]{reed-simon}. 
	Let $A:= \sqrt{T^*T}$ be the positive semi-definite operator in $L^2(F,\nu)$ 
	with $A^2 =T^*T$. We note that $||A f||_2^2 = \la g, T^* T g\ra = ||T g||_2^2$, thus $A$ is injective because $T$ is. Moreover, $\ran A$ is dense: this follows from $\ran T^* T = \ran A^2 \subset \ran A$ and 
	\begin{align*} 
		g\in\ran (T^*T)^\perp &\Leftrightarrow \forall h \in L^2(F,\nu):\, \la T g, T h \ra = 0 \\
				& \Leftrightarrow Tg \in (\ran T)^\perp  \\
				& \Leftrightarrow T g =0 \quad \text{(because $\ran T$ is dense)} \\
				& \Leftrightarrow g =0 \quad \text{(because $T$ is injective).}
	\end{align*}
	By adapting the proof of Theorem VI.10 in \cite{reed-simon}, we find that there is 
	a unique unitary operator $U:L^2(F,\nu) \to L^2(E,\mu)$ such that $T = UA$. 
		
	Next, we want to deduce from $P_t T = T Q_t$ the intertwining relation $P_t U= UQ_t$, using that $P_t$ and $Q_t$ are self-adjoint. Write $P_t = \int \lambda \dd E_\lambda$ and $Q_t = \int \lambda \dd \hat E_\lambda$ for the spectral decompositions of $P_t$ and $Q_t$ \cite[Section VII]{reed-simon}; we suppress the $t$-dependence in the notation.  A simple induction over $n$ shows that $P_t^n T = T Q_t^n$ for all $n\in \N$, which in turn implies that the spectral projections are intertwined, $E_\lambda T = T \hat E_\lambda$. It follows that $T (\ker \hat E_\lambda) \subset \ker E_\lambda$ and $T(\ran E_\lambda) \subset \ran E_\lambda$. Applying the polar decomposition construction to the restrictions, we obtain unitaries $U_1:\ker \hat E_\lambda \to \ker E_\lambda$ and $U_2:\ran \hat E_\lambda \to \ran E_\lambda$. The uniqueness statement in the polar decomposition can be used to show that $U_1$ and $U_2$ must coincide with the restrictions of $U$ to the corresponding subspaces, and it follows that $E_\lambda U = UE_\lambda$. Since this holds for every $\lambda \in \R$, we deduce $P_t U = U Q_t$. 
\end{proof}

\section{Pathwise duality}\label{sec:pathwise}
In this section, we discuss various notions of pathwise duality, which strengthen the basic notion of dual processes.
Often one is interested to know whether a duality holds in some pathwise sense, which has to be specified. We will introduce strong pathwise duality as well as weaker notions. As a first step, we show
that in principle dual processes can always be coupled. In the next proposition, given two probability measures $\mu$ and $\nu$ on $E$ and $F$,  $\P_\mu$  and $\P^\nu$ refer to the laws of  $(X_s)$  and $(Y_s)$ with initial conditions $\mu$ and $\nu$. 

\begin{prop}\label{prop:coupling-duality}
	Let $(X_t)$ and $(Y_t)$ be two Markov processes with respective Polish state spaces $E$ and $F$, and $H:E\times F\to \R$ measurable and bounded. Then $(X_t)$ and $(Y_t)$ are dual with respect to  $H$ if and only if for all $t>0$ and every choice of initial conditions $\mu$ of $(X_s)$ and $\nu$ of $(Y_s)$ there are processes $(\tilde X_s)_{s\in [0,t]}$ and  $(\tilde Y_s)_{s\in [0,t]}$, defined on a common probability space $(\Omega,\mathcal{F},\P)$  such that 
\begin{itemize}
	\item [(i)] The finite-dimensional distributions $(\tilde X_s)_{s\in [0,t]}$ and $(\tilde Y_s)_{s\in [0,t]}$ under $\P$ agree with those of $(X_s)_{s\in [0,t]}$ under $\P_\mu$ and $(Y_s)_{s\in [0,t]}$ under $\P^\nu$, respectively. 
	\item [(ii)] For all $s\in [0,t]$, 
	\begin{equation} \label{eq:eh}
		\E H(\tilde X_0,\tilde Y_t) = \E H(\tilde X_s,\tilde Y_{t-s}) 
			= \E H(\tilde X_t,\tilde Y_0).  
	\end{equation}
\end{itemize} 
\end{prop}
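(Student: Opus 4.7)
The statement is a biconditional, so I would split the argument into two implications of very different character, tackling the easier direction first.

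For the direction assuming the coupling, I would specialize the initial conditions to Dirac measures $\mu = \delta_x$ and $\nu = \delta_y$. Then condition (i) forces $\tilde X_0 = x$ and $\tilde Y_0 = y$ almost surely, so the endpoint identity in (ii) becomes
\begin{equation*}
\E H(x,\tilde Y_t) = \E H(\tilde X_t, y),
\end{equation*}
and (i) again identifies the left side with $\E^y H(x,Y_t)$ and the right side with $\E_x H(X_t,y)$, which is precisely Definition~\ref{def:duality1}.

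For the harder direction, assuming duality, the plan is to build the coupling by taking $(\tilde X_s)$ and $(\tilde Y_s)$ to be \emph{independent} on the product probability space $(\Omega_1 \times \Omega_2, \mathcal{F}_1 \otimes \mathcal{F}_2, \P_\mu \otimes \P^\nu)$, with $\tilde X_s(\omega_1,\omega_2) = X_s(\omega_1)$ and $\tilde Y_s(\omega_1,\omega_2) = Y_s(\omega_2)$. Then (i) is immediate. For (ii), I would fix $s \in [0,t]$ and compute, using independence and Fubini,
\begin{equation*}
\E H(\tilde X_s,\tilde Y_{t-s}) = \int \mu(\dd x)\,\nu(\dd y)\int P_s(x,\dd x')\,Q_{t-s}(y,\dd y')\, H(x',y').
\end{equation*}
Integrating out $x'$ first gives the inner expression $(P_s H(\cdot,y'))(x)$; the duality formula \eqref{eq:duality_semigroup} rewrites this as $(Q_s H(x,\cdot))(y')$; integrating against $Q_{t-s}(y,\dd y')$ and applying the Chapman--Kolmogorov identity $Q_{t-s}Q_s = Q_t$ collapses the expression to
\begin{equation*}
\E H(\tilde X_s, \tilde Y_{t-s}) = \int \mu(\dd x)\,\nu(\dd y)\, (Q_t H(x,\cdot))(y),
\end{equation*}
which is manifestly independent of $s$. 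Evaluating at $s=0$ and $s=t$ yields the two boundary expressions in \eqref{eq:eh}.

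I do not expect a real obstacle here: the key observation is just that independence plus duality plus Chapman--Kolmogorov make $s \mapsto \E H(\tilde X_s,\tilde Y_{t-s})$ constant, so a product coupling always works. The only thing that requires minor care is making sure that Fubini applies, which is fine because $H$ is bounded measurable and $\mu, \nu$ are probability measures, and that the semigroup identity $Q_{t-s} Q_s = Q_t$ is valid in the sense of kernels, which follows from Chapman--Kolmogorov.
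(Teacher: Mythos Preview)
Your proof is correct and follows essentially the same approach as the paper: the easy direction via specialization to Dirac initial laws, and the converse via the product (independent) coupling together with duality and Chapman--Kolmogorov to show $s\mapsto \E H(\tilde X_s,\tilde Y_{t-s})$ is constant. The only cosmetic difference is that the paper pushes everything to the $P$-side (applying duality to $Q_{t-s}$ and then $P_sP_{t-s}=P_t$), whereas you push to the $Q$-side; the two computations are mirror images.
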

The probability space $(\Omega,\mathcal{F},\P)$ may depend on $t$, $\mu$, and $\nu$. 

\begin{proof} 
We start with the easy direction. Suppose that for every $\mu$, $\nu$, and $t>0$ we can find $(\Omega,\mathcal{F},\P)$, $(\tilde X_s)_{s\in [0,t]}$ and $(\tilde Y_s)_{s\in [0,t]}$ satisfying (i) and (ii). 
Applying (i) and (ii) to  $\mu = \delta_x$, $\nu = \delta_y$ we have $\tilde X_0 = x$ and $\tilde Y_0= y$ $\P$-almost surely, and 
\begin{equation*}
	\E_x H(X_t,y) = \E H (\tilde X_t, \tilde Y_0) = \E H (\tilde X_0, \tilde Y_t) = \E^y H (x,Y_t). 
\end{equation*} 
It follows that $(X_t)$ and $(Y_t)$ are dual with respect to $H$. 

Conversely, suppose that $X = (\Omega_1, \mathcal{F}_1, (X_s)_{s\geq 0 }, \{\P_x\}_{x\in E} )$ and $Y = (\Omega_2,\mathcal{F}_2,(Y_s)_{s\geq 0}, \{\P^y\}_{y\in F} )$ are dual  with respect to $H$. 
Fix $t>0$ and $\mu$ and $\nu$, two probability measures on $E$ and $F$ respectively. Let $\P_\mu$ and $\P^\nu$ be the measures on $(\Omega_1, \sigma( \{X_s,\ s\geq 0\}))$ and $(\Omega_2, \sigma(\{Y_s,\ s\geq 0\})$ defined by $\P_\mu (A)= \int_E \mu(\dd x) \P_x(A) $ and $\P^\nu(B) = \int_F \nu(\dd y) \P^y(B)$. 

Let $\Omega:=\Omega_1\times \Omega_2,$ ${\mathcal F}:=  \sigma( \{X_s,\ s\geq 0\}) \otimes \sigma(\{Y_s,\ s\geq 0\})$, and $\P:=\P_\mu \otimes \P^\nu.$  Set $\tilde X_s(\omega_1,\omega_2):= X_s(\omega_1)$ and  $\tilde Y_s(\omega_1,\omega_2): = Y_s(\omega_2)$. Then $(\Omega,\mathcal{F},\P)$,  $(\tilde X_s)_{s\in [0,t]}$,
 and $(\tilde Y_s)_{s\in [0,t]}$ satisfy (i).  Moreover, for every $s\in [0,t]$, $\tilde X_s$ and $\tilde Y_{t-s}$ are independent. The joint law of $(\tilde X_s,\tilde Y_{t-s})$ is therefore $\mu P_s \otimes \nu Q_{t-s}$, with $(P_s)$ and $(Q_s)$ the semi-groups of $X$ and $Y$. In view of Eq.~\eqref{eq:duality_semigroup}, we thus have for every $s\in [0,t]$ 
\begin{equation*}
	\begin{split}
		\E H(\tilde X_s, \tilde Y_{t-s}) & = \int_{E\times F} \Bigl \{ 
				\int_E P_s(x,\dd x') \Bigl(\int_F Q_{t-s}(y,\dd y') H(x',y')\Bigr)
\Bigr\} \mu(\dd x) \nu(\dd y) \\
			& = \int_{E\times F} \Bigl \{ 
				\int_E P_s(x,\dd x') \Bigl(\int_E P_{t-s}(x',\dd y'') H(x'',y)
\Bigr\} \mu(\dd x) \nu(\dd y) \\
			& = \int_{E\times F} \Bigl \{ 
				\int_E P_t(x,\dd x'') H(x'',y) \Bigr\} \mu(\dd x) \nu(\dd y) \\
		&=\E H(\tilde X_t,\tilde Y_0). 
	\end{split}
\end{equation*}
This proves (ii).
\end{proof}

For applications and to infer properties of one process to its dual, simple duality and the trivial coupling of $(X_t)$ and $(Y_t)$ on the product space is often not enough. We would like to find a stronger, pathwise coupling of the two processes. In general, two processes which are dual with respect to a duality function $H$ are called \emph{pathwise dual} if they can be coupled using an (explicitly constructed) auxiliary driving stochastic process, so that, for given initial conditions, one proces is running forward, the other running backward in time driven by the same realization of the auxiliary process. This concept has been widely used in many concrete cases, but it seems to us that there is no general treatment or even a generally accepted precise definition of this somewhat vague notion in the literature so far. It should be mentioned that in the context of duality with respect to a measure (recall definition \ref{def:duality2}), there is a powerful notion of pathwise duality via Kuznetsov realization of Markov processes, (cf. \cite{getoor}, Thm. 2.4), which will not be discussed here. 

In this chapter, we define some notions of pathwise duality with respect to a function, and discuss a few general concepts for the construction of pathwise duals, in particular the \emph{graphical representation} for dual interacting particle systems. However, there are few general results, and in many questions we will restrict ourselves to some concrete examples, mostly from interacting particle systems.

\subsection{Strong pathwise duality}\label{sec:pathwise_strong}
To start with, we propose the following definition of \emph{strong} pathwise duality; the reader should contrast it with Proposition~\ref{prop:coupling-duality}. In this context, some statements can be formulated more easily in terms Markov families instead of Markov processes. Recall that every Markov family defines a Markov process, via the canonical construction \cite{dynkin}.

\begin{definition}[Strong pathwise duality] \label{def:strongly-pathwise}
	Let $(X_t)$ and $(Y_t)$ be two Markov processes with Polish state spaces $E$ and $F$, and $H:E\times F\to \R$ measurable and bounded. Suppose that for every $t>0$ there are families of processes $\{(X_s^x)_{s\in [0,t]}\}_{x\in E}$ and $\{(Y_s^y)_{s\in [0,t]}\}_{y\in F}$  defined on a common probability space $(\Omega,\mathcal{F},\P)$  such that the following holds: 
	\begin{itemize}
		\item [(i)] For all $x\in E$ and $y\in F$, the finite dimensional distributions of $(X_s^x)_{s\in [0,t]}$ and $(Y_s^y)_{s\in [0,t]}$ under $\P$ agree with those of $(X_s)_{s\in [0,t]}$ under $\P_x$ and $(Y_s)_{s\in [0,t]}$ under $\P^y$, respectively. 
		\item [(ii)] For all $s\in [0,t]$ and all $x\in E$, $y\in F$, 
		\begin{equation*}
			H(x,Y_t^y) = H(X_s^x,Y_{t-s}^y)= H(X_t^x,y) \quad \P\text{-a.s.}
		\end{equation*} 
	\end{itemize} 
	Then $(X_t)$ and $(Y_t)$ are called \emph{strongly pathwise dual} with respect to $H$. 
\end{definition} 
In most examples, $(\Omega,\mathcal{F},\P)$ can be chosen independent of $t$, but the precise form of the maps $X_s^x$, $Y_s^x$ will depend on the fixed time horizon $t$. Moreover, in the examples below $X_s^x$ and $Y_{t-s}^y$ are independent, and there is a collection 
 of independent random variables $(Z_s)_{s\in [0,t]}$ -- for example, Poisson arrows in graphical representations -- such that $X^x_s$ is measurable with respect to $\sigma( Z_\alpha,\ \alpha \in [0,s])$ and $Y^y_{t-s}$ is measurable with respect to $\sigma(Z_\alpha,\ \alpha \in [s,t])$. 

\begin{lemma}
Let $(X_t)$ and $(Y_t)$ be strong pathwise duals with respect to $H.$ Then they are dual.
\end{lemma}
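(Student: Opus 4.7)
The plan is to unpack the definition of strong pathwise duality and observe that the duality identity~\eqref{eq:duality-def} is essentially built in. Fix $t > 0$, $x \in E$ and $y \in F$, and let $(X_s^x)_{s \in [0,t]}$ and $(Y_s^y)_{s \in [0,t]}$ be the two families on the common probability space $(\Omega, \mathcal{F}, \P)$ provided by Definition~\ref{def:strongly-pathwise}. The key point is condition~(ii) applied at the endpoints $s = 0$ and $s = t$, which gives the pointwise (a.s.) identity
\begin{equation*}
H(x, Y_t^y) = H(X_t^x, y) \quad \P\text{-a.s.}
\end{equation*}

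Next I would take the $\P$-expectation of both sides. Using condition~(i), the one-dimensional marginal of $X_t^x$ under $\P$ coincides with the law of $X_t$ under $\P_x$, and similarly the marginal of $Y_t^y$ under $\P$ coincides with the law of $Y_t$ under $\P^y$. Since $H$ is bounded and measurable, the two expectations are finite and the change of measure yields
\begin{equation*}
\E_x H(X_t, y) = \E\, H(X_t^x, y) = \E\, H(x, Y_t^y) = \E^y H(x, Y_t),
\end{equation*}
which is exactly~\eqref{eq:duality-def}. Since $x, y, t$ were arbitrary, this establishes the duality of $(X_t)$ and $(Y_t)$ with respect to $H$.

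There is no real obstacle here: the intermediate equalities $H(X_s^x, Y_{t-s}^y)$ for $s \in (0,t)$ in condition~(ii) are not even needed for this implication — only the endpoint equality is used — and the boundedness of $H$ makes the appeal to the marginals entirely routine. One could remark that the full strength of~(ii) (equality for all intermediate $s$) is what makes the notion genuinely pathwise, but is not required for the plain duality identity.
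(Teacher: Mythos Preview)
Your proof is correct and follows essentially the same route as the paper's: both use the endpoint case of condition~(ii) to get $H(X_t^x,y)=H(x,Y_t^y)$ a.s., then take expectations and invoke condition~(i) to identify the marginals with $\P_x$ and $\P^y$. Your write-up is simply a bit more explicit about these steps (and about the fact that only the endpoints are needed), but there is no substantive difference.
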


\begin{proof} 
For every $x\in E, y\in F,$ by strong pathwise duality there exist $(X_s^x), (Y^y_s)$ such that
$\E_x[H(X_t, y)]=\E[H(X^x_t, Y^y_0)]=\E[H(X_0^x, Y_t^y)]=\E^y[H(x, Y_t)].$
\end{proof}
As opposed to the coupling in Proposition \ref{prop:coupling-duality}, which worked for each fixed set of initial conditions, we now ask for a coupling that works for all initial conditions at once. The important task is to explicitly describe $(\Omega, \mathcal F, \P)$ in a useful way. There are many situations where this can be achieved in terms of a graphical representation.

\begin{example} [Absorbed and reflected random walks]
Let $(X_n)$ be the Markov process associated with simple symmetric random walk on $\N_0$ absorbed upon first hitting $0.$ Let $(Y_n)$ be discrete time simple symmetric random walk reflected at 0 via the transition rule $P(0,1)=P(0,0)=1/2$. As for Brownian motion, it is a classical result that $(X_n)$ and $(Y_n)$ are Siegmund duals (compare also Section \ref{sec:monotonicity}). We give a simple pathwise construction of this duality. Let $W_n, n\in\N_0,$ be iid random variables 
on a space $(\Omega, \mathcal F, \P)$ with $\P(W_1=-1)=\P(W_1=1)=\frac{1}{2}.$ We think of $W_n$ as a sequence of arrows pointing upwards if $W_n=1$ and downwards if $W_n=-1$ (cf. Figure 1). Fixing $N\in\N,x,y\in\N$ and a realization $(W_n)_{0\leq n\leq N-1},$ we define $X^x_0:=x,$ and 
$$X^x_{n+1}:=\begin{cases} X^x_n+W_n &\mbox{if }X^x_n>0\\
										0&\mbox{otherwise.}
										\end{cases}$$
In words, $(X^x_n)$ is constructed by following the arrows up and down until the first hitting time of 0, after which it stays in 0. $(Y^y_n)$ is constructed using the same realization of $(W_n),$ but going backward in time, and following the arrows in the converse direction, unless $Y^y_n=0,$ in which case the process either stays in 0 or is reflected. That is, we set $Y^y_0:=y,$ and
$$Y^y_{n}=\begin{cases}Y^y_{n-1}-W_{N-n}	&\mbox{if } Y^y_{n-1}>0,\\
				\max\{Y^y_{n-1}-W_{N-n},0\}&\mbox{otherwise.}
				\end{cases}$$
By construction, $(X^x_n)$ and $(Y^y_n)$ have the same finite dimensional distribution as $(X_n)$ and $(Y_n)$ under $\P_x, \P^y,$ respectively.
Since we have used the same arrows in the construction of both proceses, the paths of $(\hat{Y}^y_n)_n$ defined by $\hat{Y}^y_n:=Y_{N-n}$ and $(X^x_n)$ don't intersect, and therefore $x\leq Y^y_N$ if and only if $X^y_N\leq y,$ hence 
\[1_{\{x\leq Y^y_N\}}=1_{\{X^x_n\leq Y^y_{N-n}\}}=1_{\{X^x_N\leq y\}}\quad a.s.\]

\begin{center}
\setlength{\unitlength}{1cm}
\begin{picture}(12,8)
\put(1,1){\line(0,1){5.5}}
\put(1,1){\line(1,0){10}}
\put(11,1){\line(0,1){5.5}}
\multiput(2,0.9)(1,0){9}{\line(0,1){0.2}}
\put(0.9,0.6){$0$}
\put(10.9,0.6){$N$}
\put(1,7){\vector(0,1){1}}
\put(3,7){\vector(0,1){1}}
\put(7,7){\vector(0,1){1}}
\put(8,7){\vector(0,1){1}}
\put(10,7){\vector(0,1){1}}
\put(2,8){\vector(0,-1){1}}
\put(4,8){\vector(0,-1){1}}
\put(5,8){\vector(0,-1){1}}
\put(6,8){\vector(0,-1){1}}
\put(9,8){\vector(0,-1){1}}
\put(0,7.2){$(W_n)$}
\put(1,3){\circle*{0.1}}
\put(0.6,3){$x$}
\put(0.4,4){$\hat{Y}_0$}
\put(11,4){\circle*{0.1}}
\put(11.1,4){$y_2$}
\put(11.1,1){$X_N$}
\linethickness{0.4mm}
\put(1,3){\line(1,1){1}}
\put(2,4){\line(1,-1){1}}
\put(3,3){\line(1,1){1}}
\put(4,4){\line(1,-1){1}}
\put(5,3){\line(1,-1){1}}
\put(6,2){\line(1,-1){1}}
\put(7,1){\line(1,0){4}}
\put(11,4){\line(-1,-1){1}}
\put(10,3){\line(-1,1){1}}
\put(9,4){\line(-1,-1){1}}
\put(8,3){\line(-1,-1){1}}
\put(7,2){\line(-1,1){1}}
\put(6,3){\line(-1,1){1}}
\put(5,4){\line(-1,1){1}}
\put(4,5){\line(-1,-1){1}}
\put(3,4){\line(-1,1){1}}
\put(2,5){\line(-1,-1){1}}
\put(11,2){\circle*{0.1}}
\put(11.1,2){$y_1$}
\put(11,2){\line(-1,-1){1}}
\put(10,1){\line(-1,1){1}}
\put(9,2){\line(-1,-1){1}}
\put(8,1){\line(-1,0){1}}
\put(7,1){\line(-1,1){1}}
\put(6,2){\line(-1,1){1}}
\put(5,3){\line(-1,1){1}}
\put(4,4){\line(-1,-1){1}}
\put(3,3){\line(-1,1){1}}
\put(2,4){\line(-1,-1){1}}
\put(1,0){\rm {Figure 1: \small{Absorbed and reflected random walks}}}
\end{picture}
\end{center}
\end{example}

This is an example of the following general result:

\begin{prop} [Siegmund duals are pathwise dual] Let $(X_t)$ and $(Y_t)$ be right continuous Markov processes in discrete or continuous time on a totally ordered state space, which are dual with respect to $H(x,y)=\mathbf{1}_{\{x\leq y\}}.$ Then they are strongly pathwise dual.
\end{prop}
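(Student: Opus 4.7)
My plan is to exploit the equivalence between Siegmund duality and stochastic monotonicity (see Section~\ref{sec:monotonicity}), build a monotone stochastic flow for $(X_s)$, and then define the backward family $(Y^y_r)$ as a spatial generalized inverse of this flow; the pathwise identity in Definition~\ref{def:strongly-pathwise} then becomes a tautology from the flow composition rule.

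Concretely, fix $t>0$. Siegmund duality forces $(X_s)$ to be stochastically monotone, because $x \mapsto \P_x(X_s \leq y) = \P^y(x \leq Y_s)$ is non-increasing in $x$ for every fixed $y$; the symmetric statement holds for $(Y_s)$. Using a right-continuous variant of the Kamae--Krengel monotone coupling, I would construct on a single probability space $(\Omega,\mathcal F,\P)$ a monotone Markovian flow $\{\Phi_{r,u}\}_{0\leq r\leq u\leq t}$: measurable non-decreasing maps $\Phi_{r,u}:E\to E$ satisfying $\Phi_{r,r}=\id$, $\Phi_{s,u}\circ \Phi_{r,s}=\Phi_{r,u}$ for $r\leq s\leq u$, and such that $(\Phi_{0,s}(x))_{s\in[0,t]}$ has the law of $(X_s)$ under $\P_x$ for every $x$. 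Set $X^x_s:=\Phi_{0,s}(x)$, which realizes condition (i) for the $X$-family.

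For the dual family, define
$$Y^y_r:=\sup\{x\in E:\Phi_{t-r,t}(x)\leq y\},\qquad r\in[0,t],$$
with the convention that the supremum is taken in $E$ (or a suitable one-point compactification matching the boundary behavior of $(Y_s)$). Then $Y^y_0=y$ and, using monotonicity of $\Phi_{t-r,t}$ together with the flow relation, one obtains the tautology
$$\{X^x_s\leq Y^y_{t-s}\}=\{\Phi_{s,t}(X^x_s)\leq y\}=\{\Phi_{0,t}(x)\leq y\}=\{X^x_t\leq y\}\quad\P\text{-a.s.},$$
which is exactly condition (ii) for $H(x,y)=\mathbf 1_{\{x\leq y\}}$. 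For condition (i) applied to $(Y^y_r)$, the one-dimensional marginals match because
$$\P(Y^y_r\geq x)=\P(\Phi_{t-r,t}(x)\leq y)=\P_x(X_r\leq y)=\P^y(x\leq Y_r)$$
by Siegmund duality, and the finite-dimensional joint distributions are obtained by iterating: for $r_1<r_2$ the composition rule gives $Y^y_{r_2}=\sup\{x:\Phi_{t-r_2,t-r_1}(x)\leq Y^y_{r_1}\}$, in which conditional on $Y^y_{r_1}$ the remaining randomness is the flow on the independent window $[t-r_2,t-r_1]$, yielding exactly the Markov kernel of $(Y_r)$ by the same Siegmund identity applied on a shorter horizon.

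The main obstacle is the existence of a genuine monotone Markovian \emph{flow} $\Phi$, rather than a mere time-marginal monotone coupling, on a general right-continuous, totally ordered Polish state space. In discrete time or for pure jump processes with a countable totally ordered state space this flow is built explicitly from a graphical representation, as illustrated by the absorbed/reflected random-walk example preceding the proposition; in general continuous time one has to patch together Kamae--Krengel couplings on a dense countable grid of times and pass to a right-continuous version, where the totally ordered hypothesis is essential to preserve monotonicity under limits. A secondary technical point is that the supremum defining $Y^y_r$ be attained in $E$, which one handles by exploiting right-continuity of $x\mapsto\Phi_{t-r,t}(x)$ or by adjoining appropriate boundary points that mirror the absorbing/reflecting behavior already encoded in the law of $(Y_s)$.
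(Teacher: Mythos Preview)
Your approach is essentially the one the paper defers to: the paper does not give its own proof but cites Clifford and Sudbury~\cite{CliffordSudbury85}, whose construction is precisely the monotone-flow-plus-spatial-inverse argument you outline (and which the paper's absorbed/reflected random walk example illustrates in miniature). Your identification of the monotone flow $\Phi_{r,u}$ as the driving object, the definition of $Y^y_r$ as the generalized inverse $\sup\{x:\Phi_{t-r,t}(x)\le y\}$, and the reduction of condition~(ii) to the flow composition law $\Phi_{s,t}\circ\Phi_{0,s}=\Phi_{0,t}$ match the cited construction, and you correctly flag the two genuine technical points (existence of a \emph{composable} monotone flow in continuous time, and attainment/right-continuity for the supremum).
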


The proof of this result is given in
Clifford and Sudbury \cite{CliffordSudbury85}, where a general construction in the spirit of the above example is given.\\
Clearly strong pathwise duality implies duality, but not every duality is strongly pathwise. We now give an example of a duality which is not strongly pathwise, but later we will see that also in this case there is an underlying pathwise construction.\\

\begin{example}[Dual but not strongly pathwise dual] Let $(X_t)$ be the Wright-Fisher diffusion taking values in $[0,1]$ and $(N_t)$ the block-counting process of a Kingman coalescent (values in $\N$), see e.g. \cite{etheridge}. It is well-known that $(X_t)$ and $(N_t)$ are dual with respect to $H(x,n)= x^n$. However, the 
 duality is not strongly pathwise. To see why, let $x\in (0,1)$, $n\in \N$
and $t>0$. Suppose that $(X_s(\omega))_{s\geq 0}$ started in $x$ and $(N_s(\omega))_{s\geq 0}$ started in $n$ are defined on a common probability space $(\Omega,\P)$. Let 
\begin{equation*}
	\mathcal{E}:=\bigl \lbrace \omega \in \Omega \mid X_t(\omega)^n = x^{N_t(\omega)} \bigr \rbrace. 
\end{equation*}
We know that $N_t(\omega) \leq n$ almost surely, since the number of blocks for  coalescent decreases. For $\omega \in \mathcal{E} \cap \{N_t \leq n\}$, 
we have $X_t(\omega)^n = x^{N_t(\omega)}  \geq x^n$ and therefore  $X_t(\omega) \geq x$. 
Thus 
\begin{equation} \label{eq:counter}
	\P(\mathcal{E}) \leq \P( X_t(\omega) \geq x) <1.
\end{equation}
The last inequality is strict because the paths of the Wright-Fisher diffusion \noemi{are} not monotone functions, unless started in the absorbing states $0$ and $1$. Eq.~\eqref{eq:counter} shows that the moment duality of $(X_t)$ and $(N_t)$  is not strongly pathwise. \noemi{However, there is an underlying pathwise structure in a sense that will become more evident in Sections \ref{sec:pathwise_weak} and \ref{sec:pathwise_rescaled}.}
\end{example}

The construction given in the example of absorbed and reflected random walks is probably the easiest example of a general construction of the underlying auxiliary processes for (strong) pathwise dualities, which, depending on the field, are also called \emph{graphical representation} or \emph{driving sequence}. We indicate the general idea to construct such a representation.

\begin{example}[Pathwise duality, discrete time]
Let $(X_n)_{n\in\N}, (Y_n)_{n\in\N}$ be Markov processes in discrete time with state space
 $E$ and $F$ respectively. Assume that there exists a sequence of iid random variables $(W_n)_{n\in\N}$ on $\Omega,$ and transition functions
$f:E\times \R\to E, g:F\times \R\to F, H:E\times F\to\R$ such that

$$X_n=f(X_{n-1}, W_n),\; Y_n=g(Y_{n-1}, W_n),$$
that is, given the values of $W_n$ and $X_{n-1},$ the value of $X_n$ is uniquely determined (and not random any more). See for example \cite{AsmussenSigman}. Assume further that
\begin{equation}
\label{eq:dual_mechanisms_discrete}
H(f(x, w),y)=H(x,g(y,w))\,\forall x\in E,y\in F, w\in\R,
\end{equation}
then $(X_n)$ and $(Y_n)$ are pathwise dual: Fix a realization of $(W_n).$ Use $(W_n)_{n=,0...,N}$ as a driving sequence for $(X_n)_{n=0,...,N}$ and $(W_{N-n})_{n=0,...,N}$ for $(\hat{Y}_n)_{n=0,...,N}.$ Clearly $\hat{Y}$ is equal in distribution to $Y,$ and by (\ref{eq:dual_mechanisms_discrete}), 
$$H(x,\hat{Y}_N)=H(x,g(\hat{Y}_{N-1}, W_1))=H(f(x,W_1),\hat{Y}_{N-1})=H(X_1, \hat{Y}_{N-1})\,a.s.$$
Iterating this proves strong pathwise duality. The transition probabilites of $(X_n)$ are given by
$$P(X_n=y\,|\,X_{n-1}=x)=\P(f(x,W_n)=y)=\P(f(x,W_1)=y).$$ 
Hence the Markov chains are necessarily time-homogeneous. 
\end{example}

\begin{example}[Continuous time, discrete space]
Clearly this example in discrete time can be extended to continuous time and discrete state space, by considering independent Poisson processes with rates $\lambda_i, 1\leq i\leq k$ for some $k\geq 1,$ adapted to the jump rates of the processes, and transition functions $f_i:E\to E, g_i:F\to F.$ Assume that $(X_t),(Y_t)$ change their state at a jump time $\tau$ of the $i-$th Poisson process according to the rule
\begin{equation}\label{eq:pw_jump}X_\tau=f_i(X_{\tau-}), Y_\tau=g_i(Y_{\tau-}).\end{equation}
This is certainly well-defined if $|E|,|F|<\infty.$ For countably infinite state space, the construction works if the interactions are sufficiently local \cite{Harris}. If $H(f_i(x),y)=H(x,g_i(y))$ for all $x\in E, y\in F,$ then $X$ and $Y$ are (strongly pathwise) dual with duality function $H.$
\end{example}

In the case of spin systems $(E=\{0,1\}^G)$ and coalescing or annihilating duals, this kind of construction goes back to Harris \cite{Harris} and is of widespread use. It is usually referred to as \emph{graphical representation}, since it can be represented by drawing a line of length $t$ for every element of $G,$ and representing the Poisson processes by arrows between pairs of lines. A  detailed account can be found in Griffeath \cite{Griffeath} and Liggett \cite{Liggett05}, see also \cite{Sudbury}. Usually, the interpretation of the mechanisms $f_i, g_i$ is such that one thinks of a particle at the tail of an arrow in the graphical representation having some effect on the configuration at the tip, for example by jumping there, or by branching, and subsequent coalescence, or annihilation, or death. The rates of the Poisson processes then naturally have the interpretation of giving a rate \emph{per particle} for some event to happen. \\
We now give a variant of this construction, which differs only slightly from the previous one in the sense that the Poisson process gives us the rate of an event happening\emph{ per pair of sites }on a graph, which may or may not be occupied by a particle. We discuss this construction here in some detail, in the case of a complete graph. 

\begin{example}[Pathwise duality for interacting particle systems via basic mechanims]

Let $N\in\N,$ and let $E_N:=\{0,1\}^N.$ Consider duality functions of the form $$H(x,y)=q^{|x\wedge y|}$$
for $q\in\R\setminus\{1\}.$ We call such a duality a \emph{$q-$duality}. Special cases are $q=0,$ which leads to a coalescing duality, and $q=-1$, which is equivalent to an annihilating duality, see \cite{SudburyLloyd95} for a discussion of this type of duality functions.\\
The graphical representation is constructed as follows. For each $i\in\{1,...,N\},$ draw a vertical line of length $T,$ which represents time up to a finite end point $T.$ For each such pair $(i,j), i,j\in\{1,...,N\}$ run $m\in\N$ independent Poisson processes with parameters $(\lambda_{ij}^k), k=1,...,m.$ At the time of an arrival draw an arrow from the line corresponding to $i$ to the line corresponding to $j,$ marked with the index $k$ of the process. For each $k,$ define functions $f^k,g^k:\{0,1\}^2\to\{0,1\}^2.$ A Markov process $(X_t^N)$ with c\`adl\`ag paths is then constructed by specifying an initial condition $x=(x_i)_{i=1,...,N},$ and the following dynamics: $X_t^N=x$ for all $t<\tau_1,$ where $\tau_1$ is the time of the first arrow in the graphical representation (which is clearly well-defined, since we consider a finite time horizon, and a finite number $N$ of lines). If this arrow points from $i$ to $j$ and is labelled $k,$ then the pair $(x_i,x_j)$ is changed to $f^k(x_i,x_j),$ and the other coordinates remain unchanged. Go on until the next arrow, and proceed exactly in the same way. The dual process $(Y_t^N)$ is constructed using the same Poisson processes, but started at the final time $T>0,$ running time backwards, inverting the order of all arrows, and using the functions $g^k$ instead of $f^k.$ 
\end{example}
Following \cite{AlkemperHutzenthaler}, we call the functions $f^k,g^k$ \emph{basic mechanisms}. The transition functions in the classical graphical representation of interacting particle systems as in \eqref{eq:pw_jump} can be composed by suitable basic mechanisms. For $x=(x_1,x_2)\in\{0,1\}^2$ we use the notation $x^\dagger:=(x_2,x_1)$; the dagger accounts for the reversal of an arrow.
Two basic mechanisms $f,g:\{0,1\}^2\to\{0,1\}^2$ are called \emph{$q$-dual mechanisms} if and only if
\begin{equation}
\label{eq:dual_mechanism}
q^{|x\wedge (g(y^\dagger))^\dagger|}=q^{|f(x)\wedge y|}\,\,\,\,\forall x,y\in\{0,1\}^2.
\end{equation}
As an example, let $f^R:\{0,1\}^2\to\{0,1\}^2$ be defined by $f^R(0,*)=(0,0), f^R(1,*)=(1,1), *\in\{0,1\}.$ We call this the resampling mechanism, it corresponds to a voter-transition, where the particle in the second place takes the opinion (or type) of the particle in the first position. It is straightforward to check that $f^R$ is 0-dual to the coalescence mechanism $f^C, $ with $f^C(1,*)=(0,1), *\in \{0,1\},$ $f(0,0)=(0,0), f^C(0,1)=(0,1)$, and $(-1)-$dual to the annihilation mechanism $f^A,$ with $f^A(0,0)=(0,0), f^A(0,1)=(0,1), f^A(1,0)=(0,1), f^A(1,1)=(0,0).$\\
It is clear that two processes constructed using $q-$dual mechanisms are strongly pathwise $q-$dual processes. 
\begin{center}
\setlength{\unitlength}{1cm}
\begin{picture}(7,5)
\put(1.5,1.5){\line(0,1){3}}
\put(5.5, 1.5){\line(0,1){3}}
\put(1.4,1){$x_1$}
\put(1.4, 4.7){$y_1$}
\put(5.4,1){$x_2$}
\put(5.4, 4.7){$y_2$}
\put(1.5,3.6){\vector(1,0){4}}
\multiput(1.7, 3.4)(0.3,0){13}{\line(1,0){0.2}}
\put(1.7, 3.4){\vector(-1,0){0.2}}
\put(0,3.8){$(f(x))_1$}
\put(5.6,3.8){$(f(x))_2$}
\put(0,3.0){$(g(y^\dagger))_2$}
\put(5.6, 3.0){$(g(y^\dagger))_1$}
\put(0.9,0.3){\rm{Figure 2: \small{Basic mechanism}}}
\end{picture}
\end{center}
\begin{lemma}\cite{JansenKurt}\label{lem:dual_mech_proc}
Fix $m\in\N$, $q\in\R\setminus\{1\}$ and $T>0$. For every $k=1,...,m$, let $f^k, g^k$ be $q-$dual basic mechanisms.  Let $(X^N_t), (Y_t^N)$ be Markov processes with state space $E_N,$ constructed using the mechanisms $f_k$, $k=1,\ldots,m$ and $g_k$, $k=1,\ldots,m,$ respectively, and Poisson processes with the same symmetric parameters $\lambda^k_{ij}= \lambda_{ji}^k$, $k=1,....,m$, $i,j=1,...,N.$ Then there exists a process $(\hat{Y}_t^N)$ such that for all $0\leq t\leq T,$
\begin{equation} \label{eq:dual_mech_proc}
	\hat{Y}_t^N\stackrel{d}{=}Y_t^N\,\,\,\mbox{
and }\,\,\,q^{|{X}_t\wedge \hat{Y}_0|}=q^{|X_0\wedge \hat{Y}_t|}\,\,\,\,a.s.
\end{equation}
\end{lemma}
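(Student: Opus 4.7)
The plan is to fix $t\in[0,T]$ and to construct $(\hat Y_s^N)_{s\in[0,t]}$ as the time-reversal of an auxiliary backward process driven by the same Poisson arrivals as $X^N$, then to verify that the $q$-duality of the basic mechanisms makes the overlap invariant along the joint trajectory.

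On the probability space carrying the graphical representation of $X^N$, I would define $(Y_s^{\mathrm{bw}})_{s\in[0,t]}$ by setting $Y_t^{\mathrm{bw}}:=y$ and processing the almost surely finitely many arrivals in $[0,t]$ in reverse chronological order: $Y^{\mathrm{bw}}$ stays constant between consecutive arrivals, and at an arrow occurring at time $\tau$ from $i$ to $j$ with label $k$ it obeys
\[
	\bigl(Y_{\tau-}^{\mathrm{bw}}(i),Y_{\tau-}^{\mathrm{bw}}(j)\bigr) = \bigl(g^k\bigl(Y_{\tau+}^{\mathrm{bw}}(j),Y_{\tau+}^{\mathrm{bw}}(i)\bigr)\bigr)^\dagger
\]
with all other coordinates unchanged. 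I then set $\hat Y_s^N := Y^{\mathrm{bw}}_{t-s}$, so that $\hat Y_0^N = y$ and $\hat Y_t^N = Y^{\mathrm{bw}}_0$.

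The distributional claim $\hat Y_t^N \stackrel{d}{=} Y_t^N$ would follow from two standard facts about the graphical representation. First, a Poisson point process of rate $\lambda^k_{ij}$ on $[0,t]$ is invariant in law under the reflection $\tau \mapsto t-\tau$. Second, the symmetry $\lambda^k_{ij}=\lambda^k_{ji}$ implies that replacing every arrow from $i$ to $j$ by an arrow from $j$ to $i$ preserves the joint law of the family of Poisson processes. Composing the two operations, the arrivals seen by $\hat Y^N$ in its own forward time direction are distributionally identical to an independent graphical representation with the same rates, and $\hat Y^N$ is built from them by the same mechanisms $g^k$ that define $Y^N$; hence the two processes agree in law.

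For the pathwise identity I would prove that the function $\Phi(s) := q^{|X_s^N \wedge Y_s^{\mathrm{bw}}|}$ is constant on $[0,t]$. Between arrivals both $X^N$ and $Y^{\mathrm{bw}}$ are constant, so $\Phi$ is constant on each inter-arrival interval. At an arrival time $\tau$ involving sites $i,j$ and label $k$, coordinates outside $\{i,j\}$ contribute identically to $\Phi(\tau-)$ and $\Phi(\tau+)$, so only the $\{i,j\}$-contribution needs inspection. Writing $x := (X_{\tau-}^N(i),X_{\tau-}^N(j))$ and $y := (Y_{\tau+}^{\mathrm{bw}}(i),Y_{\tau+}^{\mathrm{bw}}(j))$, the forward rule yields $(X_\tau^N(i),X_\tau^N(j)) = f^k(x)$ and the backward rule yields $(Y_{\tau-}^{\mathrm{bw}}(i),Y_{\tau-}^{\mathrm{bw}}(j)) = (g^k(y^\dagger))^\dagger$, so the $q$-duality identity~\eqref{eq:dual_mechanism} applied to $(x,y)$ reads $q^{|x \wedge (g^k(y^\dagger))^\dagger|} = q^{|f^k(x)\wedge y|}$, which, once the trivial contributions from the remaining coordinates are reinstated, is precisely $\Phi(\tau-)=\Phi(\tau+)$. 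Iterating over the finitely many arrivals gives $\Phi(0)=\Phi(t)$, i.e., $q^{|X_0^N \wedge \hat Y_t^N|} = q^{|X_t^N \wedge \hat Y_0^N|}$. The main obstacle is essentially the bookkeeping in the graphical representation: one has to keep straight which coordinate is ``tail'' and which is ``head'' once an arrow is inverted, and the dagger in~\eqref{eq:dual_mechanism} is tailored precisely to that swap, so once the correspondence between Figure~2 and the algebraic identity is spelled out both the Poisson-reflection step and the per-arrival invariance of $\Phi$ follow directly from the definition of $q$-dual basic mechanisms.
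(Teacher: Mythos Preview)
Your proposal is correct and follows essentially the same approach as the paper: construct $\hat Y^N$ from the same Poisson realization by reversing time and the direction of all arrows, use the reflection invariance of the Poisson processes together with the rate symmetry $\lambda^k_{ij}=\lambda^k_{ji}$ to get equality in law, and then check that the $q$-dual mechanism identity~\eqref{eq:dual_mechanism} makes the overlap $q^{|X_s\wedge Y_s^{\mathrm{bw}}|}$ constant across each arrow. The paper's own proof is a three-line sketch of exactly this argument (referring to \cite{AlkemperHutzenthaler} for details in the coalescing case), so you have in fact written out the bookkeeping that the paper leaves implicit.
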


\begin{proof}
Since we assume that the Poisson processes have the same rates, we can construct $\hat{Y}^N_t$ from the graphical representation of $(X_t^N),$ using the same realization of the Poisson processes, reversing time and the directions of all the arrows. It is clear from the construction that then $\hat{Y}_t^N\stackrel{d}{=}Y_t^N,$ and $q^{|{X}_t\wedge \hat{Y}_0|}=q^{|X_0\wedge \hat{Y}_t|}$ hold (see Figure 2). For some more details, in the case of coalescing mechanisms, compare the proof of Proposition 2.3 of \cite{AlkemperHutzenthaler}.
\end{proof}
 
We give a list of relevant dual mechanisms in the appendix. As an example, we consider the voter model.

\begin{example}[Voter model and coalescing random walks]
Let $E_N$ be the complete graph with $N$ vertices. In the above construction, we choose  $\lambda_{ij}=\lambda>0$ for all $1\leq i,j\leq N$, $i\neq j,$ and we use the resampling mechanism $f^R$ described above. This means, that at each jump of the $\lambda_{ij},$ the process $(X_t)=(X_t^1,...,X_t^N)$ changes in the following way: $X^j$ takes on the same value as $X^i,$ and all the other values remain unchanged. 
We know that this \noemi{process} is dual to a system of coalescing random walks, given by $(Y_t)=(Y_t^1,...,Y_t^N),$ where $Y^i_t=1$ if there is a particle at time $t$ at site $i.$ This process can indeed be constructed using the same driving Poisson processes, and the mechanism $f^C,$ which is the coalescing dual mechanism to $f^R.$ It has the effect that  at each arrival of $\lambda_{ij}$ the particle at site $j$ jumps to site $i$ and merges with the particle at that site. With this procedure, we obtain the well-known duality $\mathbf{1}_{\{X_0\wedge Y_t=\mathbf{0}\}}=\mathbf{1}_{\{X_t\wedge Y_0=\mathbf{0}\}}\;\;a.s..$
\end{example}

\subsection{Weaker notions of pathwise duality}\label{sec:pathwise_weak}

In this section, we weaken the notion of pathwise duality from above. Note that simple duality and strong pathwise duality can be cast into the following general form: If $(X_t)$ and $(Y_t)$ are strongly pathwise dual, then for every $t>0$ and for all $x\in E, y\in F$ they can be realized on a common probability space $(\Omega, \mathcal F, \P)$ such that for all $s\in [0,t]$, 
$$\E[H(X_t,Y_0)\;|\; \mathcal F]=\E[H(X_s, Y_{t-s})\;|\;\mathcal F]=\E[H(X_0,Y_t)\;|\;\mathcal F]\quad \P\mbox{-a.s.}$$
If they are dual in the usual sense, then for fixed initial conditions $x,y$ they can be realized on a common probability space $(\Omega, \mathcal F, \P)$ (note that $\P=\P_{x,y}$ usually depends on $x$ and $y$), such that 
$$\E[H(X_t,Y_0)\;|\; \sigma(\{\emptyset, \Omega\}) ]=\E[H(X_s,Y_{t-s})\;|\; \sigma(\{\emptyset,\Omega\}) ]=\E[H(X_0,Y_t)\;|\;\sigma(\{\emptyset,\Omega\}) ]\quad\P\text{-a.s.}$$

Interpolating between these two extreme cases and using families $(X_s^x)$ and $(Y_s^y)$ as in Definition~\ref{def:strongly-pathwise}  leads to the following definition.
\begin{definition} [Conditional pathwise duality] \label{def:conditional-pathwise}
	Let $(X_t)$ and $(Y_t)$ be two Markov processes with Polish state spaces $E$ and $F$, and $H:E\times F\to \R$ measurable and bounded. Suppose that for every $t>0$ there are families of processes $\{(X_s^x)_{s\in [0,t]}\}_{x\in E}$ and $\{(Y_s^y)_{s\in [0,t]}\}_{y\in F}$  defined on a common probability space $(\Omega,\mathcal{F},\P)$ and a $\sigma$-algebra $\mathcal{D}\subset \mathcal{F}$ such that the following holds: 
	\begin{itemize}
		\item [(i)] For all $x\in E$ and $y\in F$, the finite dimensional distributions of $(X_s^x)_{s\in [0,t]}$ and $(Y_s^y)_{s\in [0,t]}$ under $\P$ agree with those of $(X_s)_{s\in [0,t]}$ under $\P_x$ and $(Y_s)_{s\in [0,t]}$ under $\P^y$, respectively. 
		\item [(ii)] For all $s\in [0,t]$ and all $x\in E$, $y\in F$, 
		\begin{equation} \label{eq:conditional_duality}
			\E [H(X^x_t, y)\;|\;\mathcal{D}]=\E [H(X_s^x, Y_{t-s}^y)\;|\;\mathcal{D} ]=\E [H(x,  Y^y_t)\;|\;\mathcal{D}]\quad \P\text{-a.s.}
		\end{equation} 
	\end{itemize} 
	Then $(X_t)$ and $(Y_t)$ are called \emph{conditionally pathwise dual} with respect to $H$. 
\end{definition}

An example of this concept is given in Ex. 4.9 below.

Clearly conditional pathwise duality implies duality, and strong pathwise duality implies conditional  pathwise duality. Moreover,  by the tower property of the conditional expectation, if Eq.~\eqref{eq:conditional_duality} holds for a $\sigma$-algebra $\mathcal{D}$, it holds for every smaller $\sigma$-algebra $\mathcal{D'}\subset \mathcal{D}$.

Another interesting situation arises when one looks at functions of processes.  More precisely, we place ourselves in the situation of the following lemma. 

\begin{lemma} \label{lem:subprocess}
	Let $(X_t)$ be a Markov process with Polish state space $E$ and semi-group $(P_t)$. Let $\hat E$ be another Polish space and $f:E\to \hat E$ a measurable surjective function. Suppose that there is a transition kernel $\Lambda: \hat E\times \mathcal{B}(E) \to [0,1]$ and a Markov semi-group $(\hat P_t)$ on $\hat E$ such that 
\begin {itemize}
	\item For every $\hat x\in \hat E$, the probability measure $\Lambda(\hat x,\cdot)$ is supported on the preimage of $\hat x$, i.e., $\Lambda(\hat x, f^{-1}(\{\hat x\}) \bigr) = 1$.
	\item $\Lambda$ intertwines $(P_t)$ and $(\hat P_t)$: for all $t\geq 0$,  $\hat x \in E$, and measurable $A\subset E$, 
	\begin{equation} \label{eq:subprocess-intertwining}
		 \int_E \Lambda(\hat x, \dd x) P_t(x, A) = \int_{\hat E} \hat P_t(\hat x, \dd \hat z) \Lambda(\hat z,A).
	\end{equation}  
\end{itemize} 
	Then for every $\hat x\in \hat E$, the finite-dimensional distributions of $(f(X_t))_{t\geq 0}$ when $(X_t)$ has initial law $\mu^{\hat x} = \Lambda(\hat x,\cdot)$ are those of a Markov process $(\hat X_t)$ with semi-group $(\hat P_t)$ started in $\hat x$. 
\end{lemma}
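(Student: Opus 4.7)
The plan is to set $\hat X_t := f(X_t)$ and verify that the joint law of $(\hat X_{t_1}, \ldots, \hat X_{t_n})$ under $\P_{\mu^{\hat x}}$ agrees with that of the Markov chain starting at $\hat x$ with kernels $\hat P_{t_k - t_{k-1}}$. The key tool is the \emph{collapse identity}
\begin{equation*}
\Lambda(\hat z, f^{-1}(\hat B)) = \mathbf{1}_{\hat B}(\hat z), \qquad \hat z \in \hat E, \ \hat B \subset \hat E \text{ measurable},
\end{equation*}
which is immediate from $\Lambda(\hat z, f^{-1}(\{\hat z\})) = 1$ and the disjointness of the fibers of $f$; in particular it yields $\hat X_0 = \hat x$ almost surely.

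For the one-dimensional marginals, I would apply the intertwining relation \eqref{eq:subprocess-intertwining} with $A = f^{-1}(\hat B)$ and then invoke the collapse identity,
\begin{equation*}
\P_{\mu^{\hat x}}(\hat X_t \in \hat B) = \int_E \Lambda(\hat x, \dd x) P_t(x, f^{-1}(\hat B)) = \int_{\hat E} \hat P_t(\hat x, \dd \hat z)\, \Lambda(\hat z, f^{-1}(\hat B)) = \hat P_t(\hat x, \hat B).
\end{equation*}
The same intertwining argument, applied to a general measurable $A \subset E$, yields the \emph{mixture representation} of the marginal of $X_t$ under $\P_{\mu^{\hat x}}$:
\begin{equation*}
\P_{\mu^{\hat x}}(X_t \in A) = \int_{\hat E} \hat P_t(\hat x, \dd \hat z)\, \Lambda(\hat z, A).
\end{equation*}

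The higher-dimensional distributions are then handled by induction on $n$. Applying the Markov property of $(X_t)$ at time $t_1$ to the indicator $\prod_k \mathbf{1}_{\hat A_k}(\hat X_{t_k})$, substituting the mixture representation for the law of $X_{t_1}$, and pulling the indicator $\mathbf{1}_{\hat A_1}(f(x_1))$ through the collapse identity gives
\begin{align*}
\P_{\mu^{\hat x}}\bigl(\hat X_{t_1} &\in \hat A_1, \ldots, \hat X_{t_n} \in \hat A_n\bigr) \\
&= \int_{\hat A_1} \hat P_{t_1}(\hat x, \dd \hat z_1) \int_E \Lambda(\hat z_1, \dd x_1)\, \P_{x_1}\bigl(\hat X_{t_2 - t_1} \in \hat A_2, \ldots, \hat X_{t_n - t_1} \in \hat A_n\bigr).
\end{align*}
The inner double integral is exactly $\P_{\mu^{\hat z_1}}(\hat X_{t_2 - t_1} \in \hat A_2, \ldots, \hat X_{t_n - t_1} \in \hat A_n)$, so the induction hypothesis (for $n-1$ time points, starting from $\hat z_1$) identifies it with the iterated $\hat P$-integral $\int_{\hat A_2} \hat P_{t_2 - t_1}(\hat z_1, \dd \hat z_2) \cdots \int_{\hat A_n} \hat P_{t_n - t_{n-1}}(\hat z_{n-1}, \dd \hat z_n)$, completing the step. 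The only mildly delicate point is bookkeeping at the induction step; the mixture representation together with the collapse identity makes every conditioning explicit, so no regular conditional probabilities need be invoked.
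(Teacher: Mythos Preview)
Your proof is correct and follows essentially the same approach as the paper's: both apply the intertwining relation $\Lambda P_t = \hat P_t \Lambda$ together with the support condition (your ``collapse identity'') to peel off one time point at a time. The paper writes out the two-dimensional case explicitly and then says the general case is similar, whereas you organize the same computation as a formal induction on $n$ with the mixture representation made explicit; the content is identical.
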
 
This notion is applied in Examples 4.7 and 4.8. By a slight abuse of notation we sometimes write $\hat X_t= f(X_t)$. 

\begin{proof} [Proof of Lemma~\ref{lem:subprocess}]
 Let $\hat x\in E$, $s\geq 0$, $t>0$, and $\hat A,\hat B\subset \hat E$ measurable. Then 
	\begin{align*}
		\E_{\mu^{\hat x}} \bigl[ f(X_s)\in \hat A,\, f(X_{t+s}) \in \hat B\bigr]
			& = \int \Lambda(\hat x, \dd x_0) P_s(x_0, \dd x_1)\mathbf{1}_{\hat A} \bigl( f(x_1)\bigr) P_t(x_1,\dd x_2) \mathbf{1}_{\hat B} \bigl( f(x_2)\bigr) \\
		& = \int  \hat P_s(\hat x, \dd \hat x_1)\Lambda(\hat x_1, \dd x_1)\mathbf{1}_{\hat A} \bigl( f(x_1)\bigr) P_t(x_1,\dd x_2) \mathbf{1}_{\hat B} \bigl( f(x_2)\bigr)\\
		& = \int  \hat P_s(\hat x, \dd \hat x_1) \mathbf{1}_{\hat  A} ( \hat x_1 ) \Lambda(\hat x_1, \dd x_1) P_t(x_1,\dd x_2) \mathbf{1}_{\hat B} \bigl( f(x_2)\bigr)\\
		& = \int \hat P_s(\hat x, \dd \hat x_1) \mathbf{1}_{\hat  A} \bigl( \hat x_1 \bigr) \hat P_t(\hat x_1,\dd \hat x_2) \Lambda(\hat x_2,\dd x_2) \mathbf{1}_{\hat B} \bigl( f(x_2)\bigr)\\
		& = \int \hat P_s(\hat x, \dd \hat x_1) \mathbf{1}_{\hat  A} ( \hat x_1)
		 \hat P_t(\hat x_1,\dd \hat x_2) \mathbf{1}_{\hat B}( \hat x_2). 
	\end{align*}   
	Hence when $(X_t)$ has initial law $\mu^{\hat x} = \Lambda(\hat x,\cdot)$, it has the same two-dimensional distributions as a Markov process $(\hat X_t)$ with semi-group $(\hat P_t)$ started in $\hat x$. The other finite-dimensional distributions can be computed in a similar way, and the claim follows. 
\end{proof} 

The kernel $\Lambda(\hat x,\dd x)$ has the interpretation of  a conditional probability, 
\begin{equation} \label{eq:lambda-meaning}
	\Lambda( \hat x,A) = \P( X_t \in A\mid f(X_t) = \hat x).   
\end{equation} 
Lemma~\ref{lem:subprocess} can be rephrased as follows. Suppose that the process $(X_t)$ preserves the conditional structure $\Lambda$, i.e., whenever the initial law of $(X_t)$ is such that Eq.~\eqref{eq:lambda-meaning} holds at $t=0$, then it holds for all $t\geq 0$. (In the examples below, this condition becomes: if $X_0$ is exchangeable, then $X_t$ is exchangeable for all $t\geq 0$.) Then $f(X_t)$ is Markovian with transition semi-group 
\begin{equation*}
	\hat P_t(\hat x,\hat A) = \P_{\Lambda(\hat x,\cdot)} \bigl( f(X_t) \in A\bigr). 
\end{equation*} 
%

Now let $(X_t)$, $(Y_t)$, $(\hat X_t)$, $(\hat Y_t)$ be Markov processes with respective Polish state spaces $E, F, \hat E, \hat F$ and semi-groups $(P_t)$, $(Q_t)$, $(\hat P_t)$, $(\hat Q_t)$. Let $f:E\to \hat E$ and $g:F\to \hat F$ be measurable surjective maps, and $\Lambda:\hat E\times \mathcal{B}(E)\to [0,1]$, $K:\hat F\times \mathcal{B}(F)\to [0,1]$ transition kernels such that $\Lambda, (P_t), (\hat P_t),f$ satisfy the support and intertwining conditions of Lemma~\ref{lem:subprocess}, and $K, (Q_t), (\hat Q_t),g$ satisfy the analogous conditions. 
Fix $H:E\times F\to \R$ measurable and bounded.
\begin{prop} \label{prop:funct-duality}
	In the situation described above the following holds: 
If $(X_t)$ and $(Y_t)$ are dual with respect to $H$, then $(\hat X_t)$ and $(\hat Y_t)$ are dual with respect to 
		\begin{equation*}
		              \hat H(\hat x,\hat y):= \int_{E\times F} \Lambda(\hat x ,\dd x) K(\hat y,\dd y) H(x,y).
            	\end{equation*} 
\end{prop}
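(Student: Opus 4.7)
The plan is a chain of rewrites: unfold the definition of $\hat H$, use the intertwining relation of Lemma~\ref{lem:subprocess} to swap $\hat P_t$ for $P_t$, invoke the assumed duality of $(X_t)$ and $(Y_t)$ with respect to $H$ in its semi-group form~\eqref{eq:duality_semigroup}, and finally apply the analogous intertwining for $K$, $(Q_t)$, $(\hat Q_t)$, $g$ to swap $Q_t$ back for $\hat Q_t$.

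Concretely, I would start from
\begin{equation*}
\hat\E_{\hat x}\hat H(\hat X_t,\hat y) = \int \hat P_t(\hat x,\dd\hat z)\,\Lambda(\hat z,\dd x)\,K(\hat y,\dd y)\,H(x,y),
\end{equation*}
where Fubini is available because $H$ is bounded and all the outer measures are probabilities. Equation~\eqref{eq:subprocess-intertwining}, stated in Lemma~\ref{lem:subprocess} for indicators $\mathbf{1}_A$, extends by a standard monotone class / linearity argument to bounded measurable integrands, and so lets me replace $\int \hat P_t(\hat x,\dd\hat z)\,\Lambda(\hat z,\dd x)$ by $\int \Lambda(\hat x,\dd x_0)\,P_t(x_0,\dd x)$. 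Fubini and the duality identity $P_t H(\cdot,y)(x_0)=Q_t H(x_0,\cdot)(y)$ then turn the expression into
\begin{equation*}
\int \Lambda(\hat x,\dd x_0)\,K(\hat y,\dd y_0)\,Q_t(y_0,\dd y)\,H(x_0,y).
\end{equation*}
A second application of the intertwining, this time to $K$ and $(Q_t)$, replaces $\int K(\hat y,\dd y_0)\,Q_t(y_0,\cdot)$ by $\int \hat Q_t(\hat y,\dd\hat w)\,K(\hat w,\cdot)$, and after recollecting factors one reads off $\int \hat Q_t(\hat y,\dd\hat w)\,\hat H(\hat x,\hat w)=\hat\E^{\hat y}\hat H(\hat x,\hat Y_t)$, which is duality of $(\hat X_t)$ and $(\hat Y_t)$ with respect to $\hat H$.

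I do not expect a genuine obstacle here: the whole argument is bookkeeping in a finite product of probability kernels. The only place that warrants mild care is the extension of the intertwining identity~\eqref{eq:subprocess-intertwining} from set indicators to the bounded measurable integrand $H(\cdot,y)$ (and its analogue for $g$ and $K$), which follows by linearity on simple functions and bounded convergence. Note that the support property $\Lambda(\hat z,f^{-1}(\{\hat z\}))=1$ from Lemma~\ref{lem:subprocess} is not needed for this calculation; only the intertwining \eqref{eq:subprocess-intertwining} and its counterpart for $K$ enter.
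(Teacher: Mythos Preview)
Your argument is correct and matches the paper's proof: the paper carries out the same chain of rewrites using compact tensor notation $(\hat P_t\otimes\id)\hat H=(\hat P_t\Lambda\otimes K)H=(\Lambda P_t\otimes K)H=(\Lambda\otimes K)(P_t\otimes\id)H=(\Lambda\otimes K)(\id\otimes Q_t)H=(\id\otimes\hat Q_t)\hat H$, which is exactly your integral computation in operator shorthand. The only point the paper adds that you omit is a one-line remark that $\hat H$ is measurable (needed for Definition~\ref{def:duality1}), which follows from measurability of $\hat x\mapsto\Lambda(\hat x,A)$ and $\hat y\mapsto K(\hat y,B)$; your observation that the support condition on $\Lambda$ is not used here is correct.
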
 
In view of Eq.~\eqref{eq:lambda-meaning} and its analogue for $(Y_t)$ and $K(\hat y,\dd y)$, the new function $\hat H$ should be thought of as a conditional expectation of $H$, see  Eq.~\eqref{eq:hath-meaning} below. 
\begin{proof} 
	First we note that $\hat H$ is measurable; this follows from the measurability of $\hat x\mapsto \Lambda(\hat x,A)$ and $\hat y\mapsto K(\hat y,B)$ for all measurable $A\subset E$ and $B\subset F$. Suppose that $(X_t)$ and $(Y_t)$ are dual with respect to $H$. Writing $P_t\otimes \mathrm{id}$ and $\mathrm{id}\otimes Q_t$ for the actions of the semi-groups on the first and second variables of $H$, we thus have $(P_t \otimes \mathrm{id}) H = (\mathrm{\id} \otimes Q_t) H$. Moreover, 
\begin{align*} 
	(\hat P_t \otimes \mathrm{id})\,  \hat H &= (\hat P_t \otimes \mathrm{id}) (\Lambda \otimes K)\,  H 
	= (\hat P_t \Lambda \otimes K) \, H  
	 = (\Lambda P_t \otimes K) \, H \\
	& =(\Lambda\otimes K) (P_t\otimes \mathrm{id}) \, H 
	= (\Lambda\otimes K) (\mathrm{id} \otimes Q_t) \, H 
	 = (\Lambda \otimes \hat Q_t K) \, H \\
	& = (\mathrm{id} \otimes \hat Q_t)\,  \hat H. 
\end{align*} 
Therefore $(\hat X_t)$ and $(\hat Y_t)$ are dual with respect to $\hat H$. 
\end{proof} 

It is instructive to give a probabilistic construction when $(X_t)$ and $(Y_t)$ are in strong pathwise duality with respect to $H$. Fix $t>0$ and let $(\Omega,\mathcal{F},\P)$ and $(X_s^x)$, $(Y_s^y)$ be as in Definition~\ref{def:strongly-pathwise}. Assume in addition that the maps $(x,\omega)\mapsto X_s^x(\omega)$ are measurable, similarly for $Y_s^y$. 
Enlarging the probability space we may assume that there are families of random variables $V_{\hat x}:\Omega \to E$ and $W_{\hat y}: \Omega \to F$ such that $V_{\hat x}$ has law $\Lambda(\hat x,\cdot)$, $W_{\hat y}$ has law $K(\hat y,\cdot)$, and the variables $V_{\hat x}$, $W_{\hat y}$ are independent between themselves and independent of the variables $X_s^x$, $Y_s^y$. We define 
\begin{equation*} 
	\hat X_s^{\hat x}(\omega):= f\Bigl(X_s^{V_{\hat x}(\omega)} (\omega)\Bigr), \quad 
	\hat Y_s^{\hat y}(\omega):= g\Bigl( Y_s^{W_{\hat y}(\omega)} (\omega)\Bigr).
\end{equation*} 
Thus we randomize the parameters $x$ and $y$, and apply the surjective maps $f$ and $g$. 
Lemma~\ref{lem:subprocess} shows that the finite-dimensional distribution of $(\hat X_s^{\hat x})_{s\in [0,t]}$ are those of $(f(X_s))_{s\in [0,t]}$ when $X_0$ has law $\Lambda(x',\cdot)$, and similarly for $(\hat Y^{\hat y}_s)$.  Moreover $\hat X_0^{\hat x} = \hat x$ and $Y_0^{\hat y} = \hat y$ $\P$-almost surely. 

Assume in addition that for every $s\in [0,t]$, the families $\{X_s^x\}_{x\in E}$  and $\{Y_{t-s}^y\}_{y\in F}$ are independent (this assumption is satisfied in the examples below). 
Then $\{X_s^{V_{\hat x}}\}_{x\in E}$ and $\{Y_{t-s}^{W_{\hat y}}\}_{y\in F}$ are independent as well, and in view of Eq.~\eqref{eq:lambda-meaning} we have 
\begin{equation*}
	\P\Bigl( X_s^{V_{\hat x}}\in \hat A,  Y_{t-s}^{W_{\hat y}(\omega)} \in \hat B\, \Big|\,  
		\sigma(\hat X_s^{\hat x},\hat Y_{t-s}^{\hat y} ) \Bigr)
		= \Lambda( \hat X_s^{\hat x}, \hat A) K(\hat Y_{t-s}^{\hat y},\hat B),\quad \P\text{-a.s.}
\end{equation*} 
It follows that 
\begin{equation} \label{eq:hath-meaning} 
 	\hat H(\hat X_s^{\hat x}, \hat Y_{t-s}^{\hat y}) 
= \E \Bigl[H(X_s^{V_{\hat x}},Y_{t-s}^{W_{\hat y}})\, \Big|\, 
\sigma({\hat X}_s^{\hat x},{\hat Y}_{t-s}^{\hat y} )
\Bigr],\quad \P\text{-a.s.} 
\end{equation}
As a consequence, 
\begin{equation*}
	\E \hat H(\hat X_s^{\hat x}, \hat Y_{t-s}^{\hat y}) = \E H(X_s^{V_{\hat x}},Y_{t-s}^{W_{\hat y}}) = \int_{E\times F} \Lambda( \hat x,\dd x) K(\hat y,\dd y) \E H(X_s^x,Y_{t-s}^y)
\end{equation*} 
is independent of $s\in [0,t]$, for all $\hat x\in \hat E$ and $\hat y\in \hat F$. 
This provides a pathwise construction for the duality of $(\hat X_t)$ and $(\hat Y_t)$, though the latter duality need not be strongly or conditionally pathwise in the sense of Definitions~\ref{def:strongly-pathwise} and~\ref{def:conditional-pathwise}. 

 \begin{example}[Moran model and block-counting process] Recall the strong pathwise duality of the voter model and coalescing random walk obtained at the end of the last section from a graphical representation. Fix $N\in \N.$ 
Let $A_t:=\{i:X_t^i=1\}$ and $B_t=\{i: Y_t^i\}=1,$ and recall from Example 1.2 that coalescing duality in this context means duality with respect to
$H(A,B)=1_{\{A\cap B=\emptyset\}}.$ Note that $|A_t|=N Z_t(\{1,...,N\}),$ where $Z_t$ is the empirical process of $X_t.$ We choose $f=g=|\cdot|,$ and define $\Lambda(a,\cdot)$ as the uniform distribution over all configurations $A$ such that $|A|=a,$ and analogously define $K(b,\cdot).$ This choice of measure means that we choose \emph{exchangeable} initial conditions. Assume that the driving Poisson processes $\lambda_{ij}$ in the graphical representation have the same intensity for all $i,j.$ Then $A_t$ and $B_t$ remain exchangeable for all times $t,$ and the intertwining relation of Lemma \ref{lem:subprocess} is satisfied. The function $\hat{H}$ from Prop. \ref{prop:funct-duality} then is
$$\hat{H}_N(a,b)=\P(|A\cap B|\;\big|\; |A|=a, |B|=b),$$ 
and therefore this proposition yields that $(|A_t|)$ and $(|B_t|)$ are dual with respect to $\hat{H}_N.$ This duality was derived in \cite{Moehle99}. Note that 
\begin{equation*}\begin{split}\hat{H}_N(a,b)=&\frac{(N-|A|)(N-|A|-1)\cdots (N-|A|-|B|+1)}{N(N-1)\cdots (N-|B|+1)} 
= \mathrm{Hyp}(N,|A|,|B|)(0), 
\end{split}
\end{equation*}
where $\mathrm{Hyp}(N,|A|,|B|)$ denotes the hypergeometric distribution function. 
\end{example}

\begin{example}[$q-$dual processes] Generalizing Example 4.7, we consider set-valued $q-$dual processes $(A_t), (B_t)$ (cf. Example 4.5) constructed from a graphical representation. This means we work with Poisson processes $\lambda_{ij}(s), i,j=1,...,N, 0\leq s\leq t,$ realized on some probability space $(\Omega^P, \mathcal F^P, \P^P).$ If the intensities are the same for each pair $i,j,$ then the sigma-algebra generated by the Poisson processes is exchangeable. Generalizing the situation of Example 4.7, we let $Z_{N,|A|,|B|}\sim\mathrm{ Hyp} (N, |A|,|B|),$ and define
$$\tilde{H}_{N,q}(|A|,|B|):=\E[q^{Z_{N,|A|,|B|}}]$$
the generating function of $Z_{N,|A|,|B|}.$ For fixed $a,b\in \N$ we can realize $(A_s), (B_s)$ on $(\Omega^P, \mathcal F^P, \P^P).$ Choosing $\Lambda, K$ as in Example 4.7, we obtain (cf. Lemma 4.5) the duality
$$\E [\tilde H_{N,q}(|A_t|,|B_0|)] = \E [\tilde H_{N,q}(|A_s|,|B_{t-s}|)], \quad 0\leq s\leq t.$$
\end{example}

In the graphical construction of $q$-dualities we have so far used different Poisson processes for different types of transitions or mechanisms. By basic properties of Poisson processe, we could use one Poisson process per pair of lines, even if we allow for more than one mechanism. The graphical representation then has only one type of arrow, and the processes are constructed from the arrow-configuration by following time forwards, and whenever an arrow is encountered, the type of transition is determined by an additional random variable. We give an example in the above context of $q-$duality of particle systems.

\begin{example}[Non-deterministic mechanisms, conditional duality]
In the set-up of $q-$dual mechanisms on $\{0,1\}^G$ we assume that $(X_t), (Y_t)$ are constructed from a graphical representation of Poisson processes (cf. Ex. 4.8) using two different mechanisms $f^1$ and $f^2.$ We want a mechanism $f^1$ to happen at rate $\alpha_1,$ and $f^2$ at rate $\alpha_2.$ Assume that the Poisson processes for each pair has the same rate $\lambda=a_1+\alpha_2.$ This means that at each time, the law of the arrow in the graphical representation is exchangeable. Assume that at a given time $\tau$ there is an arrow from $i$ to $j.$ We will give it type 1 corresponding to mechanism $f^1$ with probability $q=\frac{\alpha_1}{\alpha_1+\alpha_2},$ and type 2 with probability $1-q.$ We could think of the arrow as a random mechansim which is described by the transition matrix
$$P(x,y)=q1_{\{y=f^1(x)\}}+(1-q)1_{\{y=f^2(x)\}}.$$
In \cite{JansenKurt}, a concrete example is given where such a construction leads to a $q-$duality, which is pathwise in the sense that it is constructed from one realization of the graphical representation, but not strongly pathwise, as it is obtained by averageing over the random mechanisms. It can be viewed as a conditional duality by fixing a sequence $(Z_n)_{n\in\N}$ of iid Bernoulli random variables with parameter $q,$ independent of the Poisson processes, and defining $\mathcal D:=\sigma(\{Z_n, n\in\N\}).$ Then $(X_t)$ and $(Y_t)$ are dual conditional on $\mathcal D.$
\end{example}
This construction makes use of the thinning property of Poisson processes. A related approach, also in the context of interacting particle systems, is described in \cite{SudburyLloyd97}, \cite{AthreyaSwart}, using thinnings of the (particle) processes instead of the Poisson processes, leading to similar results.

\subsection{Rescaled processes}\label{sec:pathwise_rescaled}
So far we considered processes with discrete state space, mostly interacting particle systems. A natural question to ask is whether a (pathwise) duality is preserved in some sense after rescaling. Such ideas were exploited for example in \cite{Swart, AlkemperHutzenthaler} and in a more sophisticated way in \cite{DonnellyKurtz1, DonnellyKurtz2}. One simple approach is to approximate the hypergeometric distribution showing up in the context of $q-$duality by the binomial distribution. One obtains (see \cite{JansenKurt} for the proof)
\begin{prop}
\label{thm:prototype_general}
Let $(X^N_t)$, $(Y^N_t)$ be Markov processes with state space $E_N$ that 
are $q_N-$dual for some $q_N\in[-1,1)$.  
Choose exchangeable initial conditions $X_0^N, Y_0^N\in E_N$, fixing $|X_0^N|=k_N, |Y_0^N|=n_N$, and suppose that $X_t^N$ and $Y_t^N$ stay exchangeable for all $t>0$.
Assume that $n_N/N\to 0$ and $\E[|Y^N_{t_N}|/N]\to 0$ as $N\to \infty$, for some time scale $t_N\geq 0$.  
Then
$$\lim_{N\to\infty}\E\left[\left(1+(q_N-1)\frac{|X_0^N|}{N}\right)^{|Y^N_{t_N}|}\right]=
\lim_{N\to\infty}\E\left[\left(1+(q_N-1)\frac{|X_{t_N}^N|}{N}\right)^{|Y^N_{0}|}\right],$$
provided that the limits exist.
\end{prop}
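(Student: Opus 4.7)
The strategy is to combine the pathwise $q_N$-duality with the exchangeability assumption to obtain an exact identity between hypergeometric generating functions, and then to approximate the hypergeometric generating function by the binomial generating function that appears in the proposition.

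First I would invoke the pathwise $q_N$-duality of Lemma~\ref{lem:dual_mech_proc}, realizing $X$ and $Y$ independently on a common probability space, to obtain the exact identity $\E[q_N^{|X_0^N\wedge Y_{t_N}^N|}] = \E[q_N^{|X_{t_N}^N\wedge Y_0^N|}]$. Since $X_t^N$ and $Y_t^N$ stay exchangeable, conditionally on their cardinalities they are uniformly distributed over subsets of those sizes, so the intersection cardinality is hypergeometrically distributed. Writing $\tilde H_{N,q}(a,b) = \E[q^{Z_{N,a,b}}]$ with $Z_{N,a,b}\sim\mathrm{Hyp}(N,a,b)$, the identity becomes
\begin{equation*}
\E\bigl[\tilde H_{N,q_N}(k_N,\, |Y_{t_N}^N|)\bigr] \;=\; \E\bigl[\tilde H_{N,q_N}(|X_{t_N}^N|,\, n_N)\bigr],
\end{equation*}
which is valid for every $N$.

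Next I would approximate $\tilde H_{N,q_N}(a,b)$ by $(1+(q_N-1)a/N)^b$, which is the generating function of $\mathrm{Bin}(b,a/N)$ at $q_N$. The classical coupling of $b$ samples with and without replacement (draw with replacement, resample upon collision) gives the total-variation bound $d_{\mathrm{TV}}(\mathrm{Hyp}(N,a,b),\mathrm{Bin}(b,a/N)) \leq \binom{b}{2}/N$, and since $|q_N|\leq 1$ this yields the pointwise estimate
\begin{equation*}
\bigl|\tilde H_{N,q_N}(a,b) - (1+(q_N-1)a/N)^b\bigr| \;\leq\; \frac{b(b-1)}{N}.
\end{equation*}
Substituting $(a,b) = (|X_{t_N}^N|,n_N)$ and $(a,b)=(k_N,|Y_{t_N}^N|)$ and taking expectations bounds the discrepancy between the two sides of the target identity and the two sides of the Step~1 identity by $n_N(n_N-1)/N$ and $\E[|Y_{t_N}^N|(|Y_{t_N}^N|-1)]/N$, respectively.

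The main obstacle is to show that both of these error terms vanish in the limit: the stated hypotheses $n_N/N\to 0$ and $\E[|Y_{t_N}^N|/N]\to 0$ do not \emph{a priori} control the relevant second moments. In the principal applications, $|Y_t^N|$ is non-increasing (for coalescing dynamics, $q_N=0$, or annihilating dynamics, $q_N=-1$), so that $|Y_{t_N}^N|\leq n_N$ almost surely and both errors collapse to $O(n_N^2/N)$, which tends to $0$ under the mild strengthening $n_N^2/N\to 0$. In the general case one needs a finer estimate: for $q_N\in [0,1)$ the refined inequality $|q_N^Z - q_N^{Z'}|\leq (1-q_N)|Z-Z'|$ saves an extra factor of $1-q_N$, while for $q_N\in[-1,0)$ one may truncate at a level $M_N$ with $M_N\to\infty$ and $M_N = o(\sqrt{N})$, using the uniform bound $|\tilde H|,|(1+(q_N-1)a/N)^b|\leq 1$ to control the tail contribution from $\{|Y_{t_N}^N|>M_N\}$ via Markov's inequality applied to $\E[|Y_{t_N}^N|/N]$. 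Once both approximation errors are shown to vanish, passing to the limit in the Step~1 identity yields the claimed equality of the two limits.
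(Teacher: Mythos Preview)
Your overall strategy matches the paper's sketch exactly: the paper says ``one simple approach is to approximate the hypergeometric distribution showing up in the context of $q$-duality by the binomial distribution'' and defers the details to \cite{JansenKurt}. Your Step~1 identity is precisely the content of Example~4.8, and the binomial approximation of $\tilde H_{N,q}$ is the intended idea.

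The gap is in your error control for the ``general case''. For the truncation argument, Markov's inequality gives $\P(|Y_{t_N}^N|>M_N)\le \E[|Y_{t_N}^N|]/M_N = N\epsilon_N/M_N$ with $\epsilon_N:=\E[|Y_{t_N}^N|/N]$, so the total approximation error is of order $M_N^2/N + N\epsilon_N/M_N$; optimising over $M_N$ yields an error of order $N^{1/3}\epsilon_N^{2/3}$, which vanishes only if $\epsilon_N=o(N^{-1/2})$, i.e.\ $\E[|Y_{t_N}^N|]=o(\sqrt{N})$ --- strictly stronger than the stated hypothesis $\epsilon_N\to 0$. The deterministic side has the same defect: $n_N(n_N-1)/N\to 0$ needs $n_N=o(\sqrt{N})$, not merely $n_N/N\to 0$. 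The $(1-q_N)$ saving for $q_N\in[0,1)$ does not rescue this, since under the collision coupling $\E|Z-Z'|$ is still of order $b^2/N$. In the applications the paper has in mind (moment duality with $n_N=n$ fixed, and $|Y_t^N|$ non-increasing for coalescing or annihilating dynamics so that $|Y_{t_N}^N|\le n$ a.s.), your $b^2/N$ bound is ample, which is presumably why the proposition is stated with the weaker hypotheses; but as a proof of the proposition \emph{as written} the argument does not close, and you should either strengthen the hypotheses to $n_N=o(\sqrt N)$ and $\E[|Y_{t_N}^N|]=o(\sqrt N)$, or invoke the monotonicity of $|Y_t^N|$ explicitly.
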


%
Depending on the scaling, this result may lead to a moment duality, if $\frac{|X_t^N|}{N}\to X_t,$ and $|Y_t^N|\to Y_t,$ as we then get $\E[(1+(q-1) X_0)^{Y_t}]=\E[(1-(q-1)X_t)^{Y_0}].$ If $X^N$ and $Y^N$ have the same scaling, we may get a Laplace duality, see \cite{AlkemperHutzenthaler} for an example.

\begin{definition}[$q-$moment duality]\label{def:q-moment}
Let $(p_t)_{t\geq 0}$ and $(n_t)_{t\geq 0}$ be moment dual Markov processes with values in $\R$ and $\N$ respectively. Assume that there exist $q$dual interacting particle systems $(X_t^N), (Y_t^N)$ for $q\in[-1,1)$ such that $\left(1-(q-1)\frac{X^N_t}{N}\right)$ converges weakly to $(p_t),$ and $(Y_t^N)$ converges weakly to $(n_t).$ Then we say that $(p_t)$ and $(n_t)$ are \emph{$q-$moment dual}.\\
If $q=0,$ we say that $(p_t)$ and $(n_t)$ are \emph{coalescing moment duals}, for $q=-1$ we call them \emph{annihilating moment duals}.
\end{definition}
In the next section we will see that moment dualities obtained in this manner retain some of the properties of the approximating particle systems, such as monotonicity.
An example of a moment duality obtained in this way is the duality of the $(1,b,c,d)-$braco-process and the $(1,b,c,d)-$resem-process, see \cite{AlkemperHutzenthaler, AthreyaSwart}.

\begin{remark}[Non-consistency]
Note that using basic mechanisms gives a (strong) pathwise construction of all the approximating processes $(X_t^N), (Y_t^N).$ In passing to the limit, this pathwise construction suffers two problems: The duality with respect to the hypergeometric distribution is not strongly pathwise, and in particular we need to pass from $N$ to $N+1,$ which necessitates a new choice of the realization of the graphical representation. Hence, the construction given here is only pathwise for finite $N,$ and not for the limiting processes. 
\end{remark}

\begin{remark}[Lookdown-construction] 
If this step from $N$ to $N+1$ can be done in a consistent way, keeping the graphical representation of step $N$ and add arrows in step $N+1,$ one would obtain a pathwise construction for the limiting duality. This was achieved by Donnelly and Kurtz \cite{DonnellyKurtz1} via the so-called lookdown-construction, which provides a pathwise construction of the duality of the Fleming-Viot process and Kingman's coalescent, and has since then be successfully applied to many different situations \cite{DonnellyKurtz1, DonnellyKurtz2, Birkner_etal}. It is outside the scope of the present paper to do the lookdown-construction full justice. It roughly works as follows: As before, we have a graphical representation, with lines of length $T$ representing time for each particle, labelled by $i\in\N.$ In the lookdown construction, these are usually drawn horizontally instead of vertically. We also have Poisson processes with rates $\lambda_{ij},$ and we draw arrows from $i$ to $j,$ but now they are only allowed to point in one direction: top to bottom, which for exchangeable models is always possible. We can say that $\lambda_{ij}=0$ if $i\leq j.$ Mechanisms now work in only one direction: If an arrow form $i$ to $j$ is encountered, meaning that $i>j,$ then the site $i$ changes, according to a mechanism and according to the state at $j$ that is seen when following the arrow down -- hence the name "lookdown construction". In order to apply the lookdown construction, it is crucial to choose the rates in a consistent way, that is, that the arrows for the first $N$ lines can be kept if more lines are added. This means that we can study the genealogy of a sample of size $n<N$ within the construction of $N$ line. The result of \cite{DonnellyKurtz1} tells us that it is possible to take the limit as $N\to\infty.$
\begin{center}
\setlength{\unitlength}{1cm}
\begin{picture}(12,5)
\multiput(1,1)(0,1){5}{\line(1,0){10}}

\multiput(2,2)(5,0){2}{\vector(0,-1){1}}
\multiput(1.7,3)(4.2,0){2}{\vector(0,-1){2}}
\put(9,3){\vector(0,-1){1}}
\put(3,4){\vector(0,-1){3}}
\put(10.2,4){\vector(0,-1){3}}
\put(1.1,4){\vector(0,-1){1}}
\put(5,4){\vector(0,-1){2}}
\put(2.2,5){\vector(0,-1){2}}
\put(3.5,5){\vector(0,-1){4}}
\put(4,5){\vector(0,-1){1}}
\put(6,5){\vector(0,-1){3}}
\put(62,5){\vector(0,-1){1}}
\put(8.3,5){\vector(0,-1){2}}
\put(9.6,5){\vector(0,-1){1}}
\put(10.5,5){\vector(0,-1){4}}
\put(0.5,1){1}
\put(0.5,2){2}
\put(0.5,3){3}
\put(0.5,4){4}
\put(0.5,5){5}
\put(1,1){\vector(1,0){10}}
\put(11.2,0.8){$T$}
\put(2.5,0){\rm{Figure 3: \small{Lookdown-construction}}}
\end{picture}
\end{center}
\end{remark}

\section{Monotonicity}\label{sec:monotonicity}
An interesting question is which properties of Markov processes are preserved under duality. One such property is (stochastic) monotonicity. Its intrinsic relation to duality has been investigated for example in \cite{Siegmund76, AsmussenSigman, moehle-cones}. In particular, there is a connection between monotonicity and the existence of a Siegmund dual. We state this connection in Theorem \ref{prop:monotone_dual}, and prove it using the results from Section \ref{sec:fa}. Moreover, we discuss other duality functions, in particular $q-$moment duals in Corollary \ref{cor:monotone_moment}.

\begin{definition}[Monotonicity]
A stochatic process $(X_t)$ on a partially ordered state space $E$ is called (stochastically) \emph{monotone}, if $x\leq y$ implies $\P_x(X_t\geq z)\leq \P_y(X_t\geq z), x,y\in E.$
\end{definition}
 
 \begin{remark}
 Note that $\P_x(X_t\geq z)\leq \P_y(X_t\geq z)$ if and only if $\E_x[f(X_t)]\leq E_y[f(X_t)]$ for every continuous increasing function $f.$ Hence, a Feller process $(X_t)$
with semigroup $P_t$  is monotone if and only if $P_tf$ is continuous and monotone for every continuous and monotone function $f.$
 \end{remark}
 
\begin{remark}
In the language of spin systems, monotone processes are often called \emph{attractive}.
\end{remark}

Monotonicity is in some situations equivalent to having a coalescing dual or Siegmund dual \cite{Siegmund76}, see also \cite{asmussen, CliffordSudbury85}. We give below a proof of this result which illustrates the connection to the invariance of the set $\mathcal V_{1,+}=\{\int_F H(\cdot, y)\nu(\dd y)\;|\; \nu\in\mathfrak M_{1,+}(F)\},$ from Section \ref{sec:fa}. \\
In the case of Siegmund duality, that is, $E=F=\R$ and $H(x,y)= \mathbf 1_{\{x\geq y\}}$ we see that $f\in  \mathcal{V}_{1,+}$ if and only if there exists $\nu\in\mathfrak M_{1,+}(F)$ such that 
$$f(x)=\int \mathbf{1}_{\{x\geq y\}}\nu(\dd y)=\int_{\{y\leq x\}}\nu(\dd y)=\nu(\{y\leq x\}).$$ Hence,
$\mathcal V_{1,+}$ consists of the cumulative distribution functions of probability measures, i.e.,  right-continuous, monotone increasing functions with $\lim_{x\to -\infty} f(x) = 0$ and $\lim_{x\to \infty} f(x) = 1$. The set $\mathcal{V}$ consists of the right-continuous functions with bounded variation.

\begin{theorem}
\label{prop:monotone_dual}
\begin{itemize}
\item[(a)] Let $E$ be partially ordered. Let $(X_t)$ and $(Y_t)$ be dual with respect to $H(x,y)=\mathbf1_{\{x\geq y\}}.$
Then $(X_t)$ and $(Y_t)$ are monotone.
\item[(b)] Let $(X_t)$ be a monotone process with state space $E=\R, E=[0,\infty)$ or $E=\N,$ such that $x\mapsto \P_x(X_t\geq y)$ is right-continuous for every $t\geq 0, y\in E$. Then there exists a process $(Y_t)$ on $E\cup \{\infty\}$ such that $(X_t)$ and $(Y_t)$ are dual with respect to $H(x,y)=\mathbf1_{\{x\geq y\}}.$
\end{itemize}
\end{theorem}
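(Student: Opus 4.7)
Part (a) follows directly from rewriting the duality equation as $\P_x(X_t \geq y) = \P^y(Y_t \leq x)$. Monotonicity of $(X_t)$ is immediate: for fixed $y$, the right-hand side is visibly non-decreasing in $x$, which is equivalent to monotonicity of $(X_t)$. For $(Y_t)$, fix $x$ and observe that for $y_1 \leq y_2$ we have $\P^{y_1}(Y_t \leq x) = \P_x(X_t \geq y_1) \geq \P_x(X_t \geq y_2) = \P^{y_2}(Y_t \leq x)$, so $\P^y(Y_t > x)$ is non-decreasing in $y$. To pass from $>$ to $\geq$, write $\{Y_t \geq z\} = \bigcap_{n \in \N} \{Y_t > z - 1/n\}$ and apply continuity from above of probability; the limit preserves monotonicity in $y$. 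On discrete $E$ the approximation step is unnecessary.

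For part (b) the plan is to invoke Theorem~\ref{prop:non-deg}. Set $F := E \cup \{\infty\}$ with the order extending that of $E$, where $\infty$ serves as a cemetery absorbing any mass of the dual that escapes the finite part of $E$, and extend $H(x,y) = \mathbf{1}_{\{x \geq y\}}$ accordingly to $E \times F$. The bilinear form $B_H$ is non-degenerate by the same argument as in Prop.~\ref{lem:non-degenerate}(1). For $E = \N$ the discrete clause of Theorem~\ref{prop:non-deg} applies directly; for the continuous state spaces, Assumption~\ref{ass:w-dense} can be checked via Lemma~\ref{lem:pwdense}(2): the functions $y \mapsto H(x_0, y) = \mathbf{1}_{\{y \leq x_0\}}$ with $x_0 \in E$ lie in $\mathcal{W}$, are separating, satisfy $H(x_0,\cdot)H(x_0',\cdot) = H(x_0 \wedge x_0',\cdot)$ (hence are closed under multiplication), and generate $\mathcal{B}(F)$.

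It remains to verify that $(P_t)$ leaves $\mathcal{V}_{1,+}$ invariant. Every $f \in \mathcal{V}_{1,+}$ has the form $f(x) = \int_F \mathbf{1}_{\{x \geq y\}} \nu(\dd y) = \nu(\{z \in F : z \leq x\})$ for some $\nu \in \prob(F)$, i.e., $f$ is right-continuous, non-decreasing, $[0,1]$-valued with the appropriate boundary behaviour. Fubini gives $(P_t f)(x) = \int_F \P_x(X_t \geq y) \nu(\dd y)$. Monotonicity of $(X_t)$ ensures that each integrand $x \mapsto \P_x(X_t \geq y)$ is non-decreasing, hence $P_t f$ is non-decreasing; the hypothesized right-continuity of $x \mapsto \P_x(X_t \geq y)$ combined with dominated convergence gives right-continuity of $P_t f$. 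For $E = \N$ and $E = [0,\infty)$ the minimum of $E$ lies in $E$ and no further boundary condition is required; for $E = \R$ any mass arising in the limit $x \to -\infty$ is absorbed as mass of the dual kernel at the cemetery. Thus $P_t f \in \mathcal{V}_{1,+}$, and Theorem~\ref{prop:non-deg} supplies a unique dual Markov semi-group $(Q_t)$ on $F$, which by the canonical construction determines a Markov process $(Y_t)$ on $E \cup \{\infty\}$ that is Siegmund dual to $(X_t)$. The main technical point is the correct interpretation of $\infty$ so that the boundary behaviour of functions in $\mathcal{V}_{1,+}$ is consistent with the semi-group, which is routine once the order on $F$ is fixed.
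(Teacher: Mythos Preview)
Your proof is correct and follows essentially the same route as the paper. Part~(a) is the same direct computation from the duality identity $\P_x(X_t\geq y)=\P^y(Y_t\leq x)$; your extra care in passing from $>$ to $\geq$ for $(Y_t)$ is a detail the paper glosses over with ``exchanging the roles of $X$ and $Y$''. Part~(b) is likewise the paper's argument: both reduce to invoking Theorem~\ref{prop:non-deg} after checking that $\mathcal V_{1,+}$ is $(P_t)$-invariant; the paper phrases this by noting that $x\mapsto \P_x(X_t\geq y)$ is itself a distribution function $Q_{t,y}$ and writing $P_tf(x)=\int_F\int_E \mathbf 1_{\{z\leq x\}}Q_{t,y}(\dd z)\nu(\dd y)$, which is just a more explicit version of your Fubini-plus-monotonicity-plus-right-continuity verification.
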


\emph{Proof of Theorem \ref{prop:monotone_dual}.}
$(a).$ Let $x\leq y.$ Then
$\P_x(X_t\geq z)=\P_z(Y_t\leq x)\leq \P_z(Y_t\leq y)=\P_y(X_t\geq z).$
Hence $(X_t)$ is monotone. Exchangeing the roles of $X$ and $Y$ completes the proof.
\\
$(b).$ If $x\mapsto \P_{x}(X_t\geq y)$ is monotone increasing and right-continuous, it is a distribution function of a probability measure $Q_{t,y}(\cdot)$ on $F.$ Let $f\in \mathcal V_{1,+},$ and let $\nu\in\mathfrak M_{1,+}(F)$ such that $f(x)=\int H(x,y)(\nu)(\dd y),$ with $H(x,y)=\mathbf 1_{\{x\geq y\}}$ (cf. the previous example). Let $(P_t)$ denote the semigroup of $(X_t).$ We get 
\begin{equation}
P_tf(x)=\E_x\left[\nu((-\infty, X_t])\right]=\int_{F}\int_E \mathbf{1}_{\{z\leq x\}} Q_{t,y}(\dd z)\nu(\dd y).\end{equation}
Defining a probability measure $\nu_t$ on $F$ by  $\int_E Q_{t,y}(\cdot)\nu(\dd y),$ we see that $\mathcal V_{1,+}$ is invariant under $P_t.$ By Proposition \ref{prop:non-deg}, this implies existence of a Siegmund dual \noemi{(note that Assumption \ref{ass:w-dense} is satisfied).}
\hfill$\Box$

\begin{cor}\label{cor:monotonicity_coal_dual}
Let $(X_t)$ and $(Y_t)$ be coalescing dual spin systems. Then $(X_t)$ and $(Y_t)$ are monotone. 
\end{cor}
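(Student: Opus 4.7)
The plan is to reduce the corollary to Theorem~\ref{prop:monotone_dual}(a) via the equivalence between coalescing and Siegmund dualities for spin systems noted in Example~1.2. Recall that on $E=\{0,1\}^G$ the map $\sigma:y\mapsto \mathbf 1-y$ is an order-reversing involution, and that coalescing duality of $(X_t)$ and $(Y_t)$ with respect to $H(x,y)=\mathbf 1_{\{x\wedge y=\mathbf 0\}}$ is equivalent to Siegmund duality of $(X_t)$ and $(\sigma(Y_t))$ with respect to $\mathbf 1_{\{x\leq y\}}$, because $x\wedge y=\mathbf 0\iff x\leq \mathbf 1-y$ componentwise.

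First I would apply Theorem~\ref{prop:monotone_dual}(a) to the Siegmund-dual pair $(X_t)$ and $(\sigma(Y_t))$. This immediately gives that $(X_t)$ is monotone, which settles half of the claim, and that the process $(Z_t):=(\sigma(Y_t))$ is monotone on $\{0,1\}^G$ equipped with the componentwise order.

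Next I would transfer monotonicity from $(Z_t)$ to $(Y_t)$ using the fact that $\sigma$ reverses the order. More precisely, suppose $y_1\leq y_2$ and $z\in\{0,1\}^G$. Then $\sigma(y_1)\geq \sigma(y_2)$, and $\{Y_t\geq z\}=\{Z_t\leq \sigma(z)\}$, so
\begin{align*}
\P^{y_1}(Y_t\geq z)&=\P^{\sigma(y_1)}(Z_t\leq \sigma(z))=1-\P^{\sigma(y_1)}(Z_t\in U)\\
&\leq 1-\P^{\sigma(y_2)}(Z_t\in U)=\P^{y_2}(Y_t\geq z),
\end{align*}
where $U=\{u:u\not\leq\sigma(z)\}$ is an up-set and the inequality uses monotonicity of $(Z_t)$ applied to $\sigma(y_1)\geq\sigma(y_2)$ together with the equivalent formulation of stochastic monotonicity in terms of increasing sets. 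This is exactly the monotonicity of $(Y_t)$.

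I do not expect any genuine obstacle here: the whole content is the bookkeeping between the two equivalent dualities and the fact that order-reversing involutions preserve stochastic monotonicity. The only point to be a little careful about is spelling out the equivalence between ``$\P_x(X_t\geq z)$ is increasing in $x$'' and the corresponding statement for arbitrary up-sets, which is standard and used implicitly in the manipulation above.
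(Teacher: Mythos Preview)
Your argument is correct and follows exactly the paper's route: use the equivalence from Example~1.2 between coalescing duality of $(X_t),(Y_t)$ and Siegmund duality of $(X_t),(\mathbf 1-Y_t)$, invoke Theorem~\ref{prop:monotone_dual}(a) to get monotonicity of $(X_t)$ and of $(\mathbf 1-Y_t)$, and then transfer monotonicity back through the order-reversing involution $y\mapsto\mathbf 1-y$. Your handling of this last step is in fact more explicit than the paper's one-line justification.
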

\begin{proof}
By Proposition \ref{prop:monotone_dual}, $(X_t)$ and $(1-Y_t)$ are both monotone.  For spin systems, $x\leq y$ if and only if $ x\geq 1-y,$ hence monotonicity of $(Y_t)$ follows from the monotonicity of $(1-Y_t).$
\end{proof}

The converse is not true, as can be seen by analysing once more $\mathcal V_{1,+}.$ In the case of coalescing duals, we obtain with $H(A,B) = 1_{\{A\cap B = \emptyset\}}$ that $f\in \mathcal{V}_{1,+}$ if and only if $f(X)=\sum_{B\subset A^c}\nu(B),$ or equivalently $f (A) =\sum_C \lambda(C) \mathbf{1}_{\{A\subset C\}}$ for some $\lambda\in \mathfrak M_{1,+}(\mathcal{P}(\Lambda)).$ In particular, $\mathcal{V}_{1,+}$ is a subset of the functions that are monotone decreasing with respect to the partial order given by $A \leq B$ iff $A\subset B$. If $(X_t)$ has a dual with respect to $H$, then the semi-group maps every 
indicator function $\mathbf{1}_{\{\cdot \subset C\}}$ into $\mathcal{V}_{1,+}$ and 
therefore we see again that $(X_t)$ is decreasing. But the converse is wrong, because the semi-group of a monotone Markov process might map a function $\mathbf{1}_{\{\cdot\subset B\}}$ to a monotone function not in $\mathcal{V}_{1,+}$. \\

For duality functions other than the Siegmund (or coalescing) duality, monotonicity need not be preserved under duality:

\begin{example}(Monotone process with non-monotone dual)
Consider the voter model on $\Z^d.$ This is clearly a monotone process, which is either seen directly, or by Proposition \ref{prop:monotone_dual}, since it has coalescing random walk as a coalescing dual. It also has an annihilating dual, which is annihilating random walk. This is seen from Lemma \ref{lem:ann_dual} in the appendix. This dual is not monotone: Let $x,y\in\{0,1\}^{\Z^d},$ with $x=\delta_i+\delta_j, i\neq j,$ and $y=\delta_i.$ Then we have $x\geq y,$ but annihilating random walk started at $x$ will almost surely reach state $\bf 0,$ while the process started in $y$ will always have at least one 1. 
\end{example}

If we have a graphical representation, monotonicity of the process follows from monotonicity of the mechanism. This will help us to prove monotonicity of coalescing moment duals, cf. section \ref{sec:pathwise_rescaled}

\begin{definition}[Monotone mechanism]
Let $E$ be partially ordered. A function $f:E\to E$ is monotone if $x\leq y\Rightarrow f(x)\leq f(y),x,y\in E.$ Similarly, a basic mechanism $f:\{0,1\}^2\to\{0,1\}^2$ is called monotone if $x\leq y\Rightarrow f(x)\leq f(y), x,y\in \{0,1\}^2.$
\end{definition}

\begin{prop}
Let $(X_t)$ be a process defined from a graphical representation as described in section \ref{sec:pathwise_strong} using Poisson-processes and basic mechanisms. $(X_t)$ is monotone if and only if all basic mechanisms are monotone.
\end{prop}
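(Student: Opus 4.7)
The plan is to prove the two directions separately. For the ``if'' direction, I would use the pathwise coupling supplied by the graphical representation. Given $x\leq y$ in $\{0,1\}^G$, construct $(X_t^x)$ and $(X_t^y)$ on the common probability space that carries the Poisson arrows by driving both processes with the same realization of arrows and applying the same basic mechanisms. I would prove by induction on the successive arrival times that $X_t^x\leq X_t^y$ componentwise, $\P$-almost surely, for all $t\geq 0$. Between arrivals the configurations do not change and the order is trivially preserved. At an arrival of an arrow of type $k$ from $i$ to $j$ at time $\tau$, only the two coordinates $i,j$ are updated via $f^k$; the inductive hypothesis gives $(X_{\tau-}^x(i),X_{\tau-}^x(j))\leq(X_{\tau-}^y(i),X_{\tau-}^y(j))$ in $\{0,1\}^2$, monotonicity of $f^k$ propagates this inequality across the jump, and all other coordinates are unchanged. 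Stochastic monotonicity of $(X_t)$ is then immediate from this pathwise domination.

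For the converse I argue by contraposition: suppose some basic mechanism $f^k$, used with positive rate $\lambda^k_{ij}>0$, fails to be monotone. Because the componentwise order on $\{0,1\}^2$ is generated by its four covering relations, there is a pair $a,b\in\{0,1\}^2$ differing in exactly one coordinate $m$ (with $a_m=0,b_m=1$ and $a_{m'}=b_{m'}$) together with a witness coordinate $l\in\{1,2\}$ such that $f^k(a)_l=1$ and $f^k(b)_l=0$. The strategy is to test the infinitesimal criterion for stochastic monotonicity: a Markov jump process with generator $L$ is stochastically monotone if and only if, for every up-set $U$ and every pair $\tilde x\leq\tilde y$ with $\mathbf{1}_U(\tilde x)=\mathbf{1}_U(\tilde y)$, one has $L\mathbf{1}_U(\tilde x)\leq L\mathbf{1}_U(\tilde y)$. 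Choose initial configurations $x,y$ on the full graph that agree off $\{i,j\}$ and carry the local values $a,b$ respectively on the pair $(i,j)$, so that $x\leq y$. In the main subcase $l=m'$, take $U=\{z\in\{0,1\}^G:z_l=1\}$: then $\mathbf{1}_U(x)=\mathbf{1}_U(y)$; all arrows not acting on the pair $(i,j)$ contribute identically to $L\mathbf{1}_U(x)$ and $L\mathbf{1}_U(y)$ because the two configurations agree off $(i,j)$; and the $(i,j,k)$-arrow contributes $\lambda^k_{ij}(f^k(a)_l-f^k(b)_l)=\lambda^k_{ij}>0$ to $L\mathbf{1}_U(x)-L\mathbf{1}_U(y)$. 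A Taylor expansion of $P_t\mathbf{1}_U$ in $t$ then yields $P_t\mathbf{1}_U(x)>P_t\mathbf{1}_U(y)$ for all sufficiently small $t>0$, contradicting stochastic monotonicity.

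The principal obstacle is the remaining subcase $l=m$, where the witness coordinate coincides with the coordinate in which $a$ and $b$ differ: here $\mathbf{1}_U(x)\neq\mathbf{1}_U(y)$ for the natural up-set $U=\{z:z_l=1\}$, and the infinitesimal test does not apply at this particular $U$. I would attempt to reduce this subcase to the previous one via a combinatorial case analysis over the four covering relations of $\{0,1\}^2$, exhibiting under the standing hypothesis on $f^k$ an adjacent atomic pair at which $f^k$ is non-monotone with witness coordinate distinct from the changing coordinate; alternatively, one may compute $P_t\mathbf{1}_U$ explicitly on the two-site subchain driven solely by $f^k$ and detect the non-monotonicity at a strictly positive time. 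Organising this case analysis so that every failure of $f^k$ is converted into a failure of stochastic monotonicity of $(X_t)$ is the most delicate step of the argument.
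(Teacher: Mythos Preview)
Your ``if'' direction matches the paper's: both use the pathwise coupling supplied by the graphical representation and induct over the successive arrival times. The paper compresses this to one sentence, but the content is the same.

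For the converse the paper takes a different, more direct route. Rather than working with the generator, it exploits the graphical representation itself: since the mechanisms are driven by \emph{independent} Poisson processes, with positive probability the first arrow in $[0,t]$ is precisely the $(i,j,k)$-arrow associated with the non-monotone $f^k$, and on that event the coupled pair $(X^x,X^y)$ (with $x\leq y$ chosen so that the local pair is $(a,b)$) exhibits an order violation at time $\tau$. This pathwise isolation of a single arrow is what the coupling buys you and what your generator computation loses: in your difference $L\mathbf{1}_U(x)-L\mathbf{1}_U(y)$ you must include \emph{all} mechanisms acting on the pair $\{i,j\}$ (both orientations, all $k'$), not just the bad one, and those extra contributions can cancel the term $\lambda^k_{ij}$ you isolated. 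So your claim that the difference equals $\lambda^k_{ij}>0$ is not justified as written. The first-arrow argument sidesteps this entirely.

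Your identification of the subcase $l=m$ is a genuine subtlety, and the paper's proof is terse enough that it does not visibly address it either. The paper's conclusion ``$\tilde X_\tau\leq X_\tau$, contradiction'' is a pathwise statement at a random time on a positive-probability event, and turning that into a violation of \emph{stochastic} monotonicity at a fixed time still requires choosing a suitable increasing test function---which runs into exactly the obstacle you describe when the witness coordinate coincides with the differing coordinate. Your proposed remedy (reduce via a case analysis on the four covering relations of $\{0,1\}^2$ to a violation with $l\neq m$, or compute $P_t\mathbf 1_U$ explicitly on the two-site chain) is the right direction; combined with the first-arrow isolation rather than the generator, it would give a complete argument.
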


\begin{proof}
If all basic mechanisms are monotone, then it is clear that the resulting process is monotone. For the converse direction, consider a monotone process $X$. Let $f$ be a non-monotone basic mechansim used in the construction of $X.$ Let $\tau>0$ be the time of the first arrow in the graphical representation.  Assume that $f$ is the first type of transition to happen in the graphical representation, this happens with positive probability. We couple two versions of $X,\tilde{X}$ where we can choose initial conditions $X_0=x,\tilde{X}_0=y\in E, x\leq y,$ such that $\tilde{X}_\tau\leq X_\tau.$ This is a contradiction to the monotonicity of $X.$
\end{proof}

\noemi{
Consequently, processes that are derived as in \ref{prop:funct-duality} via a monotone function are also monotone.}
Conservation of monotonicity is preserved under rescaling of the duality, hence moment dualities obtained by rescaling a coalescing duality (section \ref{sec:pathwise_rescaled}) also preserves monotonicity.

\begin{cor}\label{cor:monotone_moment}
Let $(p_t),(n_t)$ be coalescing moment duals. Then they are both monotone.
\end{cor}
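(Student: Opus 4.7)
The plan is to propagate monotonicity from the approximating $q=0$ dual interacting particle systems up through the scaling limit. By Definition \ref{def:q-moment}, there exist coalescing-dual spin systems $(X_t^N)$, $(Y_t^N)$ on $\{0,1\}^N$ whose suitably rescaled empirical quantities converge weakly to $(p_t)$ and $(n_t)$, respectively. Corollary \ref{cor:monotonicity_coal_dual}---or, equivalently, the observation that the resampling and coalescence basic mechanisms underlying the graphical representation are monotone---guarantees that both $(X_t^N)$ and $(Y_t^N)$ are monotone as spin systems with respect to the componentwise order on $\{0,1\}^N$.

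First I would upgrade this spinwise monotonicity to a stochastic monotonicity of the scalar functionals that actually enter the limit. Fix $N$ and integers $k\leq k'$, and draw nested uniformly random sets $A\subset A'\subset\{1,\dots,N\}$ with $|A|=k$ and $|A'|=k'$. Running the common graphical representation simultaneously from $A$ and $A'$, the componentwise order is preserved for all $t\geq 0$, so that $X_t^N(A)\leq X_t^N(A')$ and $Y_t^N(A)\subset Y_t^N(A')$ almost surely. Hence, with exchangeable initial laws, $|X_t^N|$ is stochastically monotone in $|X_0^N|$ and $|Y_t^N|$ in $|Y_0^N|$; since the rescaling in Definition \ref{def:q-moment} applies the same affine map at time $0$ and at time $t$, this stochastic monotonicity is transferred to the rescaled process on its prelimit state space (a subset of $\R$ for $X$, a subset of $\N$ for $Y$) endowed with the usual order.

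Finally, I would pass to the weak limit, using that stochastic monotonicity of one-dimensional marginals is preserved under weak convergence. Given $p\leq p'$ in the state space of $(p_t)$, approximate them by ordered sequences $p^N\leq p'^N$ in the state space of the $N$th rescaled process; this is feasible because these state spaces are increasing arithmetic sequences in $\R$, and hence dense enough for ordered approximations. The prelimit inequality $\E f(p_t^N)\leq \E f(p_t'^N)$ for every bounded increasing continuous test function $f$ then passes to the limit by weak convergence, yielding $p_t\leq_{\mathrm{st}} p_t'$. The argument for $(n_t)$ is identical, with $f$ ranging over bounded increasing functions on $\N$. The principal technical subtlety is the simultaneous matching of arbitrary ordered limit targets $p\leq p'$ by ordered approximations $p^N\leq p'^N$ at each $N$, so that both order and convergence hold; once this bookkeeping is in place, the preservation of stochastic order under weak convergence is standard and no further use of the duality structure is required.
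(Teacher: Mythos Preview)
Your proposal is correct and follows essentially the same approach as the paper: monotonicity of the approximating coalescing-dual spin systems (via Corollary~\ref{cor:monotonicity_coal_dual}) is passed to the scalar counts and then through the weak limit. You are in fact more careful than the paper on one point: the paper simply asserts that monotonicity of $(X_t^N)$ implies monotonicity of $(|X_t^N|)$ (respectively $|1-X_t^N|$), whereas you explicitly justify this step by coupling from nested initial sets in the common graphical representation and invoking exchangeability so that only the cardinality of the initial set matters. The paper's limit step is the same as yours, written concretely as $\P_x(p_t\geq z)=\lim_N \P_{\lfloor Nx\rfloor}(N-|X_t^N|\geq \lfloor Nz\rfloor)$, which is precisely the ordered-approximation bookkeeping you flag as the main technical subtlety.
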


\begin{proof}
By assumption, there are coalescing dual processes $(X^N_t), (Y^N_t)$ on $\{0,1\}^N$ such that $(p_t)$ is the weak limit of $\left(\frac{N-|X^N_t|}{N}\right)$ and $n_t$ is the weak limit of $(|Y^N_t|).$ $(X^N_t)$ and $(Y^N_t)$ are monotone by corollary \ref{cor:monotonicity_coal_dual}. Therefore, $(|Y^N_t|)$ and $(N-|X_t^N|)=(|1-X_t^N|)$ are also monotone. We get, for $x\leq y, x,y\in \R,$
$$\P_x(p_t\geq z)=\lim_{N\to\infty}\P_{\lfloor Nx\rfloor}(N-|X_t^N|\geq\lfloor NZ\rfloor)\leq \lim_{N\to\infty}\P_{\lfloor Ny\rfloor}(N-|X_t^N|\geq\lfloor NZ\rfloor)=\P_y(p_t\geq z).$$
Hence $(p_t)$ is monotone, and similarly for $(n_t).$
\end{proof}

Other moment dualities need not imply  monotonicity. An example of a moment duality of non-monotone processes is given by rescaling the branching annihilating process from Appendix \ref{app:mechanisms} and its dual, leading to a Wright-Fisher diffusion with a certain type of balancing selection, cf. \cite{JansenKurt}.

\section{Symmetries and intertwining} \label{sec:hilbert} 

This section complements the functional analytic theory from Section \ref{sec:fa} and explains the relations of duality with the notions of intertwining of Markov processes \cite{DiaconisFill,carmona-petit-yor98,huillet-martinez11, swart13}, symmetries, and methods borrowed from quantum mechanics. Although most of the material is known, there are some new aspects: the main point of Section~\ref{sec:intertwining} is that some of the quantum symmetries of the physics literature can be given a stochastic interpretation with the help of an argument from \cite{huillet-martinez11}. Section~\ref{sec:quantum} carves out the rigged Hilbert space / Gelfand triple structure (see Eq.~\eqref{eq:gelfand}) of the representations of creation and annihilation operators used for example in \cite{doi76,sandow-schuetz94,RedigEtAl09}, noting that birth and death are time reversals with respect to natural reference measures.

\subsection{Intertwining of Markov processes} \label{sec:intertwining}

Let $P$ and $Q$ be stochastic matrices in finite state spaces that are dual with non-degenerate duality function $H$, so that $PH = HQ^\trans$ with invertible matrix $H$. Then there is a bijection between the set of duality functions for $P$ and $Q$ and the matrices commuting with $P$  (\emph{symmetries}): if $SP = PS$, then $SH$ is a duality function for $P$; conversely, if $\tilde H$ is a duality function, then $S:= \tilde H H^{-1}$ commutes with $P$. An interesting special case is obtained if $H = \mathrm{diag} (1/\pi(x))$ for some probability measure $\pi$ with $\pi(x)>0$: in this case every other duality function is of the form $\tilde H(x,y)= S(x,y)/\pi(y)$ for some matrix $S$ commuting with $P$. Relations of this type have been studied in \cite{Moehle99,RedigEtAl09}. In both articles, the symmetries enter in an algebraic way and need not have a stochastic interpretation.

In contrast, the notion of \emph{intertwining} starts from a stochastic matrix $\Lambda$, sometimes referred to as a \emph{link} between two Markov processes $(P_t)$ and $(Q_t)$. 
The relation of intertwining and duality has been studied by Carmona, Petit, and Yor \cite[Section 5.1]{carmona-petit-yor98} and by Diaconis and Fill \cite[Section 5]{DiaconisFill}: if a stochastic matrix $\Lambda$ intertwines $(P_t)$ and $(Q_t)$, i.e, $P_t \Lambda = \Lambda Q_t$ for all $t$, and $(\tilde Q_t)$ is the time reversal of $Q_t$ with respect to a measure $\pi$ such that $\pi(x)>0$ for all $x$, then $H(x,y) := \Lambda(x,y)/\pi(y)$  is a duality function for $(P_t)$ and $(\tilde Q_t)$. At first sight, it looks like the restriction that $\Lambda$ is a stochastic matrix excludes many dualities.  Huillet and Martinez \cite{huillet-martinez11}, however, have given a general argument {proving} that for irreducible Markov chains, in finite state spaces, \emph{all} duality functions are associated with stochastic matrices. 

Quickly summarized, the argument is as follows. Let $P$ and $Q$ be irreducible stochastic matrices. Let $\pi$ be the stationary distribution of $Q$. By adding to $H$ multiples of the matrix with all entries equal to $1$, we can ensure that $H$ has non-negative entries only, and that the function $h:=H\pi$ is strictly positive. Furthermore, because $H$ is a duality function and $\pi$ is invariant, $h$ is harmonic for $P$, i.e., $Ph =h$. If $P$ is irreducible, $h$ must be constant and, up to a constant factor, the matrix $\Lambda(x,y):= H(x,y) \pi(y)$ is stochastic and intertwines $P$ and $\tilde Q$, the time reversal of $Q$ with respect to $\pi$. If $P$ is not irreducible, $\Lambda(x,y):= h(x)^{-1} H(x,y)\pi(y)$ intertwines the Doob $h$-transform $P_h$ with the time reversal $\tilde Q$, see also \cite[Section 5]{DiaconisFill}. 
 
The previous argument can be applied to many dualities of interacting particles with invariant measures, showing that some of the ``algebraic'' or ``quantum'' symmetries of \cite{sandow-schuetz94,RedigEtAl09} can be interpreted in terms of commuting Markov chains. 
We refrain from an abstract description and content ourselves with the following example. 

\begin{example}[Symmetric simple exclusion process] 
	Let $M \in \N$, $E= \{0,1\}^M\equiv \mathcal{P}(\{1,\ldots,M\})$, and $(P_t)$ the symmetric simple exclusion process (a particle at site $k\in \{1,\ldots,M\}$ jumps to a neighboring site $k\pm 1$ at rate $1$, provided the site is unoccupied and in $\{1,\ldots,M\}$). The process is self-dual with duality function $H(A,B) = \mathbf{1}_{\{A \subset B\}}$. It has many reversible measures, for example, $\pi = \otimes_1^M \mathrm{Ber}(1/2)$, a product of Bernoulli measures with parameter $1/2$. The harmonic function $h:= H\pi$ is given by 
\begin{equation*}
		h(A) = \frac{1}{2^{M}} \bigl| \{ B \subset \{1,\ldots,M\}\mid A \subset B\} \bigr|
				= \frac{1}{2^{|A|}}. 
\end{equation*}
and the stochastic matrix $\Lambda$ associated with $H$ and $\pi$ is given by 
\begin{equation*}
	\Lambda(\vect{\eta},\vect{\eta'}) = \prod_{k=1}^M q(\eta_k,\eta'_k),\quad 
	\begin{pmatrix} q(0,0) & q(0,1) \\ q(1,0) & q(1,1) \end{pmatrix} 
	= \begin{pmatrix} 
				1/2 & 1/2 \\
				0 & 1
		\end{pmatrix}. 
\end{equation*}
$\Lambda$ describes a chain where at each unoccupied site, a particle is born with probability $1/2$.  
Since $(P_t)$ has no transitions between configurations with differing particle numbers, 
the Doob $h$-transform $P_t ^h(A,B) = h(A)^{-1} P_t(A,B) h(B)$ is equal to $P_t$. Thus we obtain, in the end, that $P_t \Lambda = \Lambda P_t$: the symmetric simple exclusion process commutes with the birth mechanism described by $\Lambda$. This provides a stochastic interpretation of the ``quantum'' symmetry $\exp( \sum_k S_k^+)$ (see p.~\pageref{eq:spin-chain} below for an explanation of the notation) studied in \cite{sandow-schuetz94,RedigEtAl09}.
\end{example}

\subsection{Quantum many-body representations of interacting particle systems} \label{sec:quantum}

There is a close relationship between problems from quantum mechanics and probability, which helps the understanding of duality \cite{sandow-schuetz94, SudburyLloyd95, RedigEtAl09}.
Let $\Hi$ be a separable Hilbert space, for example $L^2(E,\mu)$, a space of complex-valued square-integrable functions.  We write $\la \cdot, \cdot\ra$ for the scalar product, linear in the second entry and conjugate linear in the first entry.
The quantum mechanics analogue of contraction semi-groups $(P_t)_{t\geq 0}$ are one-parameter unitary groups $(U_t)_{t \in \R}$ in $\Hi$. Stone's theorem \cite{reed-simon} says that every such $(U_t)$ can be written as $U_t = \exp( -\mathrm{i} t H)$, for a unique self-adjoint operator $H$. The \emph{Hamilton operator} $H$ and Stone's theorem take the place of the infinitesimal generator and the Hille-Yosida theorem in probability. 
Some quantum mechanical Hamiltonians admit stochastic representations, by which we mean the following: 

\begin{definition} \label{def:stochrep}
	Let $H$ be a self-adjoint operator in a Hilbert space $\Hi$ and $(P_t)$ a Markov semi-group with symmetrizing $\sigma$-finite measure $\mu$ in a Polish space $E$; we assume that $\mu(\mathcal{O})>0$ for every non-empty open set $\mathcal{O}\subset E$. We say that $(U,E,\mu,(P_t))$ is a stochastic representation of $H$ if $U: \Hi\to L^2(E,\mu)$ is a unitary operator and 
	\begin{equation*} 
		\forall t\geq 0:\quad \exp(-t H) = U^* P_t U. 
	\end{equation*} 
\end{definition}

%
\begin{example}[Ornstein-Uhlenbeck process and harmonic oscillator]
	Let $\Hi = L^2(\R,\dd x)$ ($\dd x$ is Lebesgue measure) and 
	\begin{equation*} 
		H= - \frac{1}{2} \frac{\dd^2}{\dd x^2} + \frac{1}{2} x^2 - \frac{1}{2},
	\end{equation*} 
	the so-called \emph{harmonic oscillator}. $H$ with a suitable definition is self-adjoint.  As is well-known, it has a stochastic representation given by the \emph{Ornstein-Uhlenbeck process}:  Let $\varphi(x):= \pi^{-1/4} \exp(- x^2/2)$ and define $U: L^2(\R,\dd x) \to L^2(\R, \varphi^2 \dd x)$ by $(Uf)(x) := f(x)/\varphi(x)$. Then 
	\begin{equation*} 
		UHU^* = - \frac{1}{2} \frac{\dd^2}{\dd x^2} + x \frac{\dd}{\dd x} = - L,
	\end{equation*}
	with $L$ the infinitesimal generator of the Ornstein-Uhlenbeck process.
\end{example}

\begin{example}[Simple symmetric exclusion process and quantum spin chain \cite{gwa-spohn92,sandow-schuetz94,lloyd-sudbury-donnelly96,RedigEtAl09}] 
	Fix $M\in\N$ and let $\Hi = \C^2 \otimes \cdots \otimes \C^2$ ($M$ times) with the standard scalar product. Consider the Hermitian $2\times 2$-matrices $S^x,S^y,S^z$ determined by 
	\begin{equation} \label{eq:spin1/2}
	S^+ = S^x + \mathrm{i} S^y := \begin{pmatrix} 0& 1 \\ 0 & 0 \end{pmatrix},\ 
			S^- = S^x - \mathrm{i} S^y := \begin{pmatrix} 0& 0 \\ 1 & 0 \end{pmatrix},\ 
		S^z := \begin{pmatrix} 1/2 & 0 \\ 0 & -1/2 \end{pmatrix}.
\end{equation} 
	$S^\pm$ correspond to birth and death of a particle, and $S^+ + S^-$ corresponds to spin flip. 
	Let $S_k^\alpha$ be the operator $S^\alpha$ acting on the $k$-coordinate, e.g., 
	$S_1^x (e_1\otimes \cdots \otimes e_M)= (S^x e_1) \otimes e_2 \otimes \cdots \otimes e_M$. Let 
	\begin{equation}  \label{eq:spin-chain}
		H = -  \sum_{k=1}^{M-1} \Bigl( S_k^x S_{k+1}^x + S_k^y S_{k+1}^y+ S_k^z S_{k+1}^z   - \frac{1}{4}\Bigr) 
		   =  - \sum_{k=1}^{M-1} \Bigl( S_k^+ S_{k+1}^- + S_k^- S_{k+1}^+ + S_k^z S_{k+1}^z   - \frac{1}{4}\Bigr), 
	\end{equation} 
	the Hamiltonian for an isotropic spin $1/2$-quantum spin chain. Then for every $t>0$, the matrix $\exp(- t H)$ in the canonical basis of $\Hi$ is doubly stochastic; $-H$ is the infinitesimal generator of the simple symmetric exclusion process. The relationship is precisely of the form given in Definition~\ref{def:stochrep}, with underlying state space $E= {\{0,1\}}^M$ and $\mu$ the counting measure (or, if we normalize and change the scalar product in $\Hi$ by a multiplicative constant, the uniform distribution on $E$). 
\end{example}

\begin{remark} The previous example is closely related to the topic of stochastic representations of quantum spin chains, going back to T{\'o}th \cite{toth93} and Aizenman and Nachtergaele \cite{aizenman-nachtergaele94}, see also \cite{goldtschmidt-ueltschi-windridge11} and the references therein. In this context the emphasis is usually put on stochastic geometric aspects that arise in a way similar to percolation problems in graphical representations of interacting particle systems \cite{Harris, Griffeath, Liggett05}.   
\end{remark}


Theorem~\ref{thm:unitary} just says that two dual reversible Markov processes give rise to unitarily equivalent Hamiltonians, $H_2 = UH_1 U^*$; put differently, if a Hamiltonian has two different stochastic representations, then the representing Markov processes are dual with non-degenerate duality function, provided some regularity conditions.\footnote{In particular,  if $(U,E,\mu,(P_t))$ and $(V,F,\nu,(Q_t))$ are the two reperesentations, we need to assume that $VU^*: L^2(E,\mu) \to L^2(F,\nu)$ is of the form $(VU^*f)(x) =\int D(x,y) f(y) \mu(\dd y)$ for some function $D$, which will become the duality function.} Self-dualities correspond to unitaries $U$ such that $UHU^* = H$ -- i.e., \emph{symmetries} of the Hamiltonian. The quantum mechanist's ansatz for finding dualities is, therefore, to look for symmetries and alternative representations of a Hamiltonian.
To this aim it is convenient to reinterpret the infinitesimal generators of stochastic processes as Hamiltonians and, for interacting particle systems, to rewrite the operator 
using notation of quantum-many body theory, notably creation and annihilation operators. In probabilistic terms, this means that we break down transitions into birth and death of particles; for example, in Eq.~\eqref{eq:spin-chain}, the hopping of a particle from site $k$ to site $k+1$ is written as death at site $k$ followed by birth at site $k+1$. 

We refer the reader to \cite{RedigEtAl09} for a systematic treatment of the method for finding dualities; the remainder of this section clarifies the representation theoretic structure underpinning the method. 

First, let us stress that the quantum mechanics translation can be useful in a broader context than in Definition~\ref{def:stochrep}, allowing for non-reversible Markov processes and ``Hamiltonians'' that are not necessarily self-adjoint -- for example, a non-symmetric simple exclusion process leads to a non-self adjoint Hamiltonian \cite{gwa-spohn92}. In fact, we might wish to study processes before even knowing whether they have reversible or invariant measures. For this reason it is useful to work with an a priori reference measure (e.g., Poisson or Bernoulli) and to make explicit that we work in three different spaces: bounded measurable functions, signed measures, and a Hilbert space. 

Thus let $E$ be a Polish space and $\mu_0$ a reference probability measure on $E$. Let $\Hi:= L^2(E,\mu_0)$. 
We have the following embeddings: 
\begin{equation}  \label{eq:gelfand}
	L^\infty(E) \stackrel{\iota}{\rightarrow} \Hi \stackrel{\iota^*}{\rightarrow} \meas(E) 
\end{equation}
given by $\iota f:= f$  and $\iota^* f:=f \mu_0$. The notation $\iota^*$ is justified by the identity $\la \iota f,\varphi\ra_{\Hi} = \la f, \iota^*\varphi\ra_{L^\infty\times \meas}$, where $\la f,\mu\ra_{L^\infty \times \meas}: = \int_E f\dd \mu$.  A bounded operator $A$ in $\Hi$ acts on measures that have an $L^2$-density with respect to $\mu_0$ and on functions via 
\begin{equation*} 
	\ell(A) (f\mu_0) := (A f) \mu_0,\quad r(A) f:= A^* f. 
\end{equation*} 
The following identity holds: 
\begin{equation*} 
	\la r(A) f, \mu \ra_{L^\infty \times \meas} = \la f,A \frac{\dd \mu}{\dd \mu_0} \ra_{\Hi} = \la f, \ell(A) \mu \ra_{L^\infty \times \meas}.
\end{equation*} 
When  $E$ is finite and $\mu_0$ is the counting measure, we may think of functions as row vectors, measures as column vectors (note the unfortunate inversion with respect to the usual probabilist's conventions), and operators as matrices; then $\ell(A) \mu = A\mu$ and $r(A) f = f A$, i.e., $\ell(A)$ and $r(A)$ correspond to multiplication from the left and multiplication from the right, respectively. 

If $(P_t)$ is a Markov process with infinitesimal generator $L$, we may associate an operator $H$ in $\Hi$ by asking that $P_t^* (f\mu_0) = (\exp(-t H) f) \mu_0$; thus formally, $\ell(H) = - L^*$. In practical applications, the distinction between an operator in Hilbert space and its representation in the spaces of measures or functions is often suppressed, and   we find the relation $H= - L^*$, compare also \cite[Eq. (74)]{RedigEtAl09}.

For interacting particle systems, there is a standard dictionary towards quantum many-body processes. For example, exclusion processes (at most one particle per lattice site) can be expressed in terms of spin $1/2$-operators as in Eq.~\eqref{eq:spin-chain}. The other two cases (partial exclusion and unbounded number of particles per site) are explained in the next two examples. The point we wish to stress is that there are natural reference measures, turning birth and death into time reversals or, on a quantum mechanical level, ensuring that creation and annihilation are Hilbert space adjoint operators in some Hilbert space. This fact, to the best of our knowledge, has not been highlighted before. 

\begin{example}[Partial exclusion and spin $m/2$-chains] 
	 Fix $m\in \N$ and let $E:=\{0,1,\ldots,m\}$; think of $E$ as the single-site state space for an interacting particle system with at most $m$ particles per lattice site. Let
	$\mu_0(\eta) := 1/(2^m \binom{m}{\eta})$ be the binomial measure with parameter $1/2$,  and $\Hi = L^2(E,\mu_0)$. We specify $J^\pm$  and $J^z$ via their action on measures,
	\begin{equation*} 
		\ell(J^+) \delta_\eta := (m-\eta) \delta_{\eta+1},\quad 
		\ell(J^-) \delta_\eta:= \eta \delta_{\eta-1},\quad 
		\ell(J^z) \delta_\eta:= (\eta - \frac{m}{2}) \delta_\eta,
	\end{equation*}
$\eta=0,1,\ldots,m$, see \cite[Eq. (21)]{RedigEtAl09}. For $m=1$ (\emph{simple exclusion}), we recover the $2\times 2$ matrices from Eq.~\eqref{eq:spin-chain}.
The simple stochastic interpretation is the following (see the ladder graph interpretation in \cite[Section 4]{RedigEtAl09}): think of $\eta\in E$ as a configuration of $\eta$ filled boxes out of $m$ available boxes. Then $J^+$ corresponds to 
	birth of a particle in one of the $m-\eta$ empty boxes, and $J^-$ corresponds to death of one of the $\eta$ particles. The reader may check that $(J^+)^* = J_-$, i.e., the birth and death mechanisms are time reversals of each other with respect to the binomial reference measure; this is where the choice of the reference measure enters. 
\end{example} 

\begin{example}[Doi-Peliti formalism for systems with unbounded occupation numbers] 
	 Let $\Lambda$ be a discrete set and $E= \N_0^\Lambda$. For $k \in \Lambda$, let $\vect{e_k}$ be the vector in $E$ with all entries vanishing except for the $k$'th which i equal to $1$. Define the action of creation and annihilation operators on measures as 
	 \begin{equation*} 
	 	\ell(c_k^\dagger) \delta_{\vect{n}} = \delta_{\vect{n} + \vect{e_k}},\quad \ell(c_k) \delta_{\vect{n}} = n_k \delta_{\vect{n} - \vect{e_k}}.
	 \end{equation*} 
	 The action on functions is 
	 \begin{equation*} 
	 	\bigl(r(c_k^\dagger) f \bigr) ( \vect{n}) = f(\vect{n} + \vect{e_k}),\quad 
	 	\bigl(r(c_k) f \bigr) ( \vect{n}) = n_k f(\vect{n} - \vect{e_k}). 
	 \end{equation*} 
	 $c_k^\dagger$ is creation (birth) of a particle, and $c_k$ is annihilation (death) of one of the $n_k$ particles at site $k$. 
	 This representation is often used by physicists in order to reformulate stochastic problems as a model of bosons, and is part of the so-called \emph{Doi-Peliti formalism} \cite{doi76,peliti85}. The underlying Hilbert space is, to the best of our knowledge, in general not specified. A natural choice is $\Hi = L^2(E,\mu_0)$ with $\mu_0 = \otimes_{k\in \Lambda} \mathrm{Poiss}(1)$ the tensor product of Poisson measures with parameter $1$. This reference measure has the crucial property that the birth and death mechanisms are time reversals of each other with respect to $\mu_0$, so that the underlying 
operators $c_k$ and $c_k^\dagger$ are adjoints in $\Hi$, $c_k^\dagger = c_k^*$.	 
\end{example}

\begin{remark}
	An  alternative to the bosonic creation and annihilation operators for unbounded occupation numbers is the use of operators connected to representations of $\mathrm{SU}(1,1)$ \cite{RedigEtAl09}. 
\end{remark}

We conclude this section with a remark on group representations; the remark is not of direct relevance for the probabilistic setting, but might be of interest from an algebraic point of view. Groups enter in two ways: first, in relation to self-dualities, the set of unitaries $U$ commuting with a given Hamiltonian $H$ form a group, the \emph{symmetry group}. Second, many of the basis operators considered above are connected to the Lie algebras of Lie groups, and choices of alternative representations of the Hamiltonian are closely related to alternative representations of the Lie group; see \cite{kirillov08} for relevant background on Lie groups. In quantum mechanics, one is especially  interested in unitary representations. Therefore we note that our functional analytic setup for interacting particles typically yields \emph{three} representations, a (unitary, left) representation in the Hilbert space, a (left) representation on measures, and a right representation on functions. ``Right'' refers to the product rule $r(AB) = r(B) r(A)$. The unitarity of the representation depends on the choice of the reference measure $\mu_0$, mirroring the time reversal relations mentioned earlier.

\appendix

\section{Non-degeneracy of some standard duality functions} \label{app:standard}

Here we prove Lemma~\ref{lem:non-degenerate}. Remember that $B_H$ is non-degenerate if the left and right null spaces are trivial; equivalently, if the integral transforms $\nu \mapsto \int_F H(\cdot,y)\nu(\dd y)$ and $\mu \mapsto \int_E H(x,\cdot) \mu(\dd x)$ are injective. 

\begin{proof}[Siegmund duality.]
	Let $H:\R\times \R\to \R$ be given by 
	$H(x,y):= 1$ if $x\leq y$ and $H(x,y):=0$ otherwise. Suppose that a signed Borel measure on $\R$ belongs to the left null space of the bilinear form $B_H$. Then for all $y\in \R$, 
	\begin{equation*}
		0 = B_H(\mu,\delta_y) = \int \mu(\dd x) H(x,y) = \mu\bigl( (-\infty,y] \bigr), 
	\end{equation*}
It follows that $\mu \equiv 0$. Thus the left null space of $B_H$ is equal to $\{0\}$. The right null space can be treated in a similar way.  
\end{proof}

\begin{proof}[Moment duality]
	Let $H:[0,1]\times \N_0 \to \R$, $H(x,n):= x^n$. Let $\mu\in \meas(\R)$ be in the left null space of $B_H$. Then $B_H(\mu,\delta_n) = \int_0^1 x^n \mu(\dd x) =0$ for all $n\in \N_0$. Write $\mu = \lambda_+ \mu_+ - \lambda_- \mu_-$ for the Hahn-Jordan decomposition, with $\mu_+$ and $\mu_-$ probability measures on $\R$. We have $\mu([0,1]) = \int_0^1 x^0 \mu(\dd x) =0$, hence $\lambda_+ = \lambda_- = :\lambda$. If $\lambda =0$, then obviously $\mu =0$. If $\lambda \neq 0$, we have two probability measures $\mu_{\pm}$ with all their moments identical. 
Since a probability measure on a bounded interval is uniquely defined by its moments~\cite[Section VII.3]{feller2}, it follows that $\mu =0$. 

	Now let $(a_n)_{n\in \N_0} \in \meas(\N_0) = \ell^1(\N_0)$ be in the right null space. Then for every $x\in [0,1]$, 
	\begin{equation*}
		0 = B_H\bigl(\delta_x, (a_n)\bigr) = \sum_{n=0}^\infty a_n x^n, 
	\end{equation*} 
	which implies $a_n=0,\forall  n\in\N_0.$ 
\end{proof} 

\begin{proof}[Laplace duality] 
	Let $H:[0,\infty) \times [0,\infty)\to \R$ be given by $H(x,\lambda):= \exp(-\lambda x)$.  
	Let $\mu \in \meas(\R)$ be in the left null space of $B_H$ and $g(\lambda):= \int_0^\infty \exp(-\lambda x) \mu(\dd x)$ its Laplace transform. Then for all $\nu \in \meas(\R)$, 
	\begin{equation*}
		B_H(\mu,\nu) = \int_0^\infty g(\lambda) \nu(\dd \lambda) = 0,
	\end{equation*}
	and therefore the Laplace transform $g$ of $\mu$ is $0$. It follows~\cite[Section XIII]{feller2} that $\mu =0$. 
	The right null space can be treated in a similar way. 
\end{proof} 

\begin{proof}[Coalescing dual] 
	Let $\Lambda$ be a finite set, $E= F= \mathcal{P}(\Lambda)$ (the collection of subsets of $\Lambda$)  and 	
	\begin{equation*}
		H(A,B)=
		\begin{cases}
	 		1, &\quad A\cap B = \emptyset,\\
			0, &\quad A\cap B \neq  \emptyset. 
		\end{cases}
	\end{equation*} 
	As observed in Sudbury-Lloyd \cite{SudburyLloyd95}, $H(A,B)$ is best understood by exploiting a tensor product structure. To this aim we identify $\mathcal{P}(\Lambda)$ with $\{0,1\}^\Lambda$ by associating with each set $A\subset \Lambda$ its sequence of occupation numbers $(n_A(x))_{x\in\Lambda}\in \{0,1\}^\Lambda$. Next, we identify $\R^{\mathcal{P}(\Lambda)}$ with $\otimes_{x\in \Lambda} \R^2$ via 
$$\delta_A \equiv\bigotimes_{x\in\Lambda} e_{n_A(x)},\quad e_0:= \begin{pmatrix} 1 \\ 0 \end{pmatrix}, \ 
e_1:= \begin{pmatrix} 0 \\ 1 \end{pmatrix}. $$ 	
Set \begin{equation*}
		h = \begin{pmatrix} 
				1 & 1 \\
				1 & 0 
			\end{pmatrix}  = (h_{ij})_{i,j=0,1}. 
	\end{equation*}
	Then 
	\begin{equation*}
		H(A,B) = \prod_{x\in \Lambda} \chi(A \cap B \cap \{x\} = \emptyset ) = 
			\prod_{x\in \Lambda} h_{n_A(x),n_B(x)}
	\end{equation*}
	whence $H = \otimes_{x\in \Lambda} h$. 
	Since $h$ is invertible, $H$ is invertible too, with inverse $\otimes_{x\in \Lambda} (h^{-1})$, and 
	$B_H$ is non-degenerate. 
\end{proof} 

\begin{proof}[Annihilating dual] 
	Let $\Lambda$ be a finite set, $E= F= \mathcal{P}(\Lambda)$ (the collection of subsets of $\Lambda$)  and 	
	\begin{equation*}
		H(A,B)= (-1)^{|A\cap B|}.
	\end{equation*}
	Just as for the coalescing dual, following~\cite{SudburyLloyd95}, we identify $\R^{\mathcal{P}(\Lambda)}$ with a tensor product space and $H \equiv \otimes_{x\in \Lambda} h$, where this time 
	\begin{equation*}
		h = \begin{pmatrix} 
			1 & 1 \\ 
			1 & - 1
			\end{pmatrix}. 
	\end{equation*}
	Since $h$ is invertible, $H$ is invertible too and $B_H$ is non-degenerate.  
\end{proof} 

\section{Some dual mechanisms}\label{app:mechanisms}
We give a list of some dual basic mechanisms for interacting particle systems, and an example for the application of Proposition \ref{thm:prototype_general}. \noemi{For more details see \cite{JansenKurt}.}
\begin{center}
\begin{tabular}{|c||c|c|c|c||l|}
\hline 
 & $f(0,0)$ & $f(0,1) $& $f(1,0)$ & $f(1,1)$ &   \\ 
\hline 
\hline
$f^R$ &(0,0) & (0,0) & (1,1) & (1,1) &resampling \\ 
\hline 
$f^C$ &(0,0) & (0,1) & (0,1) & (0,1) & walk-coalescence\\
\hline 
$f^A$ &(0,0) &(0,1) &(0,1)&(0,0)& walk-annihilation\\
\hline
$f^D$&(0,0) & (0,0) & (0,1) & (0,1) & death\\ 
\hline 
$f^{BC}$&(0,0)&(0,1)&(1,1)&(1,1)& branching-coalescence\\
\hline
$f^{BA} $& (0,0) & (0,1) & (1,1) & (1,0) & branching-annihilation \\ 
\hline 
\end{tabular}
\end{center}


\begin{lemma}\label{lem:ann_dual}\begin{itemize}
\item[(a)] Two basic mechanisms $f, g$ are annihilating dual mechanisms if and only if
$$|x\wedge (g(y^\dagger))^\dagger|\mbox{ is odd } \Leftrightarrow |f(x)\wedge y|\mbox{ is odd.}$$
\item[(b)] With the notation of the above table, we have the following:
\begin{itemize}
\item[(i)] $f^R$ and $f^A$ are annihilating duals
\item[(ii)] $f^D$ is an annihilating self-dual
\item[(iii)] $f^{BA}$ is an annihilating self-dual.
\end{itemize}
\end{itemize}
\end{lemma}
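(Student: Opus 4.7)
Part (a) is immediate from the definition \eqref{eq:dual_mechanism} of $q$-dual mechanisms specialized to $q = -1$. For non-negative integers $a, b$ one has $(-1)^a = (-1)^b$ if and only if $a \equiv b \pmod 2$, so the identity $(-1)^{|x \wedge (g(y^\dagger))^\dagger|} = (-1)^{|f(x) \wedge y|}$ collapses at once to the claimed equivalence of parities. This is the only place where the structure of the duality function as a function of the inner product enters.

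For part (b), I would recast the criterion from (a) in $\mathbb{F}_2$-linear-algebraic language. Introduce the $\mathbb{F}_2$-bilinear form $\la u, v \ra := u_1 v_1 + u_2 v_2 \pmod 2$ on $\{0,1\}^2$, which equals $|u \wedge v| \bmod 2$, and the swap $\tau(u_1, u_2) := (u_2, u_1)$, so that $u^\dagger = \tau(u)$. A direct check shows $\tau$ preserves $\la \cdot, \cdot \ra$ and in particular is self-adjoint. The parity condition of (a) then reads
\begin{equation*}
    \la x, \tau g \tau(y) \ra \equiv \la f(x), y \ra \pmod 2 \qquad \forall\, x, y \in \{0,1\}^2.
\end{equation*}

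The next key step is the observation that each of the four mechanisms in question is $\mathbb{F}_2$-linear: reading off from the table one finds, modulo $2$,
\begin{equation*}
    f^R(x) = (x_1, x_1), \quad f^A(x) = (0, x_1 + x_2), \quad f^D(x) = (0, x_1), \quad f^{BA}(x) = (x_1, x_1 + x_2).
\end{equation*}
With these explicit formulas, each of (i), (ii), (iii) reduces to comparing two quadratic forms in $x_1, x_2, y_1, y_2$ over $\mathbb{F}_2$, which is settled by inspection in one or two lines. For example, for (ii) both sides evaluate to $x_1 y_2$; for (iii) both sides evaluate to $x_1 y_1 + x_1 y_2 + x_2 y_2$; for (i) both sides evaluate to $x_1 y_1 + x_1 y_2$.

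The main obstacle is not mathematical but notational: one has to apply the daggers on $y$ and on $g(y^\dagger)$ in the correct order and keep the two slots of the basic mechanisms straight. A brute-force alternative that avoids the linearity observation is to verify the parity condition on all $16$ pairs $(x, y) \in \{0,1\}^2 \times \{0,1\}^2$ for each of the three claims; this reduces to $9$ nontrivial pairs because every mechanism in the table satisfies $f(0, 0) = (0, 0)$, so both sides of the parity condition vanish whenever $x$ or $y$ is $(0,0)$.
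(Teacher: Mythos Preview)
Your proof is correct. Part (a) is indeed immediate from the definition with $q=-1$, and for part (b) your observation that all four mechanisms are $\mathbb{F}_2$-linear reduces the verification to a one-line comparison of bilinear forms, which you carry out correctly in each case (I checked all three computations).

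There is nothing to compare against in the paper itself: the paper does not give its own proof of this lemma but simply cites \cite{JansenKurt}. Your $\mathbb{F}_2$-linear reformulation is a clean way to organise what would otherwise be a case-by-case check; it explains \emph{why} the annihilating duality works for these particular mechanisms (they are precisely the $\mathbb{F}_2$-linear ones among the listed mechanisms, and the parity condition becomes the statement that $\tau g \tau$ is the $\mathbb{F}_2$-adjoint of $f$). The brute-force alternative you mention at the end is presumably closer in spirit to what a direct verification would do, but your linear-algebraic route is both shorter and more illuminating.
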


For a proof see \cite{JansenKurt}

Let us now consider an example, taken from \cite{JansenKurt}. We will assume that the following mechanisms occur: 
$f^R$ occurs with rate $\frac{r_N}{N}$ for each ordered pair $(i,j), i,j\in \{1,...,N\},$
$f^{BA}$ with rate $\frac{b_N}{N},$
and assume $\frac{r_N}{N}\to\alpha\geq 0, b_N=b_N^a\to \beta\geq 0,$ as $N\to\infty.$  
Hence, the process $|X_t^N|, t\geq 0,$ makes the following transitions:
\begin{eqnarray}
k\to k+1 &\mbox{ at rate } &\frac{r_N+b_N}{N}k(N-k),\label{eq:ktok+1}\\
k\to k-1 &\mbox{ at rate } &\frac{r_N}{N}k(N-k)+\frac{b_N}{N}k(k-1),\label{eq:ktok-1}
\end{eqnarray}
This implies that the rescaled discrete process $\frac{|X_t^N|}{N}$ has discrete generator 
\begin{equation*}
\begin{split}
\tilde{G}_Nf\left(\frac{k}{N}\right)=&\frac{r_N}{N}k(N-k)\left(f\left(\frac{k+1}{N}\right)+f\left(\frac{k-1}{N}\right)-2f\left(\frac{k}{N}\right)\right)\\
&+\frac{b_N}{N}k(k-1)\left(f\left(\frac{k-1}{N}\right)-f\left(\frac{k}{N}\right)\right)+
\frac{b_N}{N}k(N-k)\left(f\left(\frac{k+1}{N}\right)-f\left(\frac{k}{N}\right)\right).
\end{split}
\end{equation*}
Assume now $\frac{k}{N}\to x$ as $N\to \infty.$ Then 
we see that $\tilde{G}_Nf(k/N)$converges to
$$\tilde{G}f(x)=\beta x(1-2x)f'(x)+\alpha x(1-x)f''(x),$$
which is the generator of the one-dimensional diffusion given by the SDE
$$dX_t=\beta X_t(1-2X_t)dt+\sqrt{2\alpha X_t(1-X_t)}dB_t.$$
This is a Wright-Fisher diffusion with local drift $\beta x(1-2x),$ which has the effect of pushing $X_t$ towards the values 0 and $1/2.$
Consider now the dual process. According to Lemma \ref{lem:ann_dual}, $(Y_t^N)$ where $f^A$ happens at rate $\frac{r_N}{N}, f^{BA}$ at $\frac{b_N}{N}$  is an annihilating dual of $(X_t^N).$ Its generator is
\begin{equation*}
\begin{split}G_Nf(k)=&\frac{b_N}{N}\cdot k(N-k)\left(f(k+1)-f(k)\right)+\frac{b_N}{N}\cdot k(k-1)\left(f(k-1)-f(k)\right)\\
&+\frac{a_N}{N}k(k-1)\left(f(k-2)-f(k)\right).
\end{split} \end{equation*}
As $N\to\infty,$ this converges to 
$$Gf(k):=\beta k\left(f(k+1)-f(k)\right)+\alpha k(k-1)\left(f(k-2)-f(k)\right),$$
which is the generator of a branching annihilating process on $\N.$ 

%

\paragraph{Acknowledgments} We thank Jochen Blath and Michael Scheutzow for interesting discussions, and anonymous referees for careful reading and useful remarks.  Both authors acknowledge support from the Hausdorff institute in Bonn. S. J. thanks  DFG Forschergruppe 718 ``Analysis and Stochastics in Complex Physical Systems'' and ERC Advanced Grant 267356 VARIS of Frank den Hollander for financial support. N.K. was supported by the DFG project BL 1105/2-1 and the DFG priority program 1590 ``Probabilistic structures in evolution''.

\subsubsection*{Addresses}

Sabine Jansen\\
Fakult{\"a}t f{\"u}r Mathematik\\
Ruhr-Universit{\"a}t Bochum\\
44780 Bochum, Germany\\
E-Mail: sabine.jansen@ruhr-uni-bochum.de\\

\noindent
Noemi Kurt\\
Institute of Mathematics, TU Berlin, Sekr. MA 7-5\\
Stra\ss e des 17. Juni 136\\
10963 Berlin, Germany.\\
E-Mail: kurt@math.tu-berlin.de.

\end{document}